\definecolor{dblue}{rgb}{0,0,0.70}
\newtheorem{theorem}
{Theorem}[section]
\newtheorem*{theorem*}{Theorem}
\newaliascnt{lemma}{theorem}
\newtheorem{lemma}[lemma]{Lemma}
\newtheorem*{lemma*}{Lemma}
\newtheorem{claim}[theorem]{Claim}
\newaliascnt{fact}{theorem}
\newtheorem{fact}[fact]{Fact}
\newaliascnt{proposition}{theorem}
\newtheorem{proposition}[proposition]{Proposition}
\newtheorem*{proposition*}{Proposition}
\newaliascnt{corollary}{theorem}
\newtheorem{corollary}[corollary]{Corollary}
\theoremstyle{remark}
\newaliascnt{remark}{theorem}
\newtheorem{remark}[remark]{Remark}
\newaliascnt{question}{theorem}
\newtheorem{question}[question]{Question}
\newaliascnt{conjecture}{theorem}
\newtheorem{conjecture}[conjecture]{Conjecture}
\newtheorem*{question*}{Question}
\newaliascnt{definition}{theorem}
\newtheorem{definition}[definition]{Definition}
\newaliascnt{example}{theorem}
\renewcommand{\restriction}{\mathbin\upharpoonright}
\newcommand{\axiom}[1]{\mathsf{#1}}
\newcommand{\ZFC}{\axiom{ZFC}}
\newcommand{\AC}{\axiom{AC}}
\newcommand{\AD}{\axiom{AD}}
\newcommand{\DC}{\axiom{DC}}
\newcommand{\ZF}{\axiom{ZF}}
\newcommand{\BPI}{\axiom{BPI}}
\newcommand{\Ord}{\mathrm{Ord}}
\newcommand{\HOD}{\mathrm{HOD}}
\newcommand{\GCH}{\axiom{GCH}}
\newcommand{\IS}{\axiom{IS}}
\newcommand{\HS}{\axiom{HS}}
\newcommand{\KWP}{\axiom{KWP}}
\newcommand{\SVC}{\axiom{SVC}}
\newcommand{\WO}{\axiom{WO}}
\DeclareMathOperator{\cf}{cf}
\DeclareMathOperator{\dom}{dom}
\DeclareMathOperator{\supp}{supp}
\DeclareMathOperator{\rank}{rank}
\DeclareMathOperator{\sym}{sym}
\DeclareMathOperator{\fix}{fix}
\DeclareMathOperator{\id}{id}
\DeclareMathOperator{\aut}{Aut}
\DeclareMathOperator{\Add}{Add}
\DeclareMathOperator{\tcl}{tcl}
\DeclareMathOperator{\SC}{SC}
\newcommand{\forces}{\mathrel{\Vdash}}
\newcommand{\nforces}{\mathrel{\not{\Vdash}}}
\newcommand{\nto}{\mathrel{\nrightarrow}}
\newcommand{\gaut}[1]{{\textstyle\int_{#1}}}
\newcommand{\power}{\mathcal{P}}
\newcommand{\PP}{\mathbb P}
\newcommand{\QQ}{\mathbb Q}
\newcommand{\RR}{\mathbb R}
\newcommand{\cA}{\mathcal A}
\newcommand{\cB}{\mathcal B}
\newcommand{\cF}{\mathcal F}
\newcommand{\cG}{\mathcal G}
\newcommand{\sF}{\mathscr F}
\newcommand{\sG}{\mathscr G}
\newcommand{\sH}{\mathscr H}
\newcommand{\sK}{\mathscr K}
\newcommand{\1}{\mathds 1}
\newcommand{\da}{{\downarrow}}
\newcommand{\tupp}[1]{\left\langle#1\right\rangle}
\newcommand{\tup}[1]{\langle#1\rangle}
\newcommand{\middd}{\mathrel{}\middle|\mathrel{}}
\author{Asaf Karagila}
\thanks{The author was supported by the Royal Society grant no.~NF170989.}
\email{karagila@math.huji.ac.il}
\urladdr{https://karagila.org}
\address{School of Mathematics,
  University of Leeds,
  Leeds, LS2~9JT, UK
}
\date{25 October, 2025}
\subjclass[2020]{Primary 03E25; Secondary 03E35}
\keywords{axiom of choice, symmetric extensions, iterations of symmetric extensions, Bristol model, generic multiverse, symmetric multiverse, Kinna--Wagner Principles}
\title[Approaching a Bristol model]{Approaching a Bristol model$^{(*)}$}
\thanks{(*) A previous version of this text titled ``Guide to the Bristol model: Gazing into the Abyss'' appeared in the RIMS K\^{o}ky\^{u}roku No.2164 (proceedings of the 2019 meeting ``Set theory and Infinity'').}
\begin{document}
\begin{abstract}
The Bristol model is an inner model of $L[c]$, where $c$ is a Cohen real, which is not constructible from a set. The idea was developed in 2011 in a workshop taking place in Bristol, but was only written in detail by the author in \cite{Karagila:Bristol}. This paper is a guide for those who want to get a broader view of the construction. We try to provide more intuition that might serve as a jumping board for those interested in this construction and in odd models of $\ZF$. We also correct a few minor issues in the original paper, as well as prove new results. For example, that the Boolean Prime Ideal theorem fails in the Bristol model, as some sets cannot be linearly ordered, and the ground model is always definable in its Bristol extensions. In addition to this we include a discussion on Kinna--Wagner Principles, which we think may play an important role in understanding the generic multiverse in $\ZF$.
\end{abstract}
\maketitle

\section{Introduction}\label{sect:introduction}
Mathematicians love classifications. We enjoy classifying objects into different categories, and for a good reason. Classifications teach us about abstract properties and help us deepen our understanding of various objects and theories.

Set theorists are generally interested in models of set theory. If $V$ satisfies $\ZFC$, we want to classify models of set theory which lie between $V$ and some generic extension,\footnote{The term ``generic'' will always mean ``set-generic''.} $V[G]$. In the case where ``set theory'' is understood as $\ZFC$, Vop\v{e}nka's theorem tells us exactly what the intermediate models are: they are generic extensions given by subforcings of the forcing which is used to introduce $G$ over $V$.

On the other hand, when we are interested in classifying arbitrary intermediate models of $\ZF$, instead, even if we assume that $V$ satisfied $\ZFC$, the task becomes significantly harder, and dare we say, nigh impossible. For a start, a generic extension of a model of $\ZFC$ cannot be a model of $\ZF+\lnot\AC$. One might be inclined to say that such intermediate extension would still be a \textit{symmetric extension}, which is a type of inner model of a generic extension defined using automorphisms of the forcing. While this is true under some additional conditions on the intermediate model, it turns out that if $M$ is an intermediate model between $V$ and $V[G]$, even if $M$ is a symmetric extension of $V$, it might not be given by any forcing even remotely related to the one for which $G$ was generic.

The reality is that intermediate models of $\ZF$ are far wilder than their $\ZFC$-counterparts. The Bristol model is the first explicit example of such a model. This is a model intermediate to $L[c]$, where $c$ is an $L$-generic Cohen real, which is not $L(x)$ for any set $x$, let alone a symmetric extension of $L$ (by any means, not just the Cohen forcing). While there is a semi-canonical Bristol model, modulo a particular choice of $c$, it is immediate from the construction that, in a very good sense of the word, most models intermediate to $L[c]$ are not even definable. We will clarify this in \autoref{sect:gaps}.

The idea for this model came about in a small 2011 workshop in Bristol on topics related to the HOD Conjecture. In attendance were Andrew Brooke-Taylor, James Cummings, Moti Gitik, Menachem Magidor, Ralf Schindler, Matteo Viale, Philip Welch, and W.~Hugh Woodin, henceforth ``the Bristol group''. The details were not written down in full, and the model remained as a folklore rumour until the author's effort to formalise it. The details of the construction are given in \cite{Karagila:Bristol}, which was part of the author's Ph.D.\ dissertation, written under the supervision of Menachem Magidor. This paper aims to give a bird's eye view of the construction, from three different perspectives (for people coming from different walks of set theory). We will also correct a few minor mistakes in the original paper, and prove a handful of new theorems about the Bristol model, and about models of $\ZF$ in general.

\subsection{Structure of this Paper}\label{sect:structure-this-paper}
The Bristol model is presented in \cite{Karagila:Bristol} as an iteration of symmetric extensions, starting from a Cohen real. The idea is to have, at successor steps, a ``decoding mechanism'' which is a symmetric extension over an intermediate step such that two properties hold: (1) the decoding mechanism has a generic (relative to the intermediate step) in the Cohen extension, and (2) the decoding mechanism only adds subsets of sufficiently high rank.

We will cover the basics of the technical tools in \autoref{sect:tools}. We will define symmetric extensions, and briefly outline the main ideas related to iterating them (or rather, why it is hard to iterate symmetric extensions). We will also discuss the combinatorial ideas needed for the decoding mechanism, both at successors of limits, as well as double successors.

After covering the preliminary tools, we will present the decoding mechanism, and the generic argument needed for the proof to work. In \autoref{sect:cerberus} we explain the three different approaches to constructing the Bristol model. All three are equivalent, but for different people some of these might be seen as ``more natural'' and can help understand the model better. We will not dive into the intimate details, though. The goal of this paper is to serve as a companion, and help provide not only the big picture of the construction, but also serve as a first step towards reading and understanding the construction's details presented in \cite{Karagila:Bristol}.

Having discussed the construction of the model, we will then point towards some minor gaps and typos in the original \cite{Karagila:Bristol}. Then we will discuss Kinna--Wagner Principles which we expect to play a role in the study of choiceless models such as the Bristol model. We will make some new observations, and suggest conjectures for future research. Finally, in \autoref{sect:choice} we will prove that some sets in the Bristol model cannot be linearly ordered, and therefore the Boolean Prime Ideal theorem is false there. We finish the paper with a long list of open questions related to the Bristol model.

\subsection*{Acknowledgements}
The author would like to express his deepest gratitude to Daisuke Ikegami for providing the opportunity to give a long tutorial on the Bristol model during the RIMS Set Theory Workshop in November 2019, ``Set Theory and Infinity'', as well as to the audience, who sat and listened, asked questions, and pointed out difficulties, all of which contributed to this paper. We also want to thank David Asper\'o and Andr\'es E.~Caicedo for providing thorough comments on early versions of this manuscript, as well as the anonymous referee for their helpful suggestions in improving this manuscript.
\newpage

\section{Preliminaries: symmetric extensions, their iterations, and more}\label{sect:tools}
In this paper, the term \textit{forcing} will denote a preordered set with a maximum, denoted by $\1$, unless explicitly mentioned otherwise.\footnote{We will, eventually, do a bit of class forcing.} Of course, we will invariably think about a forcing as a partially ordered set, a separative one, in fact, knowing full well that this will not limit our generality.\footnote{We still insist on the preorder definition, as it does make the definition of an iteration significantly more manageable.} The elements of $\PP$ are called \textit{conditions}, and using them we define \textit{$\PP$-names}. We refer the reader to any of \cite{Halbeisen,Jech,Kunen} for the basic methodology of forcing.

Let $\PP$ be a notion of forcing, we follow the convention that if $p,q\in\PP$, then $q\leq p$ indicates that $q$ is a \textit{stronger} condition, and we will often say that $q$ \textit{extends} $p$. Two conditions are \textit{compatible} if they have a common extension, and they are \textit{incompatible} otherwise.

Given a collection of $\PP$-names, $\{\dot x_i\mid i\in I\}$, that we want to transform into a name, we will denote by $\{\dot x_i\mid i\in I\}^\bullet$ the name $\{\tup{\1,\dot x_i}\mid i\in I\}$, and we say that a name is a \textit{$\bullet$-name} when it has this form. This extends naturally to ordered pairs, sequences, functions, etc. With this notation we can easily define the canonical names for ground model sets: $\check x=\{\check y\mid y\in x\}^\bullet$.

Given two $\PP$-names, $\dot x$ and $\dot y$, we say that $\dot x$ \textit{appears} in $\dot y$ if there is some $p\in\PP$ such that $\tup{p,\dot x}\in\dot y$. We will use a similar terminology stating that $p$ appears in $\dot y$.
\subsection{Symmetric extensions}
As we remarked, a generic extension of a model of $\ZFC$ is again a model of $\ZFC$. Symmetric extensions are intermediate models to generic extensions where the axiom of choice may fail.

Let $\PP$ be a forcing, and let $\pi$ be an automorphism of $\PP$. The action of $\pi$ extends to the $\PP$-names by recursion: \[\pi\dot x=\{\tup{\pi p,\pi\dot y}\mid\tup{p,\dot y}\in\dot x\}.\]

\begin{lemma*}[The Symmetry Lemma]\label{lemma:sym-lemma}
Let $\pi$ be an automorphism of a forcing $\PP$, and let $\dot x$ be a $\PP$-name. For every condition $p$, \[\pushQED{\qed}p\forces\varphi(\dot x)\iff\pi p\forces\varphi(\pi\dot x).\qedhere\popQED\]
\end{lemma*}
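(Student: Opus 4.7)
The plan is to proceed by induction on the complexity of $\varphi$, with the atomic cases $\dot x = \dot y$ and $\dot x\in\dot y$ handled by a nested induction on the ranks of the names. Two preparatory observations suffice. First, since $\pi^{-1}$ is also an automorphism of $\PP$, the extended action $\dot x\mapsto\pi\dot x$ is a bijection of the class of $\PP$-names, with inverse $\dot y\mapsto\pi^{-1}\dot y$. Second, since $\pi$ preserves the order of $\PP$, it restricts to a bijection between $\{q\mid q\leq p\}$ and $\{q'\mid q'\leq\pi p\}$, and it preserves compatibility, incompatibility, and density.

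For the atomic cases I would unfold the recursive definition of $p\forces\dot x\in\dot y$ and $p\forces\dot x=\dot y$ in terms of denseness below $p$ of a set defined using names of lower rank that appear in $\dot x$ and $\dot y$. Each ingredient transfers through $\pi$ by the preparatory observations and the inner induction hypothesis, yielding the symmetric statement below $\pi p$ for the shifted names. The propositional cases $\lnot\varphi$ and $\varphi\wedge\psi$ then follow from the bijection on extensions of $p$, and the quantifier case $\exists x\,\varphi$ follows from the combination of $\pi$ being a bijection on $\PP$-names and preserving density.

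A perhaps cleaner route is to argue semantically. If $G$ is $V$-generic with $p\in G$, then $\pi[G]=\{\pi q\mid q\in G\}$ is $V$-generic (since $\pi$ sends maximal antichains to maximal antichains) and contains $\pi p$; moreover $V[G]=V[\pi[G]]$, as each filter is computable from the other using $\pi\in V$. A short induction on the rank of names shows that $(\pi\dot x)^{\pi[G]}=\dot x^G$, so $\varphi(\dot x^G)$ holds in $V[G]$ if and only if $\varphi((\pi\dot x)^{\pi[G]})$ holds in $V[\pi[G]]$---literally the same assertion. The desired equivalence then follows from the forcing theorem, since $G\mapsto\pi[G]$ is a bijection between the $V$-generics containing $p$ and those containing $\pi p$.

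I do not foresee any deep obstacle; the proof is essentially bookkeeping once a definition of the forcing relation is fixed. The one subtlety worth flagging in the syntactic approach is matching the recursion on ranks with the action of $\pi$ on the pairs $\tup{q,\dot z}$ that appear in a name, which is routine but easy to mis-state; the semantic approach sidesteps this at the cost of invoking the forcing theorem.
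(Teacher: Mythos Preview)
Your proposal is correct; both the syntactic induction on formulas and the semantic argument via $G\mapsto\pi[G]$ are standard and sound. The paper itself does not prove the Symmetry Lemma at all---it is stated with a \textsf{qed} box as a well-known fact and the reader is implicitly referred to standard references such as Jech or Halbeisen---so your write-up actually supplies more detail than the paper does. If anything, the semantic route you sketch is the cleaner of the two for a self-contained exposition, and your remark about the bookkeeping subtlety in the syntactic approach is apt.
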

Fix a group $\sG\leq\aut(\PP)$. We say that $\sF$ is a \textit{filter of subgroups on $\sG$} if it is a filter on the lattice of subgroups, namely, it is a non-empty collection of subgroups which is closed under finite intersections and supergroups. We will, unless stated otherwise, assume it is a proper filter, i.e.\ the trivial group is not in $\sF$.\footnote{There is no point in using the improper filter when taking a symmetric extension. However, for the sake of generality it should be noted that this can be useful when iterating. We promise to never bring this up in the course of this paper again.} Finally, $\sF$ is \textit{normal} if whenever $\pi\in\sG$ and $H\in\sF$, then $\pi H\pi^{-1}\in\sF$ as well. In most cases we are interested not necessarily in a filter, but in a filter base, and we will ignore the distinction between the two.

Call $\tup{\PP,\sG,\sF}$ a \textit{symmetric system} if $\PP$ is a notion of forcing, $\sG$ is a group of automorphisms of $\PP$, and $\sF$ is a normal filter of subgroups on $\sG$. We shall fix a symmetric system $\tup{\PP,\sG,\sF}$ for the rest of this subsection.

For a $\PP$-name, $\dot x$, let $\sym_\sG(\dot x)$ denote the group $\{\pi\in\sG\mid\pi\dot x=\dot x\}$. If it is the case that $\sym_\sG(\dot x)\in\sF$, then we say that $\dot x$ is \textit{$\sF$-symmetric}. And similarly, we say that $\dot x$ is \textit{hereditarily $\sF$-symmetric} if being $\sF$-symmetric is hereditarily true for all names hereditarily appearing in $\{\dot x\}^\bullet$.

We denote by $\HS_\sF$ the class of hereditarily $\sF$-symmetric names. We denote by $\forces^\HS$ the relativisation of the forcing relation to $\HS$: we restrict the quantifiers and free variables to this class. It is not hard to check that the Symmetry Lemma applies for $\forces^\HS$, provided that we use automorphisms from $\sG$. The following theorem is a condensation of \cite[pp.~253--254]{Jech}.
\begin{theorem*}
Let $G\subseteq\PP$ be a $V$-generic filter, and let $M$ denote the interpreted class $\HS_\sF^G=\{\dot x^G\mid\dot x\in\HS_\sF\}$. Then $M$ is a transitive class model of $\ZF$ such that $V\subseteq M\subseteq V[G]$. Moreover, $M\models\varphi(\dot x^G)$ if and only if there is some $p\in G$ such that $p\forces^\HS\varphi(\dot x)$.\qed
\end{theorem*}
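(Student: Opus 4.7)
The plan is to prove the inclusions, transitivity, $\ZF$, and the forcing theorem together, mirroring the standard Cohen-style development but with quantifiers restricted to $\HS_\sF$. The inclusion $M\subseteq V[G]$ is immediate since $\HS_\sF$-names are $\PP$-names. For $V\subseteq M$ I would show by $\in$-induction that $\check x\in\HS_\sF$: as a $\bullet$-name, $\pi\check x=\check x$ for every $\pi\in\sG$ (using $\pi\check y=\check y$ from the inductive hypothesis), hence $\sym_\sG(\check x)=\sG\in\sF$, and hereditary symmetry is inherited. Transitivity of $M$ follows directly from the definition of hereditary symmetry: if $\dot y$ appears in $\dot x\in\HS_\sF$, then $\dot y\in\HS_\sF$, so $\dot y^G\in M$.

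For the axioms, Extensionality and Foundation transfer from $V[G]$, while Pairing, Union, and Infinity are handled by the natural $\bullet$-names: for instance, $\sym_\sG(\{\dot x,\dot y\}^\bullet)\supseteq\sym_\sG(\dot x)\cap\sym_\sG(\dot y)\in\sF$ by closure under finite intersections. The substantive axioms --- Separation, Collection, and Power Set --- require the forcing theorem for $\forces^{\HS}$, so I would establish that first. Define $\forces^{\HS}$ by the standard recursion with all quantifiers restricted to $\HS_\sF$. Two key facts drop out: first, the Symmetry Lemma still holds for $\forces^{\HS}$ when $\pi\in\sG$, because normality of $\sF$ guarantees that $\sG$ acts on $\HS_\sF$ (using $\sym_\sG(\pi\dot x)=\pi\sym_\sG(\dot x)\pi^{-1}$, applied hereditarily); second, the Truth Lemma $M\models\varphi(\dot x^G)\iff\exists p\in G\colon p\forces^{\HS}\varphi(\dot x)$ follows by induction on $\varphi$ as in the unrestricted case, since existential witnesses are drawn from $\HS_\sF$ by construction.

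With the forcing theorem in hand, Separation is handled by the name $\dot z=\{\tup{p,\dot y}:\dot y\text{ appears in }\dot x,\ p\forces^{\HS}\dot y\in\dot x\wedge\varphi(\dot y,\dot a_1,\ldots,\dot a_n)\}$, whose symmetry group contains $\sym_\sG(\dot x)\cap\bigcap_i\sym_\sG(\dot a_i)\in\sF$ by the Symmetry Lemma. The hard part, and the main obstacle, is Collection (which yields Replacement) together with Power Set; both are treated by the standard reflection trick. For Collection: given $\dot x\in\HS_\sF$ with $\HS_\sF$ parameters and a formula satisfying the hypothesis in $M$, reflect in $V$ to pick an ordinal $\alpha$ large enough that every $\forces^{\HS}$-forced existential assertion with name arguments appearing in $\dot x$ admits an $\HS_\sF$-witness of rank $<\alpha$; then the $\bullet$-name $\{\dot w:\dot w\in\HS_\sF,\ \rank(\dot w)<\alpha\}^\bullet$ is $\sG$-invariant, hence hereditarily in $\HS_\sF$, and collects enough witnesses by the Truth Lemma. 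Power Set is handled similarly: every $M$-subset of $\dot x^G$ has a canonical $\HS_\sF$-name built from subnames of $\dot x$ and conditions of $\PP$, with rank uniformly bounded by some $\alpha$, so the $\bullet$-name of all such $\HS_\sF$-names of rank $<\alpha$ lies in $\HS_\sF$ and interprets as $\power(\dot x^G)\cap M$.
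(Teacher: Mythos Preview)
The paper does not prove this theorem at all: it is stated with a trailing \qed and no accompanying argument, as a standard background result about symmetric extensions (the reader is referred to \cite{Halbeisen,Jech,Kunen} for forcing basics, and this theorem is the classical one going back to Jech and Cohen). Your sketch is essentially the textbook proof---transitivity and the inclusions are immediate from the definitions, the easy axioms come from $\bullet$-names, and Separation, Collection, and Power Set follow once one has the relativised Symmetry and Truth Lemmas for $\forces^{\HS}$, with normality of $\sF$ used exactly where you say, to keep $\HS_\sF$ closed under the $\sG$-action. So there is nothing to compare: your proposal supplies a correct proof where the paper deliberately omits one.
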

The class $M$ is also called a \textit{symmetric extension} of $V$. It turns out, as shown by Usuba \cite{Usuba:LSP}, that $M$ is a symmetric extension of $V$ if and only if $M=V(x)$ for some $x\in V[G]$. We will discuss this in more detail in \autoref{sect:gaps}.

We will omit $\sG$ and $\sF$ from the notation and terminology when they are clear from context, which is usually what is going to happen.
\subsection{Example}
Let $\PP$ be $\Add(\omega,\omega_1)$. Namely, $p\in\PP$ is a finite partial function from $\omega_1\times\omega\to2$. We let our $\sG$ be the group of permutations of $\omega_1$ acting on $\PP$ in the natural way: $\pi p(\pi\alpha,n)=p(\alpha,n)$. Finally, for $E\subseteq\omega_1$, let $\fix(E)$ denote $\{\pi\in\sG\mid\pi\restriction E=\id\}$, and set $\sF=\{\fix(E)\mid E\in[\omega_1]^{<\omega_1}\}$.\footnote{The keen-eyed reader will notice that this is a filter base, not a filter.}

For every $\alpha<\omega_1$, let $\dot a_\alpha$ be the name of the $\alpha$th Cohen real, $\{\tup{p,\check n}\mid p(\alpha,n)=1\}$, and set $\dot A=\{\dot a_\alpha\mid\alpha<\omega_1\}^\bullet$.
\begin{claim}
For every $\pi\in\sG$ and $\alpha<\omega_1$, $\pi\dot a_\alpha=\dot a_{\pi\alpha}$. Consequently, $\pi\dot A=\dot A$.\qed
\end{claim}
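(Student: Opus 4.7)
The plan is to unfold the recursive definition of the action of $\pi$ on names and read off the conclusion directly. Starting from $\dot a_\alpha = \{\tup{p,\check n}\mid p(\alpha,n)=1\}$, the recursive rule gives
\[\pi\dot a_\alpha = \{\tup{\pi p,\pi\check n}\mid p(\alpha,n)=1\}.\]
Since $\check n$ is a $\bullet$-name built from the natural number $n$ and $\pi$ fixes every ground-model ordinal by an easy induction on rank, we have $\pi\check n=\check n$. So it suffices to show that the set of conditions $\{\pi p\mid p(\alpha,n)=1\}$ is exactly $\{q\mid q(\pi\alpha,n)=1\}$.

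For this I would just unpack the definition of the action of $\sG$ on $\PP$. By definition, $(\pi p)(\pi\alpha,n)=p(\alpha,n)$, so $p(\alpha,n)=1$ if and only if $(\pi p)(\pi\alpha,n)=1$. Hence the map $p\mapsto\pi p$ is a bijection (with inverse $q\mapsto\pi^{-1}q$) between $\{p\mid p(\alpha,n)=1\}$ and $\{q\mid q(\pi\alpha,n)=1\}$, giving
\[\pi\dot a_\alpha = \{\tup{q,\check n}\mid q(\pi\alpha,n)=1\} = \dot a_{\pi\alpha}.\]

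For the second assertion, I would apply the action to the $\bullet$-name $\dot A=\{\tup{\1,\dot a_\alpha}\mid\alpha<\omega_1\}$. Since $\pi$ is an automorphism it sends $\1$ to $\1$, so
\[\pi\dot A = \{\tup{\1,\pi\dot a_\alpha}\mid\alpha<\omega_1\} = \{\tup{\1,\dot a_{\pi\alpha}}\mid\alpha<\omega_1\},\]
and because $\pi$ permutes $\omega_1$, reindexing $\beta=\pi\alpha$ recovers $\dot A$ exactly.

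There is no real obstacle here; the only thing one needs to be slightly careful about is that the action of $\sG$ on $\PP$ is defined so that $\pi$ shifts the \emph{domain coordinate}, i.e.\ $(\pi p)(\pi\alpha,n)=p(\alpha,n)$, and not the other way around. Once that convention is fixed, both equalities reduce to an immediate reindexing argument, and the fact that the canonical names $\check n$ and the maximum $\1$ are fixed by every automorphism.
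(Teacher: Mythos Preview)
Your proof is correct and is exactly the routine verification the paper has in mind: the claim is stated with an immediate \qed\ in the paper, i.e.\ left as a direct unfolding of the definitions, and your argument carries out that unfolding carefully and accurately.
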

As an immediate corollary, $\dot a_\alpha\in\HS$ for each $\alpha<\omega_1$, as witnessed by $\fix(\{\alpha\})$, and so $\dot A\in\HS$ as well.
\begin{theorem}\label{thm:example-ccc}
$\1\forces^\HS\dot A$ cannot be well-ordered. Consequently, the real numbers cannot be well-ordered, and therefore $\1\forces^\HS\lnot\AC$.
\end{theorem}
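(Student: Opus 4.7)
The plan is the standard symmetric argument. Suppose for contradiction that some $p_0 \in \PP$ and some $\dot f \in \HS$ satisfy $p_0 \forces^\HS \text{``}\dot f \text{ is a bijection } \check\kappa \to \dot A\text{''}$ for some ordinal $\kappa$. Since $\dot f \in \HS$, fix a countable $E \subseteq \omega_1$ with $\fix(E) \subseteq \sym(\dot f)$. Pick any $\alpha \in \omega_1 \setminus E$; since $p_0$ forces $\dot a_\alpha$ to appear in the range of $\dot f$, there exist $q \leq p_0$ and $\xi < \kappa$ with $q \forces \dot f(\check\xi) = \dot a_\alpha$.

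The heart of the argument is to swap $\alpha$ with a fresh ordinal. Let $S := \{\gamma < \omega_1 \mid \exists n\, (\gamma,n) \in \dom q\}$, which is finite since $q$ is a finite partial function. Choose $\beta \in \omega_1 \setminus (E \cup S \cup \{\alpha\})$, which exists because $E \cup S$ is countable. Let $\pi \in \sG$ be the transposition swapping $\alpha$ and $\beta$. Then $\pi \in \fix(E)$, so $\pi \dot f = \dot f$, and by the earlier claim $\pi \dot a_\alpha = \dot a_\beta$. Also $\pi \check\xi = \check\xi$ since $\check\xi$ is a $\bullet$-name built from ground-model ordinals. Applying the Symmetry Lemma to $q \forces \dot f(\check\xi) = \dot a_\alpha$ gives
\[
\pi q \forces \dot f(\check\xi) = \dot a_\beta.
\]

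The key step is to verify that $q$ and $\pi q$ are compatible. Since $\beta \notin S$, the image $\pi(S)$ equals $S$ if $\alpha \notin S$, and otherwise $\pi(S) = (S \setminus \{\alpha\}) \cup \{\beta\}$; in either case $\pi$ restricts to the identity on $S \cap \pi(S)$. Hence $q$ and $\pi q$ agree on their common domain (which is contained in $(S \cap \pi(S)) \times \omega$), so $q \cup \pi q$ is a legitimate condition extending both. This common extension then forces $\dot a_\alpha = \dot f(\check\xi) = \dot a_\beta$, which contradicts the fact that the generic filter decides distinct Cohen reals at distinct coordinates (already any condition below $q \cup \pi q$ can be extended to separate the values of $\dot a_\alpha$ and $\dot a_\beta$ at some coordinate not yet in its domain).

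This contradiction shows $\1 \forces^\HS \text{``}\dot A$ is not well-orderable.'' The ``consequently'' clauses are then immediate: $\dot A$ is forced to be a set of reals, so the set of reals inherits the non-well-orderability (a well-ordering of $\RR$ would induce one on $\dot A$), and the failure of $\AC$ follows at once. The main obstacle is really only bookkeeping: choosing $\beta$ so that it lies outside the supports of both $\dot f$ (controlled by $E$) and $q$ (controlled by $S$), so that $\pi$ simultaneously fixes $\dot f$ and keeps $\pi q$ compatible with $q$.
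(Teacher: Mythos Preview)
Your argument is correct and is exactly what the paper itself calls ``the standard argument'': extend to a condition $q$ deciding $\dot f(\check\xi)=\dot a_\alpha$, find $\pi\in\fix(E)$ with $\pi\alpha\neq\alpha$ and $\pi q$ compatible with $q$, and derive a contradiction. The paper, however, deliberately presents a different proof in order to advertise an alternative technique. Rather than chasing compatibility of $q$ and $\pi q$, the paper works with the original condition $p$ throughout: it uses the c.c.c.\ of $\PP$ to observe that the set $X=\{\xi\mid\exists q\leq p,\ q\forces^\HS\dot f(\dot a_\alpha)=\check\xi\}$ is countable, then applies the transposition $(\alpha\ \beta)$ (for $\beta\notin E$) directly to the statement $p\forces^\HS\dot f(\dot a_\alpha)\in\check X$ to conclude $p\forces^\HS\dot f(\dot a_\beta)\in\check X$ for every such $\beta$. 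This yields the stronger conclusion that every symmetric function $\dot f\colon\dot A\to\Ord$ has countable range, and then a second appeal to c.c.c.\ (preservation of $\omega_1$) finishes the argument. What your approach buys is self-containment: no chain-condition input is needed, only the combinatorics of finite supports. What the paper's approach buys is that one never has to extend $p$ or verify compatibility of conditions by hand; the c.c.c.\ absorbs that work, and the method generalises more readily to situations later in the paper where explicit compatibility checks become unwieldy.
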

\begin{proof}
Let $\dot f\in\HS$, and suppose that $p$ is a condition such that $p\forces^\HS\dot f\colon\dot A\to\check\eta$ for some ordinal $\eta$. Let $E$ be a countable set such that $\fix(E)\subseteq\sym(\dot f)$. We may also assume that $\pi\in\fix(E)$ satisfies $\pi p=p$ by adding a finite set to $E$, and replacing it with $E\cup\{\alpha\mid\exists n\, \tup{\alpha,n}\in\dom p\}$.

Fix $\alpha\notin E$, as $\PP$ is a c.c.c.\ forcing, the set $X=\{\xi<\eta\mid\exists q\leq p: q\forces^\HS\dot f(\dot a_\alpha)=\check\xi\}$ is countable. Note that $p\forces^\HS\dot f(\dot a_\alpha)\in\check X$.

For any $\beta<\omega_1$, let $\pi_\beta$ denote the $2$-cycle $(\alpha\ \beta)$. Therefore, \[\pi_\beta p\forces^\HS\pi_\beta\dot f(\pi_\beta\dot a_\alpha)\in\pi_\beta\check X.\] Easily, $\pi_\beta\in\fix(E)$ if and only if $\beta\notin E$. So for all $\beta\notin E$, $p\forces^\HS\dot f(\dot a_\beta)\in\check X$. In particular, $p$ must force that $\dot f$ has a countable range. As $p$ and $\dot f$ were arbitrary, we in fact have shown that every ordinal-valued function in the symmetric extension defined on $A$ must have a countable range.

To finish the proof we appeal to the c.c.c.\ of the Cohen forcing again, noting that $\omega_1$ is not collapsed, and therefore $\1\forces^\HS|\dot A|\neq\aleph_0$. Therefore there is no injection from $A$ into the ordinals, as wanted.
\end{proof}
The standard arguments are usually presented in a slightly different way. We usually extend $p$ to a condition $q$, which decides the value of $\dot f(\dot a_\alpha)$, and then show that we find $\pi\in\fix(E)$ such that $\pi q$ is compatible with $q$, and $\pi\alpha\neq\alpha$. This then shows that no extension of $p$ can force $\dot f$ to be injective. Chain condition based arguments are not very common in results of this type, and we hope that this paper will help to popularise the idea.\footnote{See \cite{Karagila:Krivine} for more examples of this sort.}
\subsection{Iterations of symmetric extensions}
Iterating symmetric extensions is not an easy task. While some ad-hoc constructions can be found in the literature from the very early 1970s,\footnote{Examples include \cite{Monro:1973},\cite{Sageev:1975},\cite{Sageev:1981}, and to some extent also \cite{Morris:Thesis}.} the only systematic development of such a framework was done by the author in \cite{Karagila:Iterations}, and so far only for finite support iterations. The goal of this section is not to fully develop and explain this technique, but instead provide an intuition as to how the technique works, and what are the difficulties that need to be overcome in the case of the Bristol model.

The intuition behind iterations of symmetric extensions is as naive and simple as it can get. We want to have an iteration of forcing notions, in this case with finite support, and we want to identify a class of names which correspond to the intermediate model that we would get if we were to iterate symmetric extensions one step at a time.

Looking at a two-step iteration, say $\PP\ast\dot\QQ$, we also need to associate $\sG$ and $\sF$ to $\PP$, and $\PP$-names for a group of automorphisms, $\dot\sH\leq\dot\aut(\dot\QQ)$, and a filter of subgroups $\dot\sK$ on $\dot\sH$. Now, a $\PP\ast\dot\QQ$-name is going to be in the iterated symmetric extension if its projection to $\PP$ is in $\HS_\sF$,\footnote{Recall that a $\PP\ast\dot\QQ$-name has a naturally defined ``$\PP$-name of a $\dot\QQ$-name''. Here we want this name to be in $\HS_\sF$.} and it is forced by $\1_\PP$ to be in $\HS_\sK$, that is a hereditarily symmetric name in the second step.

But we can weaken this slightly, and much like we only require that the projected $\PP$-name is forced to be a name in $\HS_\sK$, we can similarly require that the projected name is forced to be equal to some name in $\HS_\sF$. That we, we are allowed to ``mix'' names from $\HS_\sF$ over an antichain.\footnote{In general a class of names $X$ has the mixing property if when $1\forces\dot x\in X$, that is we can find a (pre-)dense set of conditions $p$ and $\dot x_p\in X$ such that $p\forces\dot x=\dot x_p$, then $\dot x\in X$.} So the projected name need only be ``generically equal'' to an $\sF$-symmetric $\PP$-name, which itself needs only be ``generically equal'' to a $\dot\sK$-symmetric $\dot\QQ$-name.

Consider, for example, Cohen's first model, this is a model similar to the example above, replacing $\omega_1$ by $\omega$. Namely, we add an $\omega$-sequence of Cohen reals, permute them, and consider finite stabilisers. Let $\dot a_n$ be the name for the $n$th real, then we can define the name \[\dot a=\{\tup{p,\check m}\mid\exists n(p(n,0)=1\land\forall k<n, p(k,0)=0\land p(n,m)=1)\}.\] In clearer terms, this is the name for the first $\dot a_n$ which contains $0$. We are guaranteed that $\dot a$ will be interpreted as one of the $\dot a_n$. But it is not hard to see that $\dot a$, as defined above, is not stabilised by fixing any finite subset of $\omega$.\footnote{This shows that typically $\HS$ does not have the mixing property. This is not a bad thing, though, as we are often concerned with particulars when working with symmetric extensions, and having to specify witnesses is a good thing.}

Considering names like $\dot a$ seems like an unnecessary complication. But upon a closer examination of the general construction of iterations, we see this idea is somehow necessary. Indeed, the conditions of the iteration are usually defined to be $\tup{p,\dot q}$ such that $p\in\PP$ and $\1_\PP\forces\dot q\in\dot\QQ$ with $\dot q$ coming from a ``sufficiently rich'' set of names. We will call iterations defined this way ``Jech(-style) iterations''. Kunen, in his book, first introduces iterations more naively: the conditions are pairs $\tup{p,\dot q}$ such that $p\in\PP$ and $p\forces\dot q\in\dot\QQ$ with $\dot q$ appearing in $\dot\QQ$. Unlike Jech iterations, this definition does not generalise well, but it is useful for understanding finite support iterations; we will refer to this as ``Kunen(-style) iterations''.

\begin{remark}
  It is worth pointing out at this point that we assume $\ZFC$ holds in our ground model. While the theory of symmetric extensions, as well as that of iterated forcing, can be developed reasonably well in $\ZF$, it is not clear to what extent choice is truly necessary for developing the theory of iterated symmetric extensions. We utilise the mixing property quite significantly in the general theory, and while it is conceivable, and indeed it is our conjecture, that we can remove choice from the assumptions, there is a certain comfort and simplicity in assuming it. Moreover, as we want to start with $L$ as our underlying model, $\ZFC$ is already a given. While $V=L$ is far too strong of an assumption, we will see that at the very least we will want $\GCH$ to hold, which implies choice anyway.
\end{remark}

When working with Jech iterations, we can quickly see that even for the conditions of $\PP\ast\dot\QQ$ to be in the intermediate model, we need to allow this so-called ``mixing property''. And so, if we require it to hold for the conditions, we will need to require it to also hold for the automorphisms of $\dot\QQ$, etc., and so the idea itself is important. We can now proceed towards finding a combinatorial definition that will allow us to directly define the class of names, much like we did with $\HS$ in the case of a single symmetric extension.

\begin{definition}\label{def:respect}
  Let $\PP$ be a notion of forcing and let $\pi$ be an automorphism of $\PP$. We say that $\pi$ \textit{respects} a name $\dot A$ if $\1_\PP\forces\pi\dot A=\dot A$. If $\tup{\PP,\sG,\sF}$ is a symmetric system we say that $\dot A$ is \textit{$\sF$-respected} if there is some $H\in\sF$ such that every $\pi\in H$ respects $\dot A$.
\end{definition}
Easily, every $\sF$-symmetric name is $\sF$-respected, and much like the definition of $\sF$-symmetric before it, this definition lends itself to a hereditary version. We will simply say ``respected'' when $\sF$ is clear from context. If $\dot A$ carries an implicit structure (e.g., a forcing notion) then this structure is also required to be respected.

The idea is that being respected is ``almost'' being symmetric. We will soon weaken this property a bit further to accommodate the ``mixing property'' into the respected names.

Respect is the foremost necessary condition for developing a combinatorial characterisation of iterated symmetric extensions. Given a symmetric system $\tup{\PP,\sG,\sF}$ and a $\PP$-name $\dot\QQ$, in order to define an automorphism of $\PP\ast\dot\QQ$ using some $\pi\in\sG$, the first thing we need to ensure is that $\pi$ respects $\dot\QQ$. Otherwise, $\tup{p,\dot q}\mapsto\tup{\pi p,\pi\dot q}$ is not an automorphism of $\PP\ast\dot\QQ$.

\begin{definition}\label{def:generic-aut}
  Let $\tup{\PP,\sG,\sF}$ be a symmetric system, and let $\tup{\dot\QQ,\dot\sH,\dot\sK}^\bullet$ be a hereditarily respected name for a symmetric system. Suppose that $\pi\in\sG$ and $\dot\sigma$ is a name such that $\1_\PP\forces\dot\sigma\in\dot\sH$, then we denote by $\gaut{\tup{\pi,\dot\sigma}}$ the automorphism defined by $\tup{p,\dot q}\mapsto\tup{\pi p,\pi(\dot\sigma\dot q)}$.
\end{definition}
Here we utilise the mixing property quite significantly, by defining $\dot\sigma\dot q$ to be the name guaranteed to be interpreted as the action of $\dot\sigma$ on the condition $\dot q$. If we were using Kunen iterations, we would have to define $\gaut{\tup{\pi,\dot\sigma}}$ as a partial automorphism which is only defined when $p\forces\dot\sigma\in\sG$. This is not a formal problem, but it makes the actual legwork a lot harder.

We will denote by $\sG\ast\dot\sH$ the group of all such automorphisms. In \cite{Karagila:Iterations} we refer to this group as the \textit{generic semidirect product}. Turning our attention to the filters $\sF$ and $\dot\sK$, we define a \textit{support} to be $\tup{\dot H_0,\dot H_1}$ where both of them are $\PP\ast\dot\QQ$-names such that $\forces\dot H_0\in\check\sF$ and $\dot H_1\in\dot\sK$. Note that we can always extend whatever condition to decide the actual group $\dot H_0$, and likewise we can decide the actual $\PP$-name for $\dot H_1$. However, using this approach allows us to take advantage of the mixing property.

We can now define the notion of respect relative to the supports. Namely, there is a pair $\tup{\dot H_0,\dot H_1}$ such that whenever $\tup{p,\dot q}\forces \tup{\check\pi,\dot\sigma}\in \tup{\dot H_0,\dot H_1}$, which is to say $\tup{p,\dot q}\forces\check\pi\in\dot H_0$ and $\dot\sigma\in\dot H_1$, we have that $\tup{p,\dot q}\forces\gaut{\tup{\pi,\dot\sigma}}\dot A=\dot A$. Whereas in \autoref{def:respect} we required that $\dot H_0$ would actually be a concrete group, here we allow a bit more leeway. If we consider this definition at a single-step forcing, we weakened the requirement that $H\in\sF$ to requiring that $\dot H$ is a name for a group in $\sF$. If we mix names from $\HS_\sF$, then the result will be respected, since we can mix the groups, $\sym(\dot x)$, to obtain a name $\dot H$ which is forced to be in $\sF$.

A \textit{symmetric iteration}, or an iteration of symmetric extensions, is defined by specifying a sequence of names for symmetric systems, $\tup{\dot\QQ_\beta,\dot\sG_\beta,\dot\sF_\beta\mid\beta<\alpha}$, and defining $\PP_\alpha$ as the finite support iteration of the forcings $\dot\QQ_\beta$; $\cG_\alpha$ as the finite support generic semidirect product of the groups $\dot\sG_\beta$, made of sequences $\tup{\dot\pi_\beta\mid\beta<\alpha}$ such that $\1_\beta\forces\dot\pi_\beta\in\dot\sG_\beta$ and for all but finitely many $\beta$, $\1_\beta\forces\dot\pi_\beta=\id^\bullet$; and $\cF_\alpha$ as the collection of all supports, defined as follows.

We define supports in the general case as $\PP_\alpha$-names for sequences $\tup{\dot H_\beta\mid\beta<\alpha}$ such that $\forces_\alpha\dot H_\beta\in\dot\sF_\beta$, and that $\forces_\alpha\{\check\beta\mid\dot H_\beta\neq\dot\sG_\beta\}^\bullet$ is finite. The last part is crucial, as it allows us the flexibility in ``knowing something has a finite definition'' while not committing to its specifics just yet.

Before we continue on, let us expand a bit on these definitions, with the caveat that the generality of the method is not needed for the Bristol model, as will be explained later, and so we encourage the interested reader to study the general framework for iterations as presented in \cite{Karagila:Iterations}. Given a sequence $\vec\pi$ in $\cG_\alpha$, the action on $\PP_\alpha$, denoted by $\gaut{\vec\pi}p$, will be defined by recursion. This is easy if there is exactly one $\beta<\alpha$ for which $\1_\beta\nforces\dot\pi_\beta=\id^\bullet$. In that case $\gaut{\vec\pi\restriction\beta}=\id$ and $\gaut{\dot\pi_\beta}$ is acting on $\QQ_\beta$ as well as on the name of the quotient $\PP_\alpha/\QQ_{\beta+1}$. And so, \[\gaut{\vec\pi}p=p\restriction\beta^\smallfrown\dot\pi_\beta(p(\beta))^\smallfrown\dot\pi_\beta(p\restriction(\beta,\alpha)).\]
Letting $C(\vec\pi)=\{\beta<\alpha\mid\1_\beta\nforces\dot\pi_\beta=\id^\bullet\}=\{\beta_0<\dots<\beta_n\}$, then $\gaut{\vec\pi}$ is simply the composition $\gaut{\dot\pi_{\beta_0}}\dots\gaut{\dot\pi_{\beta_n}}$.\footnote{The action can be down ``upwards'' rather than ``downwards''.} Looking at a support, $\vec H=\tup{\dot H_\beta\mid\beta<\alpha}$, these do not quite define a subgroup of $\cG_\alpha$, however, for each $p\in\PP_\alpha$ we can define a ``local group'' which is generated by $\{\gaut{\vec\pi}\mid\forall\beta<\alpha, p\forces\dot\beta_\beta\in\dot H_\beta\}$.\footnote{In an upcoming work with Jonathan Schilhan we simplify much of this framework, including the concept of supports.} This leads us to the definition of respect in this case. Namely, $\dot x$ is $\cF$-respected if there is a support $\vec H$ such that whenever $p\forces\vec\pi\in\vec H$, then $p\forces\gaut{\vec\pi}\dot x=\dot x$.

Let $\IS_\alpha$ be the class of $\PP_\alpha$-names which are hereditarily respected. This class is now the class of names which will be interpreted in the intermediate model of stage $\alpha$. So to complete our definition of a symmetric iteration we need to require that $\tup{\dot\QQ_\alpha,\dot\sG_\alpha,\dot\sF_\alpha}^\bullet$ is in $\IS_\alpha$, and indeed that it is respected by all automorphisms in $\cG_\alpha$, which guarantees that the action of $\cG_\alpha$ extends to $\PP_{\alpha+1}$. We also have a forcing relation $\forces^\IS_\alpha$ which is defined by relativising the names and quantifiers to $\IS_\alpha$.

Using this definition we can show that when $G$ is $V$-generic, then $\IS_\alpha^G$ is a model of $\ZF$, and if $\alpha=\beta+1$, then this model is a symmetric extension of $\IS_\beta^{G\restriction\beta}$ using the $\beta$th symmetric system. On the other hand, if we want to continue a symmetric iteration, that is of course possible, but we need to make sure that the generic filter used is not only $\IS_\alpha^G$-generic, but rather $V[G]$-generic, and we may need to shrink the groups $\dot\sG_\beta$ in a few places to satisfy the requirement that each symmetric system is respected by the relevant $\cG_\beta$. This may present an issue if we want to continue our iteration by infinitely many steps, as we may need to shrink our groups infinitely many times, which might not be possible, as the filters of groups might not be sufficiently closed.

Here we arrive to our first obstacle when applying this definition to the Bristol model. We required the generic filter is $V$-generic for the entire iteration, whereas the Bristol model is defined inside a single Cohen real, so we need to find a way to modify the definition to allow for ``pointwise genericity'', so that we can construct the Bristol model one step at a time, while finding ``sufficiently generic'' objects inside $L[c]$. In the case of iterated forcing using pointwise genericity is not an issue for successor steps,\footnote{Although it can be an issue for the limit case.} but here we run into the first problem we had defining the whole apparatus using mixing. The use of mixing allows us to use arbitrary antichains and predense sets to define the names in $\IS_\alpha$, and so we end up with names in $\IS_\alpha$ which encode some generic information in them. This means that pointwise generic filters might not be able to correctly interpret such names, since they might not actually meet the relevant antichains.

The ``easy'' way to leave this mess is to require that we relativise the definition pointwise. That is, at each step we only take names which are in the intermediate model. But this adds a layer of complexity when defining the actual forcing $\PP_\alpha$, or the automorphisms in $\cG_\alpha$, or using these in the same manner that we are used to when working with symmetric extensions. Even worse, while for finite support iterations all of these different constructions are equivalent, this is not the case if we want to extend our definition to other types of iterations.

We take a different route instead. Looking at products as a type of degenerate iterations, we may want to mimic this definition here. But a copy-paste approach is bound to result in just a product of symmetric extensions. While this is fine, it is not what we are looking for. We want to force over the symmetric extensions, but with a ``very canonically defined forcing''. The idea is that we want to iterate $\PP\ast\dot\QQ$, and in $V[G]$, where $G\subseteq\PP$, the forcing $\dot\QQ^G$ is isomorphic to a forcing in $V$, and to some extent, this isomorphism does not even depend on $G$. This means that we are really taking a product. But in the symmetric extension given by $\PP$, the forcing $\dot\QQ^G$ is not isomorphic to any partial order in $V$, maybe because it cannot be well-ordered, or maybe due to a similar consideration. Nevertheless, it is distinct from forcings in $V$ as far as $\HS^G$ is concerned.

A symmetric iteration is called \textit{productive} if it behaves, essentially as a product. Namely, each $\dot\QQ_\alpha$, $\dot\sG_\alpha$, and $\dot\sF_\alpha$ are $\bullet$-names, with $\1_\alpha$ deciding all the relevant formulas. Namely, $\1_\alpha$ decides the truth value of when two conditions, automorphisms, or groups, are equal; it decides when $\dot\pi\dot q=\dot q'$, etc. In this case, the symmetric system is ``essentially a copy of a ground model system'',\footnote{Specifically, in the generic extension this will true, but the isomorphism itself might have been non-symmetric, so the ``current'' intermediate model over does not know about it.} and the iteration can be presented as a bona fida product of these names. For this concept to be complete we need to also remove the flexibility in the definition of supports, which is needed for the iterative definition of $\IS_\alpha$. We require, in the productive case, that $\tup{\dot H_\beta\mid\beta<\alpha}$ is an \textit{excellent support}, meaning that $\dot H_\beta$ is a name appearing in $\dot\sF_\beta$, and in particular a $\PP_\beta$-name, and the finite set of non-trivial coordinates is decided in advance. This is in line with the previous demands: everything is decided in advance, this is ``almost a product''.

Now that we have restricted the iteration, we can extend the concept of ``generic''.
\begin{definition}
  Suppose that $\tup{\PP,\cG,\cF}$ is an iteration of symmetric extensions. We say that $D\subseteq\PP$ is \textit{symmetric} if there is an excellent support $\vec H$ such that whenever $p\forces\vec\pi\in\vec H$, then $p\forces \check D=\{\gaut{\vec\pi}q\mid q\in D\}^\bullet$. In other words, $D$ is stable, as a set, under a large group of automorphisms. We say that $D$ is \emph{symmetrically dense} if it is a symmetric dense set.\footnote{And this extends to predense, open, etc.} We say that $G\subseteq\PP$ is \textit{symmetrically $V$-generic} if it is a filter meeting every symmetrically dense set in $V$.
\end{definition}

It turns out that symmetrically generic filters are exactly the filters needed to interpret symmetric names, and that the following theorem holds.
\begin{theorem}
  Let $\PP$ be a productive iteration,\footnote{These include, of course, actual products, as well as single-step symmetric extensions.}and let $p\in\PP$ and $\dot x\in\IS$ be some symmetric name. The following conditions are equivalent:
  \begin{enumerate}
  \item $p\forces^\IS\varphi(\dot x)$.
  \item For every symmetrically $V$-generic filter $G$ such that $p\in G$, $\IS^G\models\varphi(\dot x^G)$.
  \item For every $V$-generic filter $G$ such that $p\in G$, $\IS^G\models\varphi(\dot x^G)$.\qed
  \end{enumerate}
\end{theorem}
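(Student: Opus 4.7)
The plan is to establish the three statements cyclically, with the truth lemma $(1)\Leftrightarrow(2)$ as the core. The implication $(2)\Rightarrow(3)$ is immediate: any $V$-generic filter meets every dense set in $V$, in particular every symmetrically dense one, so every $V$-generic filter is symmetrically $V$-generic, and thus $(2)$ is formally stronger than~$(3)$.

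For $(1)\Leftrightarrow(2)$ I would run an induction on the complexity of $\varphi$, mirroring the classical truth lemma for single-step symmetric extensions but with two adjustments tailored to iterations. First, the Symmetry Lemma must be upgraded: for every $\vec\pi\in\cG$ and every $\dot x\in\IS$, one should have $p\forces^\IS\varphi(\dot x)$ if and only if $\gaut{\vec\pi}p\forces^\IS\varphi(\gaut{\vec\pi}\dot x)$. This relies on $\IS$ being closed under the generic automorphisms, which is built into the definition of an excellent support. Second, and this is where productivity pays off, at each inductive step the dense sets that appear must be symmetric: given $\dot x,\dot y\in\IS$, pick an excellent support $\vec H$ that simultaneously stabilises $\dot x$ and $\dot y$; then the set of conditions deciding the relevant atomic or existential statement about $\dot x$ and $\dot y$ is stable under every $\gaut{\vec\pi}$ with $\vec\pi\in\vec H$, hence symmetric in the sense of the definition preceding the theorem. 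A symmetrically $V$-generic filter meets such a set, and the induction proceeds.

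Having $(1)\Leftrightarrow(2)$, the implication $(3)\Rightarrow(1)$ follows by contraposition. Suppose $p\nforces^\IS\varphi(\dot x)$. By the dichotomy for $\forces^\IS$---which is available because $\IS$ is closed under the $\bullet$-names and antichain mixings made legitimate by productivity---there is some $q\le p$ with $q\forces^\IS\neg\varphi(\dot x)$. Applying the already established $(1)\Rightarrow(2)\Rightarrow(3)$ to $q$ and $\neg\varphi$, any $V$-generic filter $G$ through $q$ satisfies $\IS^G\models\neg\varphi(\dot x^G)$, contradicting~$(3)$.

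The main obstacle I expect to meet is the existential step in the truth lemma: given that densely below $p$ some $\dot y\in\IS$ witnesses $\psi(\dot y,\dot x)$, one must amalgamate those local witnesses along an antichain into a single name of $\IS$. The amalgamation is precisely a mixing, and the delicate point is that mixing must preserve hereditary respect \emph{with an excellent support}, not merely symmetry. Verifying that the finite-coordinate structure of excellent supports survives the mixing---so that the amalgamated name lands back in $\IS$ and is respected by a group witnessing symmetry of the resulting dense set---is the technical heart of the argument, and is exactly the feature for which productive iterations were designed.
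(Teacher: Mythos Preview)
The paper does not prove this theorem at all: the statement ends with a \qed\ inside the enumerate, signalling that it is quoted without proof from the author's earlier work on iterating symmetric extensions \cite{Karagila:Iterations}. There is therefore no ``paper's own proof'' against which to compare your proposal.

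As for the proposal itself, the overall architecture is the standard one and is what you would find if you chased the reference: the cycle $(2)\Rightarrow(3)\Rightarrow(1)\Rightarrow(2)$, with $(1)\Leftrightarrow(2)$ proved by induction on formula complexity and the key point being that the dense sets arising in the induction are symmetric. Your identification of the existential step as the delicate part is correct, and your remark that productivity is exactly what makes the mixing land back in $\IS$ with an excellent support is the right intuition. One small caution: in the $(3)\Rightarrow(1)$ step you invoke a ``dichotomy for $\forces^\IS$'' as if it were already available, but that dichotomy (that $p\nforces^\IS\varphi$ implies some $q\leq p$ forces $\lnot\varphi$) is itself part of what the induction establishes, so be careful not to assume it before you have it.
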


It is not hard to check now that at least for the successor steps, the iteration of ``pointwise'' symmetrically generic filters is indeed a symmetrically generic filter.

We finish this overview with a preservation theorem.
\begin{theorem}\label{thm:preservation}
  Suppose that $\tup{\dot\QQ_\alpha,\dot\sG_\alpha,\dot\sF_\alpha\mid\alpha<\delta}$ defines a symmetric iteration, and $G$ is $V$-generic for the iteration. Moreover, assume that for every $\alpha<\delta$, $\1\forces_\alpha\dot\QQ_\alpha$ is weakly homogeneous and $\dot\sG_\alpha$ is rich enough to witness this.\footnote{We will say in this case that $\sG_\alpha$ witnesses the homogeneity of $\QQ_\alpha$.} Let $\eta$ be an ordinal such that there exists $\alpha_0<\delta$ that for any $\alpha\in(\alpha_0,\delta)$ the following equality holds: \[V_\eta^{\IS_\alpha^{G\restriction\alpha}}=V_\eta^{\IS_{\alpha+1}^{G\restriction\alpha+1}}.\]
  Then $V_\eta^{\IS_\delta^G}=V_\eta^{\IS_{\alpha_0+1}^{G\restriction\alpha_0+1}}$. In other words, if no sets of rank ${<}\eta$ were added at successor steps, none were added at limit steps either.\qed
\end{theorem}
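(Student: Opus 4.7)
The plan is to prove, by transfinite induction on $\alpha \in (\alpha_0, \delta]$, that $V_\eta^{\IS_\alpha^{G\restriction\alpha}} = V_\eta^{\IS_{\alpha_0+1}^{G\restriction\alpha_0+1}}$. The base case $\alpha = \alpha_0+1$ is trivial, and the successor step follows immediately from the hypothesis composed with the previous induction step. The substantive case is when $\alpha = \lambda$ is a limit, which I attack by a secondary $\in$-induction on the rank of a candidate element $x \in V_\eta^{\IS_\lambda^{G\restriction\lambda}}$.

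Fix such a limit $\lambda$ and such an $x$. The $\in$-induction gives that every $z \in x$ already sits in $\IS_\beta^{G\restriction\beta}$ for every $\beta \in (\alpha_0,\lambda)$. Pick $\dot x \in \IS_\lambda$ representing $x$ with an excellent support $\vec H$, and let $\beta^*\in(\alpha_0,\lambda)$ strictly dominate the finite set of non-trivial coordinates of $\vec H$. The goal is to produce a name in $\IS_{\beta^*}$ whose $G\restriction\beta^*$-interpretation is $x$, placing $x$ into $V_\eta^{\IS_{\beta^*}^{G\restriction\beta^*}}$, which by the successor case equals the stabilised model.

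The candidate is
\[
\dot x' = \sett{\tup{p_0,\dot z} \midd p_0\in\PP_{\beta^*},\ \dot z\in\IS_{\beta^*},\ (p_0\ast\1)\forces^\IS_\lambda \dot z\in\dot x},
\]
with $\dot z$ further restricted to names of rank bounded by some fixed ordinal, which is legitimate because $x\in V_\eta$. Every $\sigma\in\cG_{\beta^*}$ that fixes $\dot x$ extends, trivially above $\beta^*$, to an element of $\cG_\lambda$ that still fixes $\dot x$, and the symmetry lemma then shows $\sigma\dot x'$ is merely a reindexing of $\dot x'$; hence $\dot x'\in\IS_{\beta^*}$ with support $\vec H\restriction\beta^*$. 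To check $\dot x'^{G\restriction\beta^*}=x$, given $z\in x$ pick $\dot z\in\IS_{\beta^*}$ interpreting to $z$ together with some $p=(p_0,p_1)\in G$ forcing $\dot z\in\dot x$. The critical claim is that in fact $(p_0,\1)\forces^\IS_\lambda\dot z\in\dot x$: for any extension $(p_0',q)$ of $(p_0,\1)$, use weak homogeneity to pick, at each of the finitely many coordinates $\alpha\in[\beta^*,\lambda)$ appearing in $p_1$ or $q$, an element $\sigma_\alpha$ of $\dot\sG_\alpha$ taking $p_1(\alpha)$ to a condition compatible with $q(\alpha)$; assemble these into a $\vec\sigma\in\cG_\lambda$ which is trivial on $[0,\beta^*)$. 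Since the excellent support of $\dot x$ is trivial above $\beta^*$ and $\dot z$ lives entirely below, $\gaut{\vec\sigma}$ fixes both $\dot x$ and $\dot z$, and the symmetry lemma yields $(p_0,\vec\sigma(p_1))\forces^\IS_\lambda\dot z\in\dot x$. A common extension with $(p_0',q)$ supplies the density needed to invoke the forcing theorem.

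The main obstacle is the bookkeeping around this tail-homogeneity argument within the iterated symmetric framework: one has to check that a finite-support automorphism supported entirely in $[\beta^*,\lambda)$ genuinely belongs to $\cG_\lambda$, that $\gaut{\vec\sigma}$ acts coordinatewise on conditions as expected, and that the symmetry lemma transfers cleanly to $\forces^\IS_\lambda$ for symmetric names. The excellent-support machinery of \cite{Karagila:Iterations} supplies each ingredient separately; combining them, and verifying hereditary (rather than merely top-level) respectedness of $\dot x'$, is the real work.
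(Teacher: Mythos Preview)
The paper does not prove this theorem here; it is stated with a terminal \qed and the reader is referred to \S9.2 of \cite{Karagila:Iterations}. So there is no in-paper argument against which to compare directly.

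Your outline is the natural argument and matches what one finds in \cite{Karagila:Iterations}: induct on $\alpha$, and at a limit $\lambda$ use finiteness of the support of $\dot x$ together with weak homogeneity of the tail to reduce $\dot x$ to a $\PP_{\beta^*}$-name. The core reduction---that $(p_0,p_1)\forces^\IS_\lambda\dot z\in\dot x$ implies $(p_0,\1)\forces^\IS_\lambda\dot z\in\dot x$ via an automorphism $\vec\sigma$ supported entirely in $[\beta^*,\lambda)$---is exactly the right mechanism. Two small remarks. First, you invoke an \emph{excellent} support, but that terminology belongs to the productive setting; the theorem as stated covers general symmetric iterations, where a support is only a $\PP_\lambda$-name for a sequence whose non-trivial part is \emph{forced} finite. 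The fix is routine: work below a condition in $G$ that decides this finite set, and observe that $\vec H\restriction\beta^*$ can then be replaced by a genuine $\PP_{\beta^*}$-support. Second, the $\sigma_\alpha$ must be chosen recursively in $\alpha$ rather than simultaneously, since applying $\sigma_\alpha$ transforms the names $p_1(\gamma)$ at later coordinates $\gamma>\alpha$ before you get to choose $\sigma_\gamma$; your phrasing reads as a parallel choice, but the sequential version goes through by the usual iterated-homogeneity bookkeeping. Your worry about hereditary respectedness of $\dot x'$ is misplaced: every name appearing in $\dot x'$ is already drawn from $\IS_{\beta^*}$, so only the top level needs checking.
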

This theorem is in stark contrast to the familiar case in the usual context of iterated forcing: iterating, with finite support, forcings which are not c.c.c.\ will collapse cardinals; and iterating non-trivial forcings, even if they are c.c.c., will add Cohen reals at limit steps. But in the case of symmetric iterations, even if the forcings are non-trivial, as long as they are homogeneous and do not add reals, the limit steps will not add reals either.

As a consequence, we can extend our apparatus now to an $\Ord$-length iteration while preserving $\ZF$ in the resulting model. Moreover, the result holds for productive iterations with symmetrically generic filters, as one can state it in the language of forcing, rather than talking about $V_\eta$ of various models. See also \S9.2 of \cite{Karagila:Iterations}.

\subsection{Permutable families and scales}

The key mechanism in the construction of the Bristol model is ``decoding a long sequence from a short sequence''. This can mean a sequence of length $\omega_1$ from a Cohen real, or a sequence of length $\omega_{43}$ from one of length $\omega_{42}$. We use almost disjoint families in successor steps to repeatedly decode these sequences, and we use a particular type of a PCF-scale to succeed at this task when we are at limit steps. This will be as good a place as any to remind the reader that we are working in $\ZFC$, especially when thinking about these combinatorial objects that are used here.

\begin{definition}
  Let $\kappa$ be a regular cardinal, and fix a family $\cA=\{A_\alpha\mid\alpha<\kappa^+\}$ of unbounded subsets of $\kappa$ such that for $\alpha<\beta$, $\sup(A_\alpha\cap A_\beta)<\kappa$. For permutations $\pi\colon\kappa\to\kappa$ and $\Pi\colon\kappa^+\to\kappa^+$ we say that that \textit{$\pi$ implements $\Pi$} if $\pi``A_\alpha=^*A_\Pi(\alpha)$ for all $\alpha<\kappa^+$.
\end{definition}

Here we use $=^*$ to mean equality up to a bounded subset of $\kappa$. Which $\kappa$, of course, will be clear from the context, so we will spare the notation $=^*_\kappa$, or worse, from the reader.

We are looking for an abstract property of an almost disjoint family which will ensure that it implements any bounded permutation of $\kappa^+$, that is any permutation of $\kappa^+$ which is the identity on a tail can be implemented.

\begin{definition}
  Let $\kappa$ be a regular cardinal, $\{A_\alpha\mid\alpha<\kappa^+\}\subseteq[\kappa]^\kappa$ is called a \textit{permutable family} if it is almost disjoint, that is for $\alpha\neq\beta$, $\sup(A_\alpha\cap A_\beta)<\kappa$, and for every $I\in[\kappa^+]^{<\kappa^+}$ there is a pairwise disjoint family $\{B_\xi\mid\xi\in I\}$ such that $B_\xi=^*A_\xi$ and  \[\alpha\in I\iff A_\alpha\cap\bigcup_{\xi\in I} B_\xi\text{ is unbounded in }\kappa.\]
\end{definition}

We call $\{B_\xi\mid\xi\in I\}$ as in this definition a \textit{disjoint approximation}, and in the case where $B_\xi\subseteq A_\xi$, we say it is a \textit{disjoint refinement}. Note that both of these always exist, as long as $|I|\leq\kappa$, by a standard transfinite recursion argument using the regularity of $\kappa$.

\begin{proposition}
  If $\cA$ is a permutable family of subsets of a regular cardinal $\kappa$, then it implements every bounded permutation of $\kappa^+$.
\end{proposition}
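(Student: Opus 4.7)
The plan is to feed the support of $\Pi$ into the permutable family condition and then patch together bijections between the resulting disjoint approximation pieces, extending by the identity outside. So first I would let $I=\{\alpha<\kappa^+\mid\Pi(\alpha)\neq\alpha\}$; since $\Pi$ is bounded, $|I|<\kappa^+$, and since $\Pi$ is a bijection, $\Pi$ restricts to a permutation of $I$. Apply the permutable family hypothesis to this $I$ to obtain a pairwise disjoint family $\{B_\xi\mid\xi\in I\}$ with $B_\xi=^*A_\xi$ and with the key biconditional $\alpha\in I\iff A_\alpha\cap\bigcup_{\xi\in I}B_\xi$ unbounded.

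For each $\xi\in I$, both $B_\xi$ and $B_{\Pi(\xi)}$ have cardinality $\kappa$ (being $=^*$ to unbounded subsets of the regular $\kappa$), so fix any bijection $\pi_\xi\colon B_\xi\to B_{\Pi(\xi)}$. Define $\pi\colon\kappa\to\kappa$ by $\pi(x)=\pi_\xi(x)$ if $x\in B_\xi$ for some (necessarily unique) $\xi\in I$, and $\pi(x)=x$ otherwise. Since $\Pi$ permutes $I$ and $\{B_\xi\}_{\xi\in I}$ is disjoint, the union $U=\bigcup_{\xi\in I}B_\xi$ equals $\bigcup_{\xi\in I}B_{\Pi(\xi)}$, and the $\pi_\xi$'s assemble into a bijection of $U$ with itself. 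Together with the identity on $\kappa\setminus U$, this gives a bona fide permutation of $\kappa$.

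Now verify $\pi``A_\alpha=^*A_{\Pi(\alpha)}$, splitting into cases. If $\alpha\in I$, decompose $A_\alpha$ as $(A_\alpha\cap B_\alpha)\sqcup(A_\alpha\cap\bigcup_{\xi\in I\setminus\{\alpha\}}B_\xi)\sqcup(A_\alpha\setminus U)$. Since $B_\xi=^*A_\xi$ and $\cA$ is almost disjoint, for $\xi\neq\alpha$ we get $A_\alpha\cap B_\xi$ bounded; since $A_\alpha=^*B_\alpha$, we also have $A_\alpha\cap B_\alpha=^*A_\alpha$, forcing $A_\alpha\setminus U$ to be bounded too. Hence $\pi``A_\alpha=^*\pi``(A_\alpha\cap B_\alpha)=^*\pi``B_\alpha=B_{\Pi(\alpha)}=^*A_{\Pi(\alpha)}$, where I use that the image under a bijection of a set of size ${<}\kappa$ is itself bounded by regularity of $\kappa$. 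If $\alpha\notin I$, then $\Pi(\alpha)=\alpha$ and the biconditional gives $A_\alpha\cap U$ bounded; since $\pi$ is the identity on $A_\alpha\setminus U$, we conclude $\pi``A_\alpha=^*A_\alpha\setminus U=^*A_\alpha=A_{\Pi(\alpha)}$.

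The main conceptual obstacle is recognising why the biconditional in the definition of a permutable family is strictly stronger than what one gets from a mere disjoint refinement, and is precisely what is needed for the fixed case $\alpha\notin I$: without it, $\pi$ could scramble the tail of some $A_\alpha$ hidden inside the $B_\xi$'s and fail $\pi``A_\alpha=^*A_\alpha$. The moved case $\alpha\in I$ is then essentially bookkeeping, reducing everything to the almost-equality $B_\xi=^*A_\xi$ and the regularity of $\kappa$ used to control images of bounded sets.
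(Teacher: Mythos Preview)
Your argument is correct and follows essentially the same approach as the paper's: pick a disjoint approximation for an index set covering the support of $\Pi$, glue together bijections between the $B_\xi$'s, extend by the identity, and verify implementation. The only cosmetic differences are that the paper takes $I=\eta$ for an ordinal bound $\eta$ rather than the exact support, and uses the order isomorphism $B_\alpha\to B_{\Pi(\alpha)}$ rather than an arbitrary bijection; your write-up also spells out the verification (including the role of the biconditional for $\alpha\notin I$) that the paper dismisses with ``Easily, $\pi$ implements $\Pi$''.
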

\begin{proof}
  Let $\Pi$ be a bounded permutation of $\kappa^+$, fix $\eta<\kappa^+$ such that $\Pi$ does not move any ordinals above $\eta$. Next, set $I=\eta$ and let $\{B_\xi\mid\xi<\eta\}$ be a disjoint refinement. Now let $\pi$ be the function which is the order isomorphism from $B_\alpha$ to $B_\beta$ when $\Pi(\alpha)=\beta$, and the identity elsewhere. Easily, $\pi$ implements $\Pi$.
\end{proof}

Having fixed a permutable family, if $\pi\colon\kappa\to\kappa$ implements $\Pi$, we will denote this by $\iota(\pi)=\Pi$.

\begin{proposition}
Let $\kappa$ be a regular cardinal, then a permutable family exists.
\end{proposition}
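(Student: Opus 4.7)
The plan is to construct the family by transfinite recursion of length $\kappa^+$, using $\kappa^{<\kappa}=\kappa$ (available under $\GCH$, consistent with the standing assumptions of the paper). First, I would identify $\kappa$ bijectively with the tree $T=\kappa^{<\kappa}$; for each cofinal branch $b\in\kappa^\kappa$, I set $A_b:=\{b\restriction\gamma\mid\gamma<\kappa\}\subseteq T\cong\kappa$, which is an unbounded set of size $\kappa$. Any two such sets coming from distinct branches are almost disjoint, because the branches split at some $\gamma_0<\kappa$ and $A_b\cap A_{b'}\subseteq\{b\restriction\gamma\mid\gamma\leq\gamma_0\}$. The family $\{A_\alpha\mid\alpha<\kappa^+\}$ will consist of $A_{b_\alpha}$ for branches $b_\alpha$ chosen in the next step.

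I would next build $b_\alpha$ by recursion on $\alpha<\kappa^+$, aiming for a uniformity strictly stronger than pairwise distinctness: for every $I\in[\kappa^+]^{<\kappa^+}$ and every $\alpha\notin I$, I want there to exist truncation levels $\gamma_\xi<\kappa$ for $\xi\in I$ so that the truncations $B_\xi:=\{b_\xi\restriction\gamma\mid\gamma\geq\gamma_\xi\}$ are pairwise disjoint and $A_{b_\alpha}\cap\bigcup_{\xi\in I}B_\xi$ is bounded in $\kappa$. At stage $\alpha$ I have $|\alpha|\leq\kappa$ prior branches; using $\kappa^{<\kappa}=\kappa$ I can index through the relevant $\kappa$-subfamilies together with candidate truncation profiles, and diagonalize to choose $b_\alpha$ that eventually avoids each configuration that would spoil the uniformity. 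The cardinal arithmetic $2^\kappa=\kappa^+$ (from $\GCH$) ensures $\kappa^\kappa$ has enough elements to sustain this diagonalization across $\kappa^+$ stages.

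Granted the $b_\alpha$'s, permutability for a given $I$ would follow by a standard disjoint refinement: enumerating $I=\{\xi_\rho\mid\rho<|I|\}$, I pick $\gamma_{\xi_\rho}$ just above $\sup_{\rho'<\rho}\sup(A_{b_{\xi_\rho}}\cap A_{b_{\xi_{\rho'}}})$, which stays below $\kappa$ by regularity since $|\rho|<\kappa$. The $\{B_{\xi_\rho}\}$ are then pairwise disjoint and $B_{\xi_\rho}=^*A_{b_{\xi_\rho}}$. The forward direction of the membership biconditional is automatic from $B_\xi=^*A_\xi$; the backward direction, that $A_{b_\alpha}\cap\bigcup_\rho B_{\xi_\rho}$ is bounded for every $\alpha\notin I$, is exactly the uniformity property engineered in the previous step. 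The main obstacle will be in that second step, ensuring that the diagonalization can anticipate all the accumulation threats to $b_\alpha$ that arise from future $I$'s, which requires delicate bookkeeping over the recursion and a careful handling of limit stages.
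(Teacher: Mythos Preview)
Your approach is substantially more complicated than the paper's, and the diagonalization step you flag as the ``main obstacle'' is indeed a genuine gap. The paper's argument is essentially one line: take any $\subseteq^*$-increasing tower $\tup{T_\alpha\mid\alpha<\kappa^+}$ in $[\kappa]^\kappa$ (available under the standing $\GCH$ hypothesis by a routine recursion) and set $A_\alpha=T_{\alpha+1}\setminus T_\alpha$. Permutability then follows directly from the linear structure of the tower. Given $I$, let $\eta>\sup I$, take a disjoint refinement $\{B_\xi\mid\xi<\eta\}$ of $\{A_\xi\mid\xi<\eta\}$, and intersect each with $T_\eta$. For $\alpha<\eta$ with $\alpha\notin I$ the bound on $A_\alpha\cap\bigcup_{\xi\in I}B_\xi$ comes from the disjointness of $B_\alpha$ from the other $B_\xi$; for $\alpha\geq\eta$ it comes from $A_\alpha\cap T_\eta$ being bounded, since $T_\eta\subseteq^* T_\alpha$ and $A_\alpha\cap T_\alpha=\varnothing$. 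No diagonalization, no bookkeeping, no anticipation of future stages.

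Your tree-branch construction faces two problems you do not resolve. First, at a stage $\alpha$ with $|\alpha|=\kappa$ there are $2^\kappa=\kappa^+$ subsets $I\subseteq\alpha$ to consider, but a branch through $\kappa^{<\kappa}$ has only $\kappa$ levels; you cannot diagonalize against $\kappa^+$ requirements with a single branch without first reducing to a $\kappa$-sized family (for instance, to initial segments only) and then arguing separately that this suffices for arbitrary $I$. Second, the required disjoint approximation for a fixed $I$ must work for \emph{every} $\alpha\notin I$ simultaneously, including all branches constructed after stage $\sup I$; so the requirement is not naturally a condition on any single $b_\alpha$, and when $\kappa$ prior branches already occupy up to $\kappa$ nodes at each level of the tree it is unclear there is room to route future branches around a committed $\bigcup_{\xi\in I}B_\xi$. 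The tower replaces your two-dimensional branching picture with a one-dimensional one, which is exactly why both difficulties evaporate.
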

\begin{proof}
  Let $\tup{T_\alpha\mid\alpha<\kappa^+}\subseteq[\kappa]^\kappa$ be a $\subsetneq^*$-increasing family of subsets of $\kappa$. Define $A_\alpha$ as $T_{\alpha+1}\setminus T_\alpha$, then $\{A_\alpha\mid\alpha<\kappa^+\}$ is a permutable family. To see that, let $I\in[\kappa^+]^{<\kappa^+}$ and let $\eta=\sup I+1$, then every $A_\alpha$ for $\alpha\in I$ is almost contained in $T_\eta$. We let $B_\alpha$ for $\alpha<\eta$ be a disjoint refinement (of a potentially larger set) such that $B_\alpha\subseteq T_\eta$, in particular $\bigcup_{\xi\in I}B_\xi\subseteq T_\eta$. If $\alpha\notin I$, then either $\alpha<\eta$, in which case $A_\alpha=^*B_\alpha$, and $B_\alpha\cap\bigcup_{\xi\in I}B_\xi=\varnothing$, or else $\alpha\geq\eta$, in which case $A_\alpha\cap T_\eta$ is bounded in $\kappa$.
\end{proof}

\begin{remark}
  It should be pointed out that one can construct an increasing family of subsets from a permutable family. Recursively, set $T_0=A_0$, $T_{\alpha+1}=T_\alpha\cup A_\alpha$, and for limit steps recursively construct $T_\alpha$ as the union of a disjoint refinement of $\{A_\beta\mid\beta<\alpha\}$, which exists by definition of a permutable family. As long as $\alpha<\kappa^+$ we can ensure that these disjoint refinements are also increasing in inclusion, which guarantees that $T_\alpha$ contains previous limit steps.
\end{remark}

\begin{definition}
  Given a permutable family on a regular cardinal $\kappa$, the \textit{derived group} is the group $\sG$ of all permutations of $\kappa$ which implement a bounded permutation of $\kappa^+$. The \textit{derived filter} is the normal filter of subgroups on $\sG$ generated by $\fix(\cB)$, for a disjoint approximation $\cB$, where $\fix(\cB)=\{\pi\in\sG\mid\pi\restriction\bigcup\cB=\id\}$.
\end{definition}

While these definitions are given for regular cardinals, we will only use them in the basis case and successor case. For the limit case, where $\kappa$ is a limit cardinal, we need to use a slightly different machinery, as the goal is to coalesce the information from previous steps and use it as a kind of ``short sequence''. In some way, inaccessible cardinals are the ``simpler case'' compared to singular cardinals. Nevertheless, there is no need of separating the two.

\begin{definition}
  Let $\lambda$ be a limit cardinal, and let $\SC(\lambda)$ denote $\{\mu^+\mid\mu<\lambda\}$. Let $\{f_\alpha\mid\alpha<\lambda^+\}$ be a scale in $\prod\SC(\lambda)$.\footnote{This is an increasing sequence of functions in the product which is bounding in the eventual domination order. We actually only need it to be an increasing sequence, it is irrelevant that it is also bounding.} Given a sequence of permutations $\vec\pi=\tup{\pi_\theta\mid\theta\in\SC(\lambda)}$ such that $\pi_\theta$ is a permutation of $\theta$, we say that $\vec\pi$ \textit{implements} a function $\Pi\colon\lambda^+\to\lambda^+$ if for every $\alpha<\lambda^+$ and for every large enough $\theta$, \[ f_{\Pi(\alpha)}(\theta) = \pi_\theta f_\alpha(\theta).\]

  We call a scale \textit{permutable} if it implements every bounded permutation of $\lambda^+$.
\end{definition}

As with the case of permutable families, we denote by $\iota(\vec\pi)$ the permutation $\Pi$ that is implemented by $\vec\pi$.

The fact that inaccessible cardinals are the ``simpler case'' can be trivially seen as a consequence of the following proposition, while remembering that working in $V=L$ we always have the wanted cardinal arithmetic.
\begin{proposition}
Suppose that $\lambda$ is an inaccessible cardinal and $2^\lambda=\lambda^+$, then there is a permutable scale on $\lambda$.\footnote{The assumption on $2^\lambda$ can be completely removed by simply limiting ourselves to increasing sequences instead of scales. Nevertheless, as we are working under $\GCH$ anyway, this is just simpler.}\qed
\end{proposition}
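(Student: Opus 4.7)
The plan is to construct $\{f_\alpha\mid\alpha<\lambda^+\}$ by transfinite recursion as a strictly $<^*$-increasing cofinal sequence in $\prod\SC(\lambda)$, and then verify that any such scale (built the natural way) is permutable. Under the hypotheses, $|\prod\SC(\lambda)|=\lambda^\lambda=2^\lambda=\lambda^+$, so enumerate the product as $\{g_\alpha\mid\alpha<\lambda^+\}$. For each $\alpha<\lambda^+$ fix a bijection $e_\alpha\colon|\alpha|\to\alpha$ (padded arbitrarily to a function on $\lambda$) and set
\[
f_\alpha(\theta)=\max\Bigl(\sup\{f_{e_\alpha(\xi)}(\theta):\xi<\theta^-\},\,g_\alpha(\theta)\Bigr)+1,
\]
where $\theta^-$ denotes the cardinal predecessor of $\theta\in\SC(\lambda)$. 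Regularity of $\theta$ keeps $f_\alpha(\theta)<\theta$, and for $\beta<\alpha$ one has $f_\beta(\theta)<f_\alpha(\theta)$ as soon as $\theta^->e_\alpha^{-1}(\beta)$, so the sequence is strictly $<^*$-increasing with each threshold inside $\SC(\lambda)$, while cofinality follows from the $g_\alpha$ absorption.

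For permutability, fix a bounded permutation $\Pi$ with support $\eta<\lambda^+$ and an enumeration $e\colon|\eta|\to\eta$ listing each $\Pi$-orbit contiguously, so that every initial segment $e([0,\zeta))$ is closed under $\Pi$ and $\Pi^{-1}$. Put $V_\theta=e([0,\theta^-))\subseteq\eta$; then $|V_\theta|=\theta^-<\theta$, $V_\theta$ is $\Pi$-closed, and $\bigcup_{\theta\in\SC(\lambda)}V_\theta=\eta$. Define $\pi_\theta\colon\theta\to\theta$ to send $f_\alpha(\theta)\mapsto f_{\Pi(\alpha)}(\theta)$ for $\alpha\in V_\theta$ and to be the identity off the set $\{f_\alpha(\theta):\alpha\in V_\theta\}$.

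Two facts must be checked, the second being the crux. First, $\pi_\theta$ must actually be a permutation for all sufficiently large $\theta\in\SC(\lambda)$, which amounts to the injectivity of $\alpha\mapsto f_\alpha(\theta)$ on $V_\theta$: by the strict $<^*$-increase, each of the $|V_\theta|^2\leq\theta<\lambda$ many pairs yields a threshold in $\SC(\lambda)$, and regularity of $\lambda$ keeps the supremum of these thresholds below $\lambda$. Second, $\vec\pi$ must implement $\Pi$: for $\alpha<\eta$ one has $\alpha\in V_\theta$ for all large $\theta$ and injectivity forces $\pi_\theta(f_\alpha(\theta))=f_{\Pi(\alpha)}(\theta)$; for $\alpha\geq\eta$ (so $\Pi(\alpha)=\alpha$) we need $f_\alpha(\theta)\notin\{f_\beta(\theta):\beta\in V_\theta\}$ for all sufficiently large $\theta$.

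I expect the main obstacle to be this last point, since $V_\theta$ itself grows with $\theta$ and, a priori, the ``bad'' set of $\theta$ where $f_\alpha(\theta)$ collides with some $f_\beta(\theta)$ for $\beta\in V_\theta$ is a $\lambda$-indexed union of bounded sets, hence not automatically bounded. The resolution hinges once more on the regularity of $\lambda$: because $|V_\theta|=\theta^-<\lambda$, the thresholds for $\beta\in V_\theta$ supplied by $f_\beta<^*f_\alpha$ have supremum strictly below $\lambda$, and one may if necessary refine the recursion by arranging $e_\alpha$ to enumerate the ordinals in $\eta$ (for all $\eta$ fixed in advance, e.g.\ by dovetailing with an enumeration of bounded $\Pi$'s) before the rest of $\alpha$, so that a uniform bound on the bad set of $\theta$ for each $\alpha\geq\eta$ is readily available.
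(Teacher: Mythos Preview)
The paper gives no detailed proof here, only the remark that it follows by ``a simple transfinite recursion, in a very similar fashion to the permutable family case,'' so there is nothing concrete to compare against. Your scale construction by diagonalisation is fine, and the strategy of implementing a bounded $\Pi$ through growing $\Pi$-closed sets $V_\theta\subseteq\eta$ is the natural one. However, the argument does not close, and neither of your two proposed resolutions works.

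For $\alpha\geq\eta$ you need $f_\alpha(\theta)\notin\{f_\beta(\theta):\beta\in V_\theta\}$ for all large $\theta$, while $V_\theta$ grows with $\theta$. Your first fix---that regularity of $\lambda$ bounds $\sup\{\tau_{\alpha,\beta}:\beta\in V_\theta\}$ below $\lambda$, where $\tau_{\alpha,\beta}$ is the threshold for $f_\beta<^*f_\alpha$---only gives a bound below $\lambda$, not below $\theta$; nothing prevents this supremum from exceeding $\theta$ for cofinally many $\theta$. Exactly the same defect undermines your earlier injectivity claim: bounding the pairwise thresholds for $V_\theta$ by some $\mu<\lambda$ does not show $\beta\mapsto f_\beta(\theta)$ is injective on $V_\theta$ at $\theta$ itself. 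Your second fix, refining the $e_\alpha$'s by dovetailing so that each support $\eta$ is listed ``before the rest of $\alpha$,'' cannot be carried out when $|\eta|=\lambda$: no $\theta\in\SC(\lambda)$ has $\theta^-\geq\lambda$, so $e_\alpha$ can never list all of $\eta$ among its first $\theta^-$ values, and there are $\lambda^+$-many such $\eta$ to accommodate simultaneously.

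What is missing is the analogue of the tower trick from the permutable-family proof: there, $\bigcup_{\xi\in I}B_\xi$ is trapped inside a single $T_{\gamma+1}$, so that for $\alpha\notin I$ the intersection $A_\alpha\cap\bigcup B_\xi$ is controlled by \emph{one} almost-inclusion rather than $|I|$-many. The paper's later remark about the ``key point''---finding $d\colon I\to\SC(\lambda)$ making the tails $f_\alpha``[d(\alpha),\lambda)$ pairwise disjoint---is exactly this kind of uniform control, and it is what the recursion should be designed to produce, rather than something one can recover after the fact from arbitrarily chosen $e_\alpha$'s and an unrelated enumeration $e$ of $\eta$.
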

This can be shown by a simple transfinite recursion, in a very similar fashion to the permutable family case.

\begin{proposition}
Suppose that $\lambda$ is a singular cardinal and $\square^*_\lambda$, then there is a permutable scale on $\lambda$.\qed
\end{proposition}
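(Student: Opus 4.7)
The plan is to mimic the inaccessible case and let $\square^*_\lambda$ play the role of the cardinal arithmetic that made the direct recursion go through there. Concretely, I want a scale $\langle f_\alpha\mid\alpha<\lambda^+\rangle$ in $\prod\SC(\lambda)$ that is not merely $<^*$-increasing but satisfies the following uniform injectivity property: for every $\eta<\lambda^+$ there is $\theta^*=\theta^*(\eta)\in\SC(\lambda)$ with $\theta^*>|\eta|$ such that (a) the map $\alpha\mapsto f_\alpha(\theta)$ is injective on $\eta$ for every $\theta\geq\theta^*$, and (b) for every $\alpha\geq\eta$ the value $f_\alpha(\theta)$ lies outside $\{f_\beta(\theta)\mid\beta<\eta\}$ for all sufficiently large $\theta$. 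Any scale meeting (a) and (b) will be permutable, essentially for free.

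To produce such a scale I would fix a $\square^*_\lambda$-sequence $\langle\cC_\alpha\mid\alpha<\lambda^+\rangle$ and recurse along $\lambda^+$ in a pattern parallel to the $T_\alpha$-construction in the remark after the permutable-family proposition. Successors are straightforward: choose $f_{\alpha+1}$ to dominate $f_\alpha$ on a tail with a large enough jump to preserve (a) and (b) relative to the new index. At a limit $\alpha$ with $\cf(\alpha)>\cf(\lambda)$, pick $C\in\cC_\alpha$ of order type $\cf(\alpha)$ and amalgamate the values along $C$; the coherence of the square sequence lets one patch together uniform bounds witnessing (a) at $\eta=\alpha$ from previously secured bounds at cofinally many $\beta<\alpha$ of the same small cofinality. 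At limit stages of cofinality $\leq\cf(\lambda)$ the coherence is used again to read off the requisite $\theta^*(\alpha)$ directly from a chosen cofinal subsequence furnished by $\cC_\alpha$.

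Once the scale is in hand, the implementing sequence for a bounded permutation $\Pi$ of $\lambda^+$ with support below $\eta<\lambda^+$ is nearly tautological. Take $\theta^*=\theta^*(\eta)$. For $\theta\geq\theta^*$ the partial function $f_\alpha(\theta)\mapsto f_{\Pi(\alpha)}(\theta)$ (for $\alpha<\eta$) is a well-defined injection on $\{f_\alpha(\theta)\mid\alpha<\eta\}\subseteq\theta$ by (a); extend it to a permutation $\pi_\theta$ of $\theta$ acting as the identity off this set, and let $\pi_\theta=\id$ for $\theta<\theta^*$. For $\alpha<\eta$ the equation $\pi_\theta f_\alpha(\theta)=f_{\Pi(\alpha)}(\theta)$ then holds at every $\theta\geq\theta^*$; for $\alpha\geq\eta$ we have $\Pi(\alpha)=\alpha$, and clause (b) forces $\pi_\theta$ to fix $f_\alpha(\theta)$ at all sufficiently large $\theta$. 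Hence $\vec\pi$ implements $\Pi$, as required.

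The hard part is the scale construction: unlike the inaccessible case there is no free diagonalisation available at singular $\lambda$ that simultaneously secures the uniform-tail injectivity for every $\eta<\lambda^+$. The role of $\square^*_\lambda$ is precisely to supply the coherent scaffolding at limit points of cofinality up to $\cf(\lambda)$ along which such a diagonalisation can be performed; this is the only place where the hypothesis genuinely enters, and it is essentially what the hypothesis was designed for in this context.
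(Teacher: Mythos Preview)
Your target property (a) is unattainable. For any ordinal $\eta$ with $\lambda\leq\eta<\lambda^+$ we have $|\eta|=\lambda$, so there is no $\theta^*\in\SC(\lambda)$ with $\theta^*>|\eta|$; and even ignoring that clause, asking that $\alpha\mapsto f_\alpha(\theta)$ be injective on all of $\eta$ at a single coordinate $\theta<\lambda$ is barred by pigeonhole. Since bounded permutations of $\lambda^+$ can move a set of size $\lambda$, this is not a marginal case: your implementing argument in the last paragraph needs (a) precisely for such $\eta$, and it is there that the construction cannot deliver.

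The paper does not build the scale by a direct recursion. It quotes the Cummings--Foreman--Magidor theorem that $\square^*_\lambda$ yields a \emph{better scale}, and then cites Theorem~3.27 of \cite{Karagila:Bristol} for the fact that a better scale is permutable. The operative consequence of ``better'' is the non-uniform analogue of your (a): for every $I\in[\lambda^+]^{<\lambda^+}$ there is a function $d\colon I\to\SC(\lambda)$ such that the tails $\{f_\alpha``[d(\alpha),\lambda)\mid\alpha\in I\}$ are pairwise disjoint. Letting the threshold depend on $\alpha$, rather than fixing a single $\theta^*(\eta)$, is exactly what sidesteps the pigeonhole obstruction; once one has this, the implementing sequence for a bounded $\Pi$ is defined coordinatewise much as you outline, with $\pi_\theta$ acting only on $\{f_\alpha(\theta)\mid\alpha\in I,\ d(\alpha)\leq\theta\}$. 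Your instinct that $\square^*_\lambda$ provides the coherent scaffolding at limit stages is correct, but that work is already packaged in the CFM construction of better scales, and the paper simply invokes it rather than redoing the recursion.
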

The proof of this proposition can be found as Theorem~3.27 in \cite{Karagila:Bristol}. The idea of the proof goes back to the Bristol group, and utilises the work of Cummings, Foreman, and Magidor in \cite{CummingsForemanMagidor:Scales} where it is shown that $\square^*_\lambda$ implies the existence of ``better scales''. The aforementioned Theorem~3.27 show that a better scale is in fact permutable.

The key point in proving a better scale is a permutable scale is that given any $I\in[\lambda^+]^{<\lambda^+}$, we can find a function $d\colon I\to\SC(\lambda)$ such that $\{f_\alpha``[d(\alpha),\lambda)\mid\alpha\in I\}$ is a family of pairwise disjoint sets. The proof of Theorem~3.27 also shows that even if we are only allowing $\pi_\theta$ to be a bounded permutation of $\theta$, this is still enough to ensure that we can implement every bounded permutation of $\lambda^+$ using a permutable scale. We say that a sequence of permutation groups of each $\theta\in\SC(\lambda)$ is \textit{rich enough} if we can require $\pi_\theta$ to be in the relevant group when we find an implementing sequence.

\begin{definition}
  Let $\lambda$ be a limit cardinal, and for every $\theta\in\SC(\lambda)$, let $\sG_\theta$ be a rich enough group of permutations of $\theta$. The \textit{derived group} is the subgroup $\sG$ of the full support product $\prod_{\theta\in\SC(\lambda)}\sG_\theta$ consisting of all sequences $\vec\pi=\tup{\pi_\theta\mid\theta\in\SC(\lambda)}$ which implement a bounded permutation of $\lambda^+$.

  For $\eta<\lambda^+$ and $f\in\prod\SC(\lambda)$ we let $K_{\eta,f}$ be the group \[\{\vec\pi\in\sG\mid\iota(\vec\pi)\restriction\eta=\id\text{ and for all }\theta\in\SC(\lambda), \pi_\theta\restriction f(\theta)=\id\}.\] The \textit{derived filter} is the filter generated by $\{K_{\eta,f}\mid\eta<\lambda^+,f\in\prod\SC(\lambda)\}$.
\end{definition}
We can weaken the definition of $K_{\eta,f}$ and replace $\eta$ by a bounded subset of $\lambda^+$, i.e., $I\in[\lambda^+]^{<\lambda^+}$, and replace $f$ by a sequence of bounded sets of each $\theta$. But as the definition is complicated enough as it is, it is easier to just use $\eta$ and $f$ as upper bounds.

\section{Decoding long sequences}\label{sect:decoding}
Assume $V=L$ throughout this section. The Bristol model is constructed as a symmetric iteration, indeed a productive iteration. We will outline the construction of the different intermediate steps in this iteration, and the arguments needed for utilising the iterations apparatus.
\begin{definition}
A \textit{Bristol sequence} is a sequence indexed by the ordinals such that for $\alpha=0$ or $\alpha$ successor we have $\cA_\alpha=\{A^\alpha_\xi\mid\xi<\omega_{\alpha+1}\}$ which is a permutable family on $\omega_\alpha$, and if $\alpha$ is a limit ordinal then we have $F_\alpha=\{f^\alpha_\xi\mid\xi<\omega_{\alpha+1}\}$ which is a permutable scale in the product $\prod\SC(\omega_\alpha)$.
\end{definition}
Fix a Bristol sequence. By assuming $V=L$ we not only have a Bristol sequence, indeed we have a canonical one, where we choose the $<_L$-minimum permutable family or scale at each point. Our goal is to use these permutable objects and replace at each stage, $\alpha$, a sequence of length $\omega_\alpha$ by one of length $\omega_{\alpha+1}$. In particular the first step is to replace the Cohen real, which is a sequence of length $\omega$, by a sequence of length $\omega_1$. But we want to be able to guarantee that the original sequence is \textbf{not} going to be definable from our longer sequence.

In this sense, we want to decode from a Cohen real a sequence of length $\omega_1$, which captures ``some crucial bits'' of the Cohen real, but not really all of it. Then we want to decode from this $\omega_1$ sequence a new sequence of length $\omega_2$, forget the one of length $\omega_1$, and proceed.

\subsection{Example: first steps}
Let $\PP$ be the Cohen forcing, and in this case we mean $p\in\PP$ is a function from a finite subset of $\omega$ into $2$. Let us omit the index from $\cA_0$, as we are only concerned with the first step at the moment, so $A_\alpha$ denotes the $\alpha$th set in the first permutable family. We let $\sG$ and $\sF$ denote the derived group and filter from $\cA$. The action of $\sG$ on $\PP$ is the natural one:\[\pi p(\pi n)=p(n).\]

We denote by $\dot c$ the canonical name for the Cohen real. For $A\subseteq\omega$ let $\PP\restriction A$ denote the subforcing $\{p\in\PP\mid\dom p\subseteq A\}$, and let $\dot c_A$ denote the name \[\{\tup{p,\check n}\mid p(n)=1\land\dom p\subseteq A\}.\] Of course, $\dot c_A$ is the canonical name of the generic real added by $\PP\restriction A$. We have now that $\pi\dot c_A=\dot c_{\pi``A}$. In general we say that a name $\dot x$ for a set of ordinals is \textit{decent} if every name appearing in it is of the form $\check \xi$ for some $\xi\in\Ord$. We say that a name for a set of ordinals is an \textit{$A$-name} if it is a $\PP\restriction A$-name.

\begin{proposition}\label{prop:reals-support}
Suppose that $\dot x\in\HS$ and $\1\forces\dot x\subseteq\omega$, then there is some disjoint approximation $\cB$ and a decent $\bigcup\cB$-name $\dot x_*$ such that $\1\forces\dot x=\dot x_*$.
\end{proposition}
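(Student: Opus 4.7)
The plan is to pick a disjoint approximation $\cB$ with $\fix(\cB)\subseteq\sym(\dot x)$, which exists since $\sym(\dot x)\in\sF$, and to define the canonical candidate
\[\dot x_*=\{\tup{p,\check n}\mid p\in\PP\restriction\bigcup\cB,\ p\forces n\in\dot x\},\]
which is manifestly a decent $\bigcup\cB$-name. The inclusion $\dot x_*\subseteq\dot x$ is immediate, so the whole content of the proposition reduces to the claim that whenever $q\in\PP$ forces $n\in\dot x$, the restriction $q\restriction\bigcup\cB$ already does.

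A crucial preliminary observation is that $\omega\setminus\bigcup\cB$ is infinite. This is a direct consequence of the biconditional clause in the definition of a disjoint approximation: since $I\in[\omega_1]^{<\omega_1}$ is a proper subset of $\omega_1$, one can pick $\alpha\notin I$, and then $A_\alpha\cap\bigcup\cB$ is bounded in $\omega$ while $A_\alpha$ itself is unbounded, so $A_\alpha\setminus\bigcup\cB\subseteq\omega\setminus\bigcup\cB$ is infinite.

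The main step is then a Symmetry-Lemma argument. Suppose toward a contradiction that $q\forces n\in\dot x$ while some $q'\leq q\restriction\bigcup\cB$ satisfies $q'\forces n\notin\dot x$. Let $F=\dom q\setminus\bigcup\cB$ and $E=\dom q'\setminus\bigcup\cB$; both are finite. Using the infinitude of $\omega\setminus\bigcup\cB$, pick $F'\subseteq(\omega\setminus\bigcup\cB)\setminus(E\cup F)$ with $|F'|=|F|$, and let $\pi$ be the finite-support involution that swaps $F$ with $F'$ pointwise and is the identity elsewhere. Being finitely supported, $\pi A_\alpha=^*A_\alpha$ for every $\alpha$, so $\pi$ implements the identity permutation of $\omega_1$ and thus lies in $\sG$; moreover $\pi\in\fix(\cB)\subseteq\sym(\dot x)$, so by the Symmetry Lemma $\pi q\forces n\in\pi\dot x=\dot x$. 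A quick check shows that $\dom(\pi q)\cap\dom q'$ equals $\dom q\cap\bigcup\cB$, on which $\pi q$, $q$, and $q'$ all agree (since $\pi$ fixes $\bigcup\cB$ pointwise and $q'\leq q\restriction\bigcup\cB$), so $\pi q$ and $q'$ are compatible; their common extension forces both $n\in\dot x$ and $n\notin\dot x$, a contradiction.

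The main obstacle is the geometric step of finding enough room outside $\bigcup\cB$ to displace $F$ into $F'$ disjointly from $E$: this is precisely what the infinitude of $\omega\setminus\bigcup\cB$ provides, and it is in turn a clean consequence of the definition of a disjoint approximation together with $I\ne\omega_1$. Once that is in hand, the $\pi$ we produce is automatically in $\sG$ because finite-support permutations of $\omega$ implement the identity on $\omega_1$, and the remainder is just the Symmetry Lemma and a routine compatibility check.
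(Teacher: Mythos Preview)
Your proof is correct and follows essentially the same approach as the paper: choose a disjoint approximation $\cB$ witnessing $\dot x\in\HS$, define the canonical $\bigcup\cB$-name, and use a finitary permutation in $\fix(\cB)$ to displace the part of one condition outside $\bigcup\cB$ so that it becomes compatible with the other, deriving a contradiction via the Symmetry Lemma. The only cosmetic difference is that the paper moves the putative counterexample $q'$ to make it compatible with $q$, whereas you move $q$ to make it compatible with $q'$; this is the same argument with the roles swapped.
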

\begin{proof}
 Let $\cB$ be a disjoint approximation such that $\fix(\cB)\subseteq\sym(\dot x)$ and define $\dot x_*$ as \[\dot x_* = \left\{\tup{p,\check n}\middd p\forces\check n\in\dot x\land\dom p\subseteq\bigcup\cB\right\}.\]
 It is clear that $\1\forces\dot x_*\subseteq\dot x$, to show equality it is enough to prove that if $p\forces\check n\in\dot x$, then $p\restriction\bigcup\cB\forces\check n\in\dot x$. Let $q\leq p\restriction\bigcup\cB$.

 By the very definition of a disjoint approximation $\bigcup\cB$ is co-infinite, so we may find a finite set $E$ such that $E\cap(\bigcup\cB\cup\dom p)=\varnothing$ and $|E|=|\dom q\setminus\bigcup\cB|$. Then let $\pi$ be a permutation which extends a bijection between the two sets $\dom q\setminus\bigcup\cB$ and $E$, and is the identity elsewhere. Being a finitary permutation it implements the identity function, so indeed $\pi\in\sG$, and by its very definition $\pi\in\fix(\cB)$, so $\pi\dot x=\dot x$. So if $q$ had forced $\check n\notin\dot x$, we would have $\pi q\forces\pi\check n\notin\pi\dot x$, which is the same as $\pi q\forces\check n\notin\dot x$. Alas, $\pi q$ and $p$ are clearly compatible, and so this is impossible.
\end{proof}

We let $G$ be a $V$-generic filter\footnote{Or $L$-generic filter, to be explicit.} and let $M$ denote the symmetric extension $\HS^G$, as is standard, we will ``omit the dot'' to indicate the interpretation of a name, so $c$ is going to be $\dot c^G$, etc. The following is a very easy corollary from the above proposition.
\begin{corollary}
$c\notin M$.\qed
\end{corollary}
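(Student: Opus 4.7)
The plan is to derive this as a direct corollary of Proposition \ref{prop:reals-support} together with a standard genericity-flipping argument on coordinates outside the support.

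First I would argue by contradiction: suppose $c \in M$, so there exists $\dot x \in \HS$ with $\dot x^G = c$. After replacing $\dot x$ by the canonical name $\{\tup{p,\check n}\mid p \forces \check n \in \dot x\}^{\bullet\text{-style}}$ if necessary, we may assume $\1 \forces \dot x \subseteq \omega$, and this replacement stays in $\HS$. By Proposition \ref{prop:reals-support}, there is a disjoint approximation $\cB$ and a decent $\bigcup\cB$-name $\dot x_*$ with $\1 \forces \dot x = \dot x_*$. Since $\dot x^G = \dot c^G$ and forcing decides equalities of names, there exists $p \in G$ with $p \forces \dot x_* = \dot c$.

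Next I would exploit the fact, already noted inside the proof of Proposition \ref{prop:reals-support}, that $\bigcup\cB$ is co-infinite in $\omega$: if $\alpha \notin I$ then $A_\alpha \cap \bigcup\cB$ is bounded while $A_\alpha$ is unbounded, so $\omega \setminus \bigcup\cB$ contains the infinite set $A_\alpha \setminus \bigcup\cB$. Hence I can choose some $n \in \omega \setminus (\bigcup\cB \cup \dom p)$ and form the two one-point extensions $p' = p \cup \{\tup{n,0}\}$ and $p'' = p \cup \{\tup{n,1}\}$. By definition of the Cohen forcing, $p' \forces \check n \notin \dot c$ and $p'' \forces \check n \in \dot c$, and both extend $p$, so $p' \forces \check n \notin \dot x_*$ while $p'' \forces \check n \in \dot x_*$.

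The contradiction will come from the fact that $\dot x_*$ is a $\PP\restriction\bigcup\cB$-name: whether any condition $q \in \PP$ forces $\check n \in \dot x_*$ is decided by $q \restriction \bigcup\cB$, since the forcing relation for names in the subforcing only consults that restriction. But $p'\restriction\bigcup\cB = p\restriction\bigcup\cB = p''\restriction\bigcup\cB$ because $n \notin \bigcup\cB$, so $p'$ and $p''$ must agree on the statement $\check n \in \dot x_*$, contradicting the previous paragraph. The only subtle point to get right is the initial reduction from ``$\dot x^G = c$'' to an actual forcing statement $p \forces \dot x = \dot c$ with $\dot x$ a subset-of-$\omega$ name; everything after that is a routine compatibility/flipping argument, and in particular no further symmetry manipulation is needed beyond what Proposition \ref{prop:reals-support} already provides.
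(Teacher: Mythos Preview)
Your argument is correct and is precisely the intended unpacking of the paper's one-line \qed: the paper treats the corollary as immediate from \autoref{prop:reals-support}, the point being that any real in $M$ lies in $L[c\cap\bigcup\cB]$ for some disjoint approximation $\cB$, whereas $c$ itself cannot since $\omega\setminus\bigcup\cB$ is infinite. Your coordinate-flipping at some $n\notin\bigcup\cB\cup\dom p$ is exactly the elementary product-forcing verification of this last fact, so there is no genuine difference in approach---you have simply written out what the paper leaves implicit.
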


This is where we start seeing the importance of the permutable family, as opposed to any almost disjoint family. If $\{X_\alpha\mid\alpha<\omega_1\}$ is an almost disjoint family, then $\{c\cap X_\alpha\mid\alpha<\omega_1\}$ is a family of mutually generic Cohen reals, any finitely many of them are mutually generic over any other finite subfamily. But in our case, where the almost disjoint family is in fact a permutable one we get \textbf{countable} mutual genericity. Any countable subset of $\{c\cap A_\alpha\mid\alpha<\omega_1\}$ is simultaneously mutually generic over any other countable subset (provided they are pairwise disjoint).

We can actually prove more. Nothing in the proof of \autoref{prop:reals-support} will change if we assume that $\dot x$ is a name of an arbitrary set of ordinals. This shows that every set of ordinals lies in an intermediate model given by $c\cap\bigcup\cB$ for some disjoint approximation. Another way to see this fact is to note that $L[c]$ is, after all, only a Cohen extension by a single real. So every set of ordinals is constructible from a single real.

\begin{corollary}
$M\models\lnot\AC$.
\end{corollary}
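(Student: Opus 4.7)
The plan is to argue by contradiction: suppose $M\models\AC$, and derive a contradiction using the previous corollary that $c\notin M$. Working in $M$, choice gives a bijection $f\colon\eta\to\power(\omega)^M$ for some ordinal $\eta$; encode it as a single set of ordinals $\tilde f=\{\tup{\xi,n}\mid\xi<\eta,\,n\in f(\xi)\}$ via ordinal pairing. Applying the version of \autoref{prop:reals-support} for arbitrary sets of ordinals (noted in the paragraph immediately after the proposition) produces a disjoint approximation $\cB=\{B_\xi\mid\xi\in I\}$ with $I\in[\omega_1]^{<\omega_1}$ such that $\tilde f\in L[c\cap B]$, where $B=\bigcup\cB$; decoding $\tilde f$ back to $f$ inside $L[c\cap B]$ shows that the whole of $\rng(f)=\power(\omega)^M$ sits inside $L[c\cap B]$.

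The key observation is that each $c\cap A_\alpha$ is already an element of $M$: the name $\dot c_{A_\alpha}$ is stabilised pointwise by $\fix(\{A_\alpha\})$, the singleton $\{A_\alpha\}$ is itself a one-element disjoint approximation, so $\fix(\{A_\alpha\})\in\sF$, and hence $\dot c_{A_\alpha}$ is hereditarily symmetric. Consequently $c\cap A_\alpha\in\power(\omega)^M\subseteq L[c\cap B]$ for every $\alpha<\omega_1$. To reach a contradiction, pick any $\alpha\notin I$ (possible since $|I|<\omega_1$): by the defining property of a permutable family $A_\alpha\cap B$ is finite, so $A_\alpha\setminus B$ is infinite, and using the product factorisation $\Add(\omega,\omega)\cong\Add(\omega,B)\times\Add(\omega,\omega\setminus B)$ the real $c\cap(A_\alpha\setminus B)$ is Cohen-generic over $L[c\cap B]$. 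In particular $c\cap(A_\alpha\setminus B)\notin L[c\cap B]$, but this real equals $(c\cap A_\alpha)\setminus B$, which is plainly computable from $c\cap A_\alpha$ and $B$, both of which lie in $L[c\cap B]$.

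The main obstacle is purely bookkeeping: one must verify that \autoref{prop:reals-support} really extends to names for arbitrary sets of ordinals. This is a one-line adaptation of the author's proof, because the finitary permutations $\pi$ used there live in $\sG$ and fix every check-name $\check\xi$ of an ordinal (they act on $\omega$ by permutation and leave everything else rigid), so the verification that $p\restriction\bigcup\cB\forces\check\xi\in\dot x$ goes through word for word.
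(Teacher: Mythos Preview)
Your proof is correct and follows essentially the same approach as the paper's: assume $\AC$, encode $\power(\omega)^M$ as a single set of ordinals, invoke the extension of \autoref{prop:reals-support} to place it inside some $L[c\cap\bigcup\cB]$, and then note that $c\cap A_\alpha$ for $\alpha\notin\dom\cB$ is in $M$ but cannot lie in $L[c\cap\bigcup\cB]$. The paper is terser about the final step, simply asserting $c\cap A_\alpha\notin L[r]$, whereas you spell out the mutual-genericity reason via the product splitting; the underlying argument is the same.
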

\begin{proof}
  Suppose that $M\models\AC$, then there is a set of ordinals $A$ which codes $\RR^M$ (e.g.\ by stacking the real numbers one after another, or by the usual coding of a set into a set of ordinals). Therefore there is $r\in M$ such that $\RR^M=\RR^{L[r]}$. By \autoref{prop:reals-support} we see this is impossible, indeed, if $\cB$ is a disjoint approximation for which $r$ has a $\bigcup\cB$-name, and $A_\alpha$ is such that $A_\alpha\cap\bigcup\cB$ is finite, then $c\cap A_\alpha\notin L[r]$.
\end{proof}
Of course, we can prove directly that $\RR^M$ cannot be well-ordered in $M$, and this argument can be found in the proof of Theorem~2.7 in \cite{Karagila:Bristol}.

We can see \autoref{prop:reals-support} as somehow indicating not only that the reals of $M$ are generated by countable parts of $\cA$, but in fact if $\tup{T_\alpha\mid\alpha<\omega_1}$ is a tower generated by $\cA$, then we actually have that $\RR^M$ is the increasing union of $\RR^{L[c\cap T_\alpha]}$. This was the original approach of the Bristol group.

At this point, one might expect that the decoded sequence is $\tup{c\cap A_\alpha\mid\alpha<\omega_1}$ or somehow $\tup{c\cap T_\alpha\mid\alpha<\omega_1}$. But of course, this is not the case. For starters, we want to somehow ``fuzzy out'' some of the information as to guarantee that $c$ is not constructible from the sequence. So instead of $c\cap A_\alpha$, we will look at $\RR^{L[c\cap A_\alpha]}$. But more importantly, the decoded sequence is not even in $M$. Indeed, if we want this sequence to play the role of the Cohen real in the next step, that means that it needs to be forced into $M$ instead.

Let us begin by understanding $\RR^{L[c\cap A_\alpha]}$. In what way does this set ``fuzzy out'' some information? Well, for one, $c\cap A_\alpha$ is not the obvious real from which we construct this model. Indeed, any finite modification would work, and many more. In fact, any $\pi\in\sG$ for which $\iota(\pi)(\alpha)=\alpha$ will satisfy that $\pi\dot c_{A_\alpha}$ is a name generating the same set of reals, as it is $c\cap A_\alpha$ up to a permutation of $A_\alpha$ and a finite set.

We say that a name $\dot x$ is an \textit{almost $A$-name} if there exists $B$ such that $A=^*B$ and $\dot x$ is a $B$-name. We now define \[\dot R_\alpha=\{\dot x\mid\dot x\text{ is a decent almost }A_\alpha\text{-name}\}^\bullet.\]
\begin{proposition}\label{prop:re-order}
For every $\pi\in\sG$, $\pi\dot R_\alpha=\dot R_{\iota(\pi)(\alpha)}$. In particular, $\dot R_\alpha\in\HS$ for all $\alpha$, and $\{\dot R_\alpha\mid\alpha<\omega_1\}^\bullet\in\HS$ as well.
\end{proposition}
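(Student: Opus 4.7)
The plan is to unwind the definition of the $\bullet$-name and reduce the proposition to a statement about how $\pi$ moves the ``support set'' of a decent name. By definition $\dot R_\alpha=\{\tup{\1,\dot x}\mid\dot x\text{ is a decent almost }A_\alpha\text{-name}\}$, and since $\pi\1=\1$ we have $\pi\dot R_\alpha=\{\tup{\1,\pi\dot x}\mid\dot x\text{ is a decent almost }A_\alpha\text{-name}\}$. So it suffices to show that $\dot x\mapsto\pi\dot x$ restricts to a bijection from the decent almost $A_\alpha$-names onto the decent almost $A_{\iota(\pi)(\alpha)}$-names.

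The key computation is short. If $\dot x$ is a decent $B$-name with $B=^*A_\alpha$, then every element of $\dot x$ has the form $\tup{p,\check\xi}$ with $\dom p\subseteq B$; since the canonical name of an ordinal is built entirely from canonical names of smaller ordinals, a straightforward induction gives $\pi\check\xi=\check\xi$, so $\pi\dot x=\{\tup{\pi p,\check\xi}\mid\tup{p,\check\xi}\in\dot x\}$ and $\dom(\pi p)=\pi``\dom p\subseteq\pi``B$. Because $\pi$ implements $\iota(\pi)$, $\pi``B=^*\pi``A_\alpha=^*A_{\iota(\pi)(\alpha)}$, so $\pi\dot x$ is a decent almost $A_{\iota(\pi)(\alpha)}$-name. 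Applying the same argument to $\pi^{-1}$ yields the reverse inclusion, and hence the main equality.

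For the subsidiary claims, one first observes that every decent almost $A_\alpha$-name is itself hereditarily symmetric. Fix a disjoint refinement $\{B\}$ of $\{\alpha\}$ with $B\subseteq A_\alpha$ and $B=^*A_\alpha$; for any $\pi\in\fix(\{B\})$ and any $\tup{p,\check\xi}\in\dot x$, $\pi p=p$ since $\dom p\subseteq B$, so $\pi\dot x=\dot x$. As each appearing $\check\xi$ is trivially in $\HS$, this gives $\dot x\in\HS$, legitimising the view of $\dot R_\alpha$ as an $\HS$-valued $\bullet$-name. The same $\fix(\{B\})$ also stabilises $\dot R_\alpha$ itself: if $\pi\restriction B=\id$ then $\pi``A_\alpha\supseteq^*B$, so $A_{\iota(\pi)(\alpha)}\cap A_\alpha$ is unbounded, which by almost-disjointness of $\cA$ forces $\iota(\pi)(\alpha)=\alpha$, whence $\pi\dot R_\alpha=\dot R_\alpha$ by the main equality. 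Finally, $\{\dot R_\alpha\mid\alpha<\omega_1\}^\bullet\in\HS$ is immediate, since the main equality shows that each $\pi\in\sG$ merely permutes the indexing, and each $\dot R_\alpha$ is already in $\HS$.

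The only delicate point—hardly a genuine obstacle—is noticing that being a decent almost $A$-name is preserved under $\pi$ precisely because the subnames $\check\xi$ are fixed by every automorphism; this is what lets the entire argument reduce to tracking the domains of conditions and applying the implementation property of the permutable family $\cA$.
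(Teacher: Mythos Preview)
Your proof is correct and follows essentially the same approach as the paper's: both reduce the main equality to the observation that $\pi$ carries $\PP\restriction B$ to $\PP\restriction\pi``B$ and that implementation gives $\pi``A_\alpha=^*A_{\iota(\pi)(\alpha)}$, and then read off the $\HS$ claims from that equality (the paper uses $\fix(\{A_\alpha\})$ where you use $\fix(\{B\})$, which is the same idea).

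There is one small slip worth flagging. When you verify that each individual decent almost $A_\alpha$-name $\dot x$ lies in $\HS$, you fix a single $B\subseteq A_\alpha$ with $B=^*A_\alpha$ and then assert ``$\dom p\subseteq B$'' for conditions $p$ appearing in $\dot x$. But $\dot x$ is only guaranteed to be a $C$-name for \emph{some} $C=^*A_\alpha$, and $C$ need not be contained in your chosen $B$; they may differ by a finite set, so $\dom p\subseteq B$ can fail. The fix is immediate: for each particular $\dot x$, use $\fix(\{C\})$ (with $C$ the set for which $\dot x$ is actually a $C$-name) as the witnessing group. This does not affect the rest of your argument, and in particular your verification that $\fix(\{B\})\subseteq\sym(\dot R_\alpha)$ via the main equality is fine as written.
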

\begin{proof}
Observe that $\pi``\PP\restriction A=\PP\restriction\pi``A$. Therefore if $\iota(\pi)(\alpha)=\beta$ we have that $\pi``A_\alpha=^*A_\beta$, and so an almost $A_\alpha$-name is moved to an almost $A_\beta$-name, so $\pi\dot R_\alpha=\dot R_{\iota(\pi)(\alpha)}$ as wanted. We now have that $\{A_\alpha\}$ is a disjoint approximation for which $\fix(\{A_\alpha\})\subseteq\sym(\dot R_\alpha)$, and thus witnessing that $\dot R_\alpha\in\HS$, and indeed $\sG=\sym(\{\dot R_\alpha\mid\alpha<\omega_1\}^\bullet)$ as wanted.
\end{proof}
Similar arguments as we have seen so far also prove the following statement.
\begin{proposition}\label{prop:sequence-and-inits}
$\tup{\dot R_\alpha\mid\alpha<\omega_1}^\bullet\notin\HS$, but for every countable $I\in[\omega_1]^{<\omega_1}\cap L$, $\tup{\dot R_\alpha\mid\alpha\in I}^\bullet\in\HS$.\qed
\end{proposition}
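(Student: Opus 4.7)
The plan rests on a structural observation: by the $\bullet$-sequence construction, an automorphism $\pi\in\sG$ stabilises $\tup{\dot R_\alpha\mid\alpha\in J}^\bullet$ precisely when $\iota(\pi)\restriction J=\id$. This is because $\pi\tup{\check\alpha,\dot R_\alpha}^\bullet=\tup{\check\alpha,\dot R_{\iota(\pi)(\alpha)}}^\bullet$ by \autoref{prop:re-order}; the first coordinate $\check\alpha$ is untouched, and the names $\dot R_\alpha$ are pairwise distinct (any decent name with infinite support in $A_\alpha\setminus A_\beta$ witnesses $\dot R_\alpha\neq\dot R_\beta$). So the whole question reduces to whether some group in the derived filter is contained in the pointwise stabiliser of the relevant index set under the $\iota$-action.

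For the positive direction, given $I\in[\omega_1]^{<\omega_1}\cap L$, I would take a disjoint approximation $\cB=\{B_\xi\mid\xi\in I\}$. Any $\pi\in\fix(\cB)$ fixes $B_\xi$ pointwise, hence satisfies $\pi``A_\xi=^*A_\xi$ and so $\iota(\pi)(\xi)=\xi$ for all $\xi\in I$. Combined with each $\dot R_\alpha$ being hereditarily symmetric (\autoref{prop:re-order}), this places $\tup{\dot R_\alpha\mid\alpha\in I}^\bullet$ in $\HS$.

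For the negative direction I would show no $\fix(\cB)$ is contained in the stabiliser of the full $\bullet$-sequence. Given $\cB=\{B_\xi\mid\xi\in I\}$, pick $\alpha\neq\beta$ in $\omega_1\setminus I$. The key input, and the main delicate point, is the second clause of the definition of a permutable family: since $\alpha,\beta\notin I$, both $A_\alpha\cap\bigcup\cB$ and $A_\beta\cap\bigcup\cB$ are finite. After removing finitely many elements I extract infinite disjoint $C_\alpha=^*A_\alpha$ and $C_\beta=^*A_\beta$ that are both disjoint from $\bigcup\cB$; letting $\pi$ be the involution on $\omega$ that swaps $C_\alpha$ with $C_\beta$ via some bijection and fixes everything else pointwise, a short almost-disjointness check yields $\pi``A_\xi=^*A_\xi$ for $\xi\notin\{\alpha,\beta\}$ and $\pi``A_\alpha=^*A_\beta$, so $\iota(\pi)=(\alpha\ \beta)$ is a bounded permutation of $\omega_1$. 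Thus $\pi\in\fix(\cB)\cap\sG$ and $\pi$ moves the full sequence, giving the required contradiction. The argument hinges precisely on the interplay between the ``outside'' control clause of a permutable family and the pointwise fixing demand of $\fix(\cB)$.
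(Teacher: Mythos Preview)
Your proof is correct and is precisely the argument the paper has in mind: the paper does not spell out a proof here but simply indicates that ``similar arguments as we have seen so far'' establish the result, and your write-up is exactly that elaboration---using \autoref{prop:re-order} to reduce stabilisation of the $\bullet$-sequence to $\iota(\pi)\restriction J=\id$, witnessing the positive part via a disjoint approximation indexed by $I$, and refuting the negative part by producing a $2$-cycle in $\fix(\cB)$ from the permutable-family clause. The only cosmetic remark is that the distinctness of the names $\dot R_\alpha$ is immediate (e.g.\ $\dot c_{A_\alpha}$ appears in $\dot R_\alpha$ but not in $\dot R_\beta$), so you need not dwell on it.
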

And here we arrive to the key point. Let $\varrho$ denote the sequence $\tup{R_\alpha\mid\alpha<\omega_1}$ and let $\dot\varrho$ denote its name. We will also write $\varrho_I$ and $\dot\varrho_I$ for the restriction of the sequence to $I$, similar to $\dot c_A$.\footnote{We will never use $\varrho_\eta$ to denote $R_\eta$, so there will not be any confusion in those cases. That is, $\varrho_\eta$ will only ever be used to denote the restriction of the sequence.} Indeed, $\varrho$ is going to be the decoded sequence, and it is of course going to be $M$-generic, but we need to find a suitable partial order. \autoref{prop:sequence-and-inits} provides us with a good clue: every initial segment of this sequence is in fact in $M$, so we can ``safely'' approximate this sequence.

It will be somewhat more convenient to use subsets of $\omega_1$, as we did with the Cohen forcing, rather than proper initial segments. This makes it easier to talk about $A$-names and almost $A$-names. And again for convenience (and so we can claim productivity, of course), we are also going to limit ourselves to subsets of $\omega_1$ which are already in $L$.

\begin{proposition}
Let $\dot\QQ$ denote $\{\pi\dot\varrho_I\mid\pi\in\sG, I\in[\omega_1]^{<\omega_1}\}^\bullet$, ordered by reverse inclusion. Then $\dot\QQ\in\HS$, and indeed $\forces^\HS\dot\varrho$ is $\HS$-generic.
\end{proposition}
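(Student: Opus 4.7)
The proposition has two parts. For $\dot\QQ\in\HS$, I would check the symmetry directly: for any $\sigma\in\sG$ we have $\sigma\dot\QQ=\{(\sigma\pi)\dot\varrho_I\mid\pi\in\sG,I\in[\omega_1]^{<\omega_1}\}^\bullet=\dot\QQ$, since $\sigma\sG=\sG$, so $\sym(\dot\QQ)=\sG\in\sF$. For hereditary symmetry, each $\dot\varrho_I$ with bounded $I$ is in $\HS$ by \autoref{prop:sequence-and-inits}, and $\HS$ is closed under the $\sG$-action, so each $\pi\dot\varrho_I\in\HS$ as well.

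For genericity, by the forcing theorem it suffices to show that for every $\dot D\in\HS$ with $\1\forces^\HS\dot D$ dense in $\dot\QQ$ and every $p_0\in\PP$, there exist $q\leq p_0$ and bounded $I_0\subseteq\omega_1$ with $q\forces^\HS\dot\varrho_{I_0}\in\dot D$. Fix a disjoint approximation $\cB_0$ with $\fix(\cB_0)\subseteq\sym(\dot D)$. Applying density to the trivial condition $\dot\varrho_\emptyset$ together with the forcing theorem, I obtain $q_1\leq p_0$, $\pi\in\sG$, and bounded $I\subseteq\omega_1$ with $q_1\forces^\HS\pi\dot\varrho_I\in\dot D$. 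The goal is now to produce an automorphism $\rho\in\sG$ satisfying (i)~$\rho\in\fix(\cB_0)$, so that $\rho\dot D=\dot D$; (ii)~$\rho\restriction\dom(p_0)=\id$, so that $\rho q_1$ is compatible with $p_0$; and (iii)~$\iota(\rho)$ agrees with $\iota(\pi)^{-1}$ on $\iota(\pi)(I)$, so that $\rho\pi\dot\varrho_I=\dot\varrho_I$. Granting such $\rho$, the Symmetry Lemma gives $\rho q_1\forces^\HS\dot\varrho_I\in\dot D$, and any common extension $q\leq p_0,\rho q_1$ witnesses the claim with $I_0=I$.

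The main obstacle is constructing $\rho$, and it relies crucially on $\cA$ being \emph{permutable} rather than merely almost disjoint. The target permutation $\iota(\rho)$ is bounded (it is supported in $I\cup\iota(\pi)(I)$), so permutability provides some $\rho\in\sG$ implementing it; a witnessing $\rho$ is assembled from order isomorphisms between the sets of a disjoint refinement of $\{A_\xi\mid\xi\in I\cup\iota(\pi)(I)\}$. The additional constraints (i) and (ii) amount to choosing this disjoint refinement so as to avoid $\bigcup\cB_0\cup\dom(p_0)$, which is possible because $\dom(p_0)$ is finite and each relevant $A_\xi$ meets $\bigcup\cB_0$ in a bounded set (as $\cB_0$ is a disjoint approximation and the family is almost disjoint). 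These manipulations are of the same flavour as those used in the proof of \autoref{prop:reals-support}.
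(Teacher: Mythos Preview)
Your argument for $\dot\QQ\in\HS$ is correct and matches the paper's reasoning.

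For genericity, there is a genuine gap in the construction of $\rho$. You need $\rho\in\fix(\cB_0)$ and $\iota(\rho)$ to agree with $\iota(\pi)^{-1}$ on $\iota(\pi)(I)$. But any $\rho\in\fix(\cB_0)$ must satisfy $\iota(\rho)\restriction\dom\cB_0=\id$: if $\xi\in\dom\cB_0$ then $B_\xi=^*A_\xi$ is fixed pointwise by $\rho$, forcing $\iota(\rho)(\xi)=\xi$. So if $\iota(\pi)$ happens to move some element of $I$ into $\dom\cB_0$, or some element of $\dom\cB_0\cap I$ out of it, no $\rho\in\fix(\cB_0)$ can undo this. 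Your justification that ``each relevant $A_\xi$ meets $\bigcup\cB_0$ in a bounded set'' is exactly what fails: it holds only for $\xi\notin\dom\cB_0$, and you have no control over whether $I\cup\iota(\pi)(I)$ meets $\dom\cB_0$, since you started from $\dot\varrho_\varnothing$ and took an arbitrary extension into $\dot D$.

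The paper's fix is the missing idea: instead of extending $\dot\varrho_\varnothing$, first pick $\alpha<\omega_1$ with $\dom\cB_0\subseteq\alpha$ and extend $\dot\varrho_\alpha$ into $\dot D$. Any such extension has the form $\pi\dot\varrho_A$ with $\alpha\subseteq A$ and $\iota(\pi)\restriction\alpha=\id$ (since it must agree with $\dot\varrho_\alpha$ on $\alpha$). Now $\iota(\pi)^{-1}$ is the identity on $\alpha\supseteq\dom\cB_0$, so the correcting automorphism $\tau$ with $\iota(\tau)=\iota(\pi)^{-1}$ can be found inside $\fix(\cB_0)$, and can moreover be chosen with $\tau q=q$ by avoiding the finite $\dom q$. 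The paper then uses the c.c.c.\ to find a single $\eta$ with $p\forces\dot\varrho_\eta\in\dot D$, which is stronger than what you aimed for but uses the same core mechanism once the starting point is corrected.
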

In other words, $\varrho$ is $M$-generic for $\QQ$. The idea behind $\QQ$ is that we want the smallest ``reasonable'' set which contains our generic filter (i.e.\ partial approximations of $\varrho$), and the easiest way to do that is to simply apply all permutations and obtain a set. But the true intuition behind $\QQ$, and really behind the whole decoding apparatus, comes from understanding $\pi\varrho_I$.

The model $M$ knows of the set $R=\{R_\alpha\mid\alpha<\omega_1\}$, it just does not know a well-ordering of this set. And we are trying to remedy that. As we know already every $\pi\in\sG$ implements a permutation of $\omega_1$, which induces a permutation of $R$ moving countably many points. So $\pi$ shuffles $R$ and thus modifies the range of $\varrho_I$. But $R$ is an extremely impoverished set as far as $M$ is concerned. This is not particularly important for the construction, and can be skipped entirely, but it is an interesting fact.

\begin{proposition}\label{prop:R-amorphous}
$M\models R$ is a strongly $\aleph_1$-amorphous set. That is, $R$ cannot be written as a union of two uncountable sets, and every uncountable partition of $R$ has at most countably many non-singleton cells.
\end{proposition}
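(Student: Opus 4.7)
Both clauses are proved by one symmetry argument, so I would treat them in parallel. Write $\dot R=\{\dot R_\alpha\mid\alpha<\omega_1\}^\bullet$, and, for any $\HS$-named structure on $R$ (either a bipartition or an equivalence relation), pick a witnessing support $\fix(\cB)$ with $\cB=\{B_\xi\mid\xi\in I\}$ a disjoint approximation indexed by some countable $I\in[\omega_1]^{<\omega_1}$. The whole argument then hinges on a single transposition lemma: for any such $\cB$, any distinct $\alpha,\beta\in\omega_1\setminus I$, and any finite $E\subseteq\omega$, there is $\pi\in\fix(\cB)$ with $\iota(\pi)=(\alpha\ \beta)$ and $\pi\restriction E=\id$. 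This is where the real work lives. I would prove it directly: by the defining property of a disjoint approximation, $A_\alpha\cap\bigcup\cB$ and $A_\beta\cap\bigcup\cB$ are both finite (because $\alpha,\beta\notin I$), and $A_\alpha\cap A_\beta$ is finite by almost disjointness, so taking $\pi$ to be any bijection between $A_\alpha$ and $A_\beta$ which is the identity on the finite ``forbidden'' set $E\cup\bigcup\cB\cup(A_\alpha\cap A_\beta)$ and the identity outside $A_\alpha\cup A_\beta$ does the job: such a $\pi$ lies in $\fix(\cB)$, fixes $E$ pointwise, and implements $(\alpha\ \beta)$.

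For the first clause, assume $R=X\sqcup Y$ in $M$ with both $X$ and $Y$ uncountable, and pick $\dot X\in\HS$ naming $X$ with $\fix(\cB)\subseteq\sym(\dot X)$; then $\fix(\cB)\subseteq\sym(\dot R\setminus\dot X)$ as well. Because $\{R_\xi\mid\xi\in I\}$ is countable, both $J_X=\{\alpha\mid R_\alpha\in X\}$ and $J_Y=\{\alpha\mid R_\alpha\in Y\}$ meet $\omega_1\setminus I$. Pick $\alpha\in J_X\setminus I$ and $\beta\in J_Y\setminus I$, and let $q\in G$ force $\dot R_\alpha\in\dot X$ and $\dot R_\beta\in\dot R\setminus\dot X$. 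The transposition lemma applied with $E=\dom q$ produces $\pi\in\fix(\cB)$ with $\pi q=q$ and $\iota(\pi)=(\alpha\ \beta)$; by the Symmetry Lemma and \autoref{prop:re-order} we then get $q\forces\dot R_\beta\in\dot X$, which is the desired contradiction.

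For the second clause, let $\dot\sim\in\HS$ name a partition of $R$ (viewed as an equivalence relation) with $\fix(\cB)\subseteq\sym(\dot\sim)$, and suppose some non-singleton cell $C$ contains $R_\alpha$ and $R_\beta$ with $\alpha\neq\beta$. After possibly swapping labels we may assume $\beta\notin I$. For any $\gamma\in\omega_1\setminus(I\cup\{\alpha,\beta\})$, applying the transposition lemma with $E=\dom q$ for some $q\in G$ forcing $R_\alpha$ and $R_\beta$ to lie in the same cell yields $\pi\in\fix(\cB)$ fixing $q$ and implementing $(\beta\ \gamma)$, and the Symmetry Lemma then forces $R_\alpha$ and $R_\gamma$ to lie in the same cell. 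Hence $C\supseteq\{R_\gamma\mid\gamma\in\omega_1\setminus I\}\cup\{R_\alpha\}$. At most one cell can contain this cofinite tail, so every other non-singleton cell is a subset of the countable set $\{R_\xi\mid\xi\in I\}$; since these cells are pairwise disjoint, there are at most countably many of them. The only delicate step in the whole proof is the transposition lemma; after that both clauses are direct applications of the Symmetry Lemma.
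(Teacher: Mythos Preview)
Your proof is correct and uses the same core device as the paper: a transposition lemma letting you swap $\dot R_\alpha$ and $\dot R_\beta$ for $\alpha,\beta\notin I$ while fixing any given condition. The only organisational difference is in the partition clause: the paper argues that swapping two outside-$I$ indices lying in \emph{different} cells forces both cells to be singletons (since the swap moves only two points yet must permute the cells), so under the uncountable-partition hypothesis every outside-$I$ index is a singleton; you instead show that any non-singleton cell meeting outside-$I$ must absorb the entire tail, so there is at most one such cell. Both routes yield the statement. Two cosmetic slips worth tightening: in your transposition lemma the ``forbidden set'' should be $E\cup\bigl((A_\alpha\cup A_\beta)\cap\bigcup\cB\bigr)\cup(A_\alpha\cap A_\beta)$, since $\bigcup\cB$ itself is infinite; and ``after swapping labels we may assume $\beta\notin I$'' tacitly presumes the cell $C$ meets outside-$I$ --- if it does not, $C\subseteq\{R_\xi\mid\xi\in I\}$ already and no argument is needed.
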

\begin{proof}
  Suppose that $\dot X,\dot Y\in\HS$ and $p\forces^\HS``\dot X,\dot Y\subseteq\dot R$ and are uncountable''. Let $\cB$ be a disjoint approximation such that $\fix(\cB)\subseteq\sym(\dot X)\cap\sym(\dot Y)$ and such that $\dom p\subseteq\bigcup\cB$. By uncountability, we can extend $p$ to some $q$ for which there are $\alpha,\beta$ such that:
  \begin{enumerate}
  \item $q\forces^\HS \dot R_\alpha\in\dot X$ and $\dot R_\beta\in\dot Y$.
  \item $A_\alpha\cap\bigcup\cB$ and $A_\beta\cap\bigcup\cB$ are finite.
  \end{enumerate}
  By enlarging one of the sets in $\cB$, if necessary, we can also assume that $\dom q\subseteq\bigcup\cB$ as well. We can now find a permutation $\pi\in\fix(\cB)$ such that $\iota(\pi)$ is the $2$-cycle switching $\alpha$ and $\beta$.

  Applying $\pi$ to the first property of $q$ we have that $\pi q\forces^\HS\pi\dot R_\alpha\in\pi\dot X,\pi\dot R_\beta\in\pi\dot Y$. But since $\pi\in\fix(\cB)$ we have that $\pi q=q$ and $\pi\dot X=\dot X$ and $\pi\dot Y=\dot Y$. This means that $q\forces\dot R_\beta\in\dot X$ and $\dot R_\alpha\in\dot Y$. In particular $q$ forces that $\dot X$ and $\dot Y$ are not disjoint.

  Next we want to prove that every partition of $R$ is almost entirely singletons. We will only sketch the idea behind the argument. If $\dot S\in\HS$ and $p\forces^\HS``\dot S$ is a partition of $\dot R$ into uncountably many cells'', let $\cB$ be an approximation such that $\fix(\cB)\subseteq\sym(\dot S)$. Pick $\alpha,\beta$ as above, so that we may switch between them without interfering with $\cB$, and we can implement the $2$-cycle $(\alpha\ \beta)$ without changing any given condition. This means that any point that was in the same cell as $\alpha$ must have moved to the cell containing $\beta$. But we only moved two points, so $\alpha$ and $\beta$ must have been isolated as singletons. And therefore the only non-singleton cells come from a partition of $\cB$ itself.
\end{proof}
\begin{remark}
  The above implies that every permutation of $R$ in $M$ only moves countably many points, of course, as the orbits define a partition. Some of these are new, as they can be encoded by some generic real, but this is irrelevant. We can prove that every permutation of $R$ in $M$ only moves countably many points with a direct argument in the style of \autoref{thm:example-ccc}: given a name for a permutation $\dot f$ and $\cB$ a disjoint approximation such that $\fix(\cB)\subseteq\sym(\dot f)$, the c.c.c.\ condition ensures that there is $\delta$ such that for any $\alpha$ for which $A_\alpha\cap\bigcup\cB$ is infinite, $\dot R_\delta$ is not in the same orbit as $\dot R_\alpha$. But now we can utilise the same strategy as we did before and move $\alpha$ to some other ordinal with a similar property, and therefore showing that either $f$ is the identity on a cocountable set, or it is constant there.
\end{remark}

Getting back to the matter at hand, we want to prove that $\varrho$ is $M$-generic. Namely, if $D\subseteq\QQ$ is a dense subset and $D\in M$, we want to prove that there is some $\alpha<\omega_1$ such that $\varrho_\alpha\in D$. In \cite{Karagila:Bristol} we prove this by proving a technical lemma about names of dense open sets, Lemma~2.12.\footnote{There is a minor mistake in the statement of the original lemma, see \autoref{correction:2.12} for details and corrections.} In the paper we use this lemma also as a means for proving that $\QQ$ is $\sigma$-distributive, and therefore does not add any new reals to the model (and as a corollary, it does not force $\AC$ back into the universe somehow), which also finishes the proof that $\varrho$ is indeed the sequence we are looking for.

We will prove the genericity of $\varrho$ using a simplified version of Lemma~2.12, and provide a separate argument for the distributivity.

\begin{proposition}\label{prop:genericity-1}
$\varrho$ is $M$-generic. In other words, if $\dot D\in\HS$ and $p\forces``\dot D$ is a dense open subset of $\dot\QQ$'', then there is some $\eta$ such that $p\forces\dot\varrho_\eta\in\dot D$.
\end{proposition}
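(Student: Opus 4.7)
The plan is to combine a symmetry argument with the countable chain condition of $\PP$. First, set things up: fix a disjoint approximation $\cB$ with index set $I\in[\omega_1]^{<\omega_1}$ witnessing $\fix(\cB)\subseteq\sym(\dot D)$, and by absorbing finitely many extra points into some $B_\xi$ we may assume $\dom p\subseteq\bigcup\cB$; then fix an ordinal $\eta_0>\sup I$ to serve as a safe boundary past the support of both $p$ and $\dot D$.

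The main density step will show that for every $s\leq p$ there is $r\leq s$ and $\eta_s<\omega_1$ with $r\forces^\HS\dot\varrho_{\eta_s}\in\dot D$. Given $s$, the density of $\dot D$ together with the fact that $\dot\varrho_{\eta_0}\in\dot\QQ$ yields an extension $s'\leq s$ deciding a name $\pi\dot\varrho_J\in\dot D$ which extends $\dot\varrho_{\eta_0}$ as a condition of $\dot\QQ$; this extension requirement forces $J\supseteq\eta_0$ and $\iota(\pi)\restriction\eta_0=\id$. Let $\zeta$ be such that $\iota(\pi)$ is the identity above $\zeta$ and enlarge $J$ to the $\iota(\pi)$-invariant set $J^*=J\cup[\eta_0,\zeta)$; by openness $s'\forces^\HS\pi\dot\varrho_{J^*}\in\dot D$. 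Now define the bounded permutation $\Pi$ of $\omega_1$ by $\Pi=\iota(\pi)^{-1}$ on $J^*$ and $\Pi=\id$ elsewhere (well defined since $J^*$ is $\iota(\pi)$-invariant), and use the permutable family structure to find $\tilde\pi\in\fix(\cB)$ implementing $\Pi$ and pointwise-fixing the finite set $\dom s\setminus\bigcup\cB$. Consistency of the first two constraints follows from $I\subseteq\eta_0$ and $\Pi\restriction\eta_0=\id$. Because $\tilde\pi$ fixes $\dom s$ pointwise, $\tilde\pi s'$ extends $s$, and by the Symmetry Lemma $\tilde\pi s'\forces^\HS\tilde\pi\pi\dot\varrho_{J^*}\in\tilde\pi\dot D=\dot D$. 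By construction $\iota(\tilde\pi\pi)\restriction J^*=\id$, so $\tilde\pi\pi\dot\varrho_{J^*}=\dot\varrho_{J^*}$; letting $\eta_s=\sup J^*+1$, openness once more gives $\tilde\pi s'\forces^\HS\dot\varrho_{\eta_s}\in\dot D$.

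Having secured density, invoke the c.c.c.\ of $\PP$ to choose a maximal antichain $\{r_n\mid n<\omega\}$ below $p$ with each $r_n\forces^\HS\dot\varrho_{\eta_n}\in\dot D$. Setting $\eta=\sup_n\eta_n+1<\omega_1$, openness of $\dot D$ yields $r_n\forces^\HS\dot\varrho_\eta\in\dot D$ for every $n$, and maximality of the antichain below $p$ then delivers $p\forces^\HS\dot\varrho_\eta\in\dot D$, as required. The main obstacle lies in the construction of $\tilde\pi$: ensuring that a single permutation of $\omega$ simultaneously belongs to $\fix(\cB)$, implements the prescribed bounded permutation of $\omega_1$, and pointwise-fixes the additional finite set $\dom s\setminus\bigcup\cB$. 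Verifying that the restriction of the permutable family to $\omega\setminus(\bigcup\cB\cup(\dom s\setminus\bigcup\cB))$ still implements every bounded permutation of $\omega_1\setminus I$ is essentially the content of the technical Lemma~2.12 of \cite{Karagila:Bristol}, and is the one place where the full strength of the permutable family (rather than a mere almost disjoint family) is used.
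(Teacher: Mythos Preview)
Your proof is correct and follows essentially the same route as the paper's: fix a disjoint approximation $\cB$ supporting $\dot D$, pick a threshold past $\dom\cB$, show that any witness $\pi\dot\varrho_A$ extending $\dot\varrho$ past that threshold can be ``straightened'' to $\dot\varrho_A$ by an automorphism in $\fix(\cB)$, and then use the c.c.c.\ of $\PP$ to bound the countably many resulting ordinals. The only differences are cosmetic. The paper takes $\iota(\tau)=\iota(\pi)^{-1}$ globally and arranges $\tau q=q$, whereas you restrict $\Pi$ to the invariant set $J^*$ and only demand that $\tilde\pi s'$ extend $s$; both work, and your enlargement of $J$ to $J^*$ is an unnecessary (though harmless) complication once you observe that implementing $\iota(\pi)^{-1}$ everywhere off $\bigcup\cB\cup\dom s$ is already available. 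Your closing remark correctly identifies the one nontrivial point both arguments share: that after deleting $\bigcup\cB$ and finitely many more points one can still implement any bounded permutation of $\omega_1$ fixing $I$, which is exactly where permutability (as opposed to mere almost disjointness) is used.
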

\begin{proof}
  Let $\dot D$ be a name as above, and let $\cB$ be a disjoint approximation such that $\fix(\cB)\subseteq\sym(\dot D)$. Let $p\forces``\dot D$ is a dense open subset of $\dot\QQ$''. Let $\alpha$ be large enough such that if $A_\xi\cap\bigcup\cB$ is infinite, then $\xi<\alpha$. Our strategy is to find ``enough'' extensions of $\dot\varrho_\alpha$ so that one of them will be both $\dot\varrho_\eta$, and in $\dot D$.

  First we prove the following claim: suppose that $q\leq p$ and $q\forces\pi\dot\varrho_A\in\dot D$, where $\alpha\subseteq A$ and $\iota(\pi)\restriction\alpha=\id$, then $q\forces\dot\varrho_A\in\dot D$. In other words, if $\pi\dot\varrho_A$ is an extension of $\dot\varrho_\alpha$ which lies in $\dot D$, then $\dot\varrho_A$ is in $\dot D$ as well. Of course, this is true because there is some $\tau\in\fix(\cB)$ such that $\iota(\tau)=\iota(\pi)^{-1}$ and $\tau q=q$.

As $\pi$ implemented the identity up to $\alpha$, $\iota(\pi)^{-1}$ will also be the identity up to $\alpha$, and thus we can implement it using an automorphism in $\fix(\cB)$ which fixes $q$, since $\dom q$ is a finite set.

This completes the proof of the claim since \[\tau q=q\forces\tau\pi\dot\varrho_A=\dot\varrho_{\iota(\tau)\circ\iota(\pi)``A}=\dot\varrho_A\in\tau\dot D=\dot D.\]
In turn, this is enough to prove the genericity of $\varrho$: find a maximal antichain below $p$ of conditions which decide some $\pi\dot\varrho_A$ as above, and by openness we can assume each such $A$ is in fact an ordinal. Let $\eta$ be the supremum of this countable set of ordinals, and we have that $p\forces\dot\varrho_\eta\in\dot D$.
\end{proof}

The final claim is that $\QQ$ does not add new reals to $M$. This, as we remarked, ensures that $M[\varrho]\neq L[c]$. It has an added effect that $\QQ$ is not adding any new sets of ordinals, or any countable sequences of ground model objects. In \cite{Karagila:Bristol} the proof utilised a stronger version of the above proof which lets us intersect a countable sequence of dense open sets. Here we take a slightly different approach.\footnote{The approach we take here is mentioned in a remark at the end of \S2 of \cite{Karagila:Bristol}.}

\begin{proposition}\label{prop:dist-1}
$M\models``\QQ$ is $\sigma$-distributive''. Namely, given $\tup{D_n\mid n\in\omega}\in M$ such that each $D_n$ is a dense open subset of $\QQ$, then $\bigcap_{n\in\omega}D_n$ is dense.
\end{proposition}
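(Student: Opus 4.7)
The plan is to prove density of $\bigcap D_n$ by iterating the argument of \autoref{prop:genericity-1} through $\omega$ stages, the key new ingredient being that a single disjoint approximation can support the entire countable sequence $\langle D_n \mid n<\omega\rangle$ simultaneously. This replaces, in a very mild way, the c.c.c.-based closure argument used in \cite{Karagila:Bristol} for Lemma~2.12.

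Work in $V$. Let $\dot{\vec D}\in\HS$ be a name for the sequence, with $p \forces^\HS ``\dot{\vec D}$ is a sequence of dense open subsets of $\dot\QQ$''. A condition in $\dot\QQ$ has the form $\pi\dot\varrho_I$; by applying the symmetry $\pi^{-1}$ to the whole setup we may reduce to the case where the starting condition is $\dot\varrho_J$ for some countable $J\in V$. Choose a disjoint approximation $\cB$ with $\fix(\cB) \subseteq \sym(\dot{\vec D})$; such $\cB$ exists because $\sF$ is generated by groups of this form. Every $\tau \in \fix(\cB)$ then also fixes each coordinate $\dot D_n$, since $\tau\check n = \check n$. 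Enlarging $J$ if necessary, we may assume it contains every $\alpha<\omega_1$ with $A_\alpha\cap\bigcup\cB$ infinite, a countable enlargement by the definition of a permutable family.

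Now build recursively in $V$ an increasing sequence $J = J_{-1} \subseteq J_0 \subseteq J_1 \subseteq \cdots$ of countable subsets of $\omega_1$ with $p \forces^\HS \dot\varrho_{J_n} \in \dot D_n$ for every $n$. Given $J_{n-1}$, density of $\dot D_n$ below $p$ produces extensions $p' \leq p$ forcing some $\pi\dot\varrho_A \in \dot D_n$ with $A \supseteq J_{n-1}$ and $\iota(\pi)\restriction J_{n-1} = \id$; these constraints come from $\pi\dot\varrho_A$ extending $\dot\varrho_{J_{n-1}}$ as a partial function. The internal claim in the proof of \autoref{prop:genericity-1}, applied with our uniform $\cB$ and with $J_{n-1}$ in the role of the ordinal $\alpha$ there, promotes each such $p'$ to $p' \forces^\HS \dot\varrho_A \in \dot D_n$. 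A maximal antichain below $p$ of such conditions is countable by the c.c.c.\ of $\PP$, and the union of the associated sets $A$ defines $J_n \in [\omega_1]^{<\omega_1}$ with $p \forces^\HS \dot\varrho_{J_n} \in \dot D_n$.

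Finally, let $J_\omega = \bigcup_{n < \omega} J_n$, a countable subset of $\omega_1$ in $V$. Then $\dot\varrho_{J_\omega} \in \dot\QQ$ extends every $\dot\varrho_{J_n}$, so openness of each $\dot D_n$ yields $p \forces^\HS \dot\varrho_{J_\omega} \in \bigcap_{n < \omega} \dot D_n$. The main obstacle is securing, in the second step, a disjoint approximation $\cB$ that simultaneously supports every coordinate of $\dot{\vec D}$; without this, the recursion would be forced to shrink the support at each stage and the claim in \autoref{prop:genericity-1} would no longer apply uniformly. That $\dot{\vec D}$ is a single name in $\HS$ — rather than a sequence of individually symmetric names — is exactly what supplies this uniform $\cB$ and lets the one-step genericity argument be iterated honestly through $\omega$ stages.
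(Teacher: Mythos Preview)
Your proof is correct, but it follows a genuinely different route from the paper's. The paper argues by absoluteness: in $L[c]$ the forcing $\QQ$ is naturally isomorphic to $\Add(\omega_1,1)^L$, which is $\sigma$-closed in $L$ and hence $\sigma$-distributive in $L[c]$ (Easton's lemma, using that Cohen forcing is c.c.c.); the sequence $\tup{D_n\mid n<\omega}$ lies in $M\subseteq L[c]$, so the intersection is dense in $L[c]$, and density is downward absolute to $M$. Your argument instead iterates the internal claim of \autoref{prop:genericity-1} through $\omega$ stages, the crucial observation being that a \emph{single} disjoint approximation $\cB$ supports the entire name $\dot{\vec D}$ and therefore (up to forced equality) each $\dot D_n$ uniformly, so that the ``undo $\pi$ via some $\tau\in\fix(\cB)$'' manoeuvre is available at every stage; the c.c.c.\ of $\PP$ enters only to keep each $J_n$ countable. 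This is essentially the strategy behind Lemma~2.12 of \cite{Karagila:Bristol}, which the present paper deliberately replaces (see the footnote immediately preceding \autoref{prop:dist-1}). The paper's approach is much shorter and ports directly to the general successor step in \autoref{prop:dist-succ}; yours is more self-contained, avoiding the external identification $\QQ\cong\Add(\omega_1,1)^L$ in $L[c]$, and would survive in variants of the construction where no such clean isomorphism is available.
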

\begin{proof}
In $L[c]$, $\QQ$ is naturally isomorphic to $\Add(\omega_1,1)^L$. In $L$ this forcing is $\sigma$-closed, and so in $L[c]$ it is still distributive. If $\tup{D_n\mid n\in\omega}$ is a sequence of dense open sets, then its intersection is still dense in $L[c]$, and therefore in $M$.
\end{proof}

\subsection{Outline of successor steps}
The first two steps are fairly indicative of the standard successor step. The main change, of course, is that we need to understand what replaces $\RR$, $L$, and $c$. We have a hint as to what replaces $c$, namely, the sequence $\varrho$ that we ended up with after forcing with $\QQ$. We also have an idea on what to replace $\RR$ with, that would be $\power(\RR)$, and $L$ is to be replaced by $M$ itself.

We can make this much clearer if we recast the example above by replacing $\RR$ with $V_{\omega+1}$. We can also replace $c$ with a sequence of elements of $V_\omega$, of course, but this seems to needlessly complicate things. After all, the case of $\omega$ is separate anyway.

For a more uniform approach, we denote by $M_\alpha$ the $\alpha$th step in the construction, which is a model of $\ZF$ intermediate between $L$ and $L[c]$. We will also write $\varrho_\alpha$ to denote the generic sequence for $\QQ_\alpha$, the $\alpha$th forcing. So $\QQ_0$ is Cohen forcing and $\varrho_0$ is $c$ itself.

At each successor step we have $M_{\alpha+1}$ defined from $\varrho_\alpha$, which was the $M_\alpha$-generic sequence for $\QQ_\alpha$. We will assume that while $\varrho_\alpha\notin M_{\alpha+1}$, for every $\xi<\omega_{\alpha+1}$, $\varrho_\alpha\restriction A_\xi^\alpha\in M_{\alpha+1}$, where $A^\alpha_\xi$ is the $\xi$th member of the permutable family we fixed in advance. Moreover, for every $I\in[\omega_{\alpha+1}]^{<\omega_{\alpha+1}}\cap L$, $\tup{\varrho_\alpha\restriction A^\alpha_\xi\mid\xi\in I}$ is in $M_{\alpha+1}$.

We now want to define $\QQ_{\alpha+1}$ and $\varrho_{\alpha+1}$. For this we replace $\RR^{L[c\cap A_\xi]}$ that we had in the first step with $V_{\omega+\alpha+2}^{M_{\alpha+1}[\varrho_\alpha\restriction A_\xi^\alpha]}$, let us denote this as $R_\xi$ for now. The rest is more or less the same as above, relying, of course, on the recursive fact that any previous $M_\beta$ were defined much in the same way as we are defining $\QQ_{\alpha+1}$, $\varrho_{\alpha+1}$, and $M_{\alpha+2}$.

Let $\QQ_{\alpha+1}$ denote the set of approximations of $\varrho_{\alpha+1}=\tup{R_\xi\mid\xi<\omega_{\alpha+1}}$ whose domains are in $L$. We are being vague, of course, as to what counts as ``approximation'' in this context. The idea is that we may permute the different $R_\xi$ amongst themselves using a permutation of $\omega_{\alpha+1}$ which is coming from $L$.

The lemmas in the general case are exactly the same as we had before. The genericity of $\varrho_{\alpha+1}$ is proved by the same argument as \autoref{prop:genericity-1}, and while the distributivity argument is also similar, it is worth writing down.

\begin{proposition}\label{prop:dist-succ}
$M_{\alpha+1}\models\QQ_{\alpha+1}$ is ${\leq}|V_\alpha^{M_{\alpha+1}}|$-distributive. In particular, no new sets of rank $\alpha+1$ are added.
\end{proposition}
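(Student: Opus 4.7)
The plan is to mirror the proof of \autoref{prop:dist-1}. We work in the ambient $\ZFC$-model $L[c]$, show that $\QQ_{\alpha+1}$ is equivalent there to a highly closed forcing coming from $L$, transfer the closure to distributivity across the c.c.c.\ extension $L\subseteq L[c]$, and finally descend to $M_{\alpha+1}$ via the downward absoluteness of denseness in a fixed poset.

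In more detail: in $L[c]$ the full sequence $\varrho_{\alpha+1}=\tup{R_\xi\mid\xi<\omega_{\alpha+1}}$ is available, and any condition $\pi\dot\varrho_{\alpha+1,I}\in\QQ_{\alpha+1}$ interprets in $L[c]$ as the partial function $\xi\mapsto R_{\iota(\pi)(\xi)}$ on $I$. Since the sets $R_\zeta$ are pairwise distinct, this interpretation depends only on the pair $(I,\iota(\pi)\restriction I)$, a partial injection from $\omega_{\alpha+1}$ to $\omega_{\alpha+1}$ whose domain lies in $[\omega_{\alpha+1}]^{<\omega_{\alpha+1}}\cap L$. This yields (in $L[c]$) a dense embedding of $\QQ_{\alpha+1}$ into an $L$-forcing $\QQ'$ whose conditions are exactly such partial injections, ordered by extension. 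In $L$, the forcing $\QQ'$ is $<\omega_{\alpha+1}$-closed: the union of a descending chain of length $<\omega_{\alpha+1}$ of such partial injections is again a partial injection, and its domain still lies in $[\omega_{\alpha+1}]^{<\omega_{\alpha+1}}$ by the regularity of $\omega_{\alpha+1}$ in $L$.

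By the standard c.c.c.\ preservation argument applied to $L\subseteq L[c]$, the forcing $\QQ'$ remains $<\omega_{\alpha+1}$-distributive in $L[c]$. Since $\GCH$ holds in $L$ and cardinals are preserved in $L[c]$, a routine induction along the construction yields $|V_\alpha^{M_{\alpha+1}}|<\omega_{\alpha+1}$ (computed in any of the models in which $V_\alpha^{M_{\alpha+1}}$ makes sense), so $\QQ_{\alpha+1}$ is in particular $\leq|V_\alpha^{M_{\alpha+1}}|$-distributive in $L[c]$. A sequence of dense open subsets of $\QQ_{\alpha+1}$ of length $\leq|V_\alpha^{M_{\alpha+1}}|$ taken in $M_{\alpha+1}$ also lies in $L[c]$, and being dense open is absolute between transitive models containing the poset, so its intersection is dense in $L[c]$ and hence dense in $M_{\alpha+1}$. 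The ``in particular'' clause then follows from the standard fact that a $\kappa$-distributive forcing adds no new subsets of $\kappa$: every set of rank $\alpha+1$ in $M_{\alpha+2}$ is coded by a subset of $V_\alpha^{M_{\alpha+1}}$, and hence by a subset of the cardinal $|V_\alpha^{M_{\alpha+1}}|$.

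The main obstacle will be pinning down the identification $\QQ_{\alpha+1}\hookrightarrow\QQ'$ cleanly: one must check that the assignment $\pi\dot\varrho_{\alpha+1,I}\mapsto(I,\iota(\pi)\restriction I)$ is well-defined (distinct conditions forced to the same interpretation give the same pair), and that its image is dense in $\QQ'$, so that $<\omega_{\alpha+1}$-closure on the $L$-side really produces genuine common extensions on the $\QQ_{\alpha+1}$-side rather than ghost conditions outside the range of the embedding. The secondary bookkeeping step $|V_\alpha^{M_{\alpha+1}}|<\omega_{\alpha+1}$ is minor but necessary; it relies on the recursive cardinality estimates implicit in the construction together with $\GCH$ in $L$.
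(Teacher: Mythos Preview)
Your proof is correct and follows essentially the same strategy as the paper: identify $\QQ_{\alpha+1}$ in $L[c]$ with a ${<}\omega_{\alpha+1}$-closed forcing from $L$, transfer closure to distributivity across the c.c.c.\ extension $L\subseteq L[c]$, bound $|V_\alpha^{M_{\alpha+1}}|$ by $\aleph_\alpha$ via $\GCH$ and the c.c.c., and descend to $M_{\alpha+1}$ by absoluteness of density. The only cosmetic difference is the target of the identification: the paper simply asserts $\QQ_{\alpha+1}\cong\Add(\omega_{\alpha+1},1)^L$ in $L[c]$, whereas you give a dense embedding into the poset of bounded partial injections on $\omega_{\alpha+1}$ in $L$; both posets are ${<}\omega_{\alpha+1}$-closed in $L$, so the remainder of the argument is identical, and your explicit discussion of well-definedness and surjectivity of the embedding is in fact more careful than the paper's one-line claim.
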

\begin{proof}
  As in \autoref{prop:dist-1}, $L[c]\models\QQ_{\alpha+1}\cong\Add(\omega_{\alpha+1},1)^L$, the latter of which is ${\leq}\aleph_\alpha$-distributive in $L[c]$. Suppose now that $\{D_x\mid x\in V_\alpha\}\in M_{\alpha+1}$ is a family of dense open subsets of $\QQ_{\alpha+1}$. By the c.c.c.\ of the Cohen forcing, and the fact we only add a single real, $V_\alpha^{M_{\alpha+1}}$ has the same cardinality as $V_\alpha^L$, which by $\GCH$ is $\aleph_\alpha$. Therefore $\bigcap\{D_x\mid x\in V_\alpha^{M_{\alpha+1}}\}$ is dense in $L[c]$ and thus in $M_{\alpha+1}$. In particular, no new subsets of $V_\alpha$ are added.
\end{proof}

Finally, we define $M_{\alpha+2}$ as the symmetric extension obtained by applying the derived group and filter using the permutable family $\cA_{\alpha+1}$. If we now consider $V_{\omega+\alpha+2}^{M_{\alpha+1}[\varrho_{\alpha+1}\restriction A^{\alpha+1}_\xi]}$, this set is in $M_{\alpha+2}$. Indeed, every proper initial segment of $\varrho_{\alpha+2}$ is in $M_{\alpha+2}$, where $\varrho_{\alpha+2}$ is the sequence of these $V_{\omega+\alpha+2}$, and it has length $\omega_{\alpha+2}$. We can now show that it is $M_{\alpha+2}$-generic, etc., and thus all shall prosper.

The successor case can be found in \cite{Karagila:Bristol} as \S4.4, as well as \S4.7 and \S4.10 for successor of limit iterands. We are not separating the successor of limit steps in this text as the idea is the same with very minor variations, so as far as outlines go, there is essentially no difference.

\subsection{Outline of limit steps}
The limit step is divided into two parts. We have to contend with the iteration at a limit step, i.e.\ the finite support limit of the previous steps, and we have to deal with the limit step itself. An observant reader will notice that we did not utilise the full power of the framework of iterating symmetric extensions until now. Indeed, as far as successor steps are concerned any automorphism coming from coordinates before $\alpha$ itself will implement the identity function on the $(\alpha+1)$th iterand, rendering it moot.

It is here, at the limit, where we need to utilise the machinery as a whole. In fact, this machinery will do most of the heavy lifting at this stage. By \autoref{prop:dist-succ} we have that the rank initial segments of the universe are stabilising, indeed for any $\beta>\alpha$ we have $V_{\omega+\alpha+1}^{M_{\alpha+1}}=V_{\omega+\alpha+1}^{M_\beta}$. The work left at this stage is making sure that the generic sequences we collected thus far are symmetrically generic, and setting up the stage for the limit step iterand. So it is a good idea to understand the limit step as a whole before proceeding to the details.

The main idea is that limit steps coalesce the information we have up to that point. Arriving to the limit is easy, as we said, the machinery of productive iterations is working for us there. But how do we proceed now? We are limited by two factors that we need to ensure continue to hold when we deal with the $\alpha$th step:
\begin{enumerate}
\item $V_{\omega+\alpha}$ is stable. That is, no new sets of low rank are added, and
\item whatever we do is coherent with the other limit steps.
\end{enumerate}

One simple way of ensuring this is by taking products of previous successor steps. This way, if $\alpha<\beta$ are two limit ordinals, then $\QQ_\alpha$ is going to be, in some sense, a rank initial segment of $\QQ_\beta$. But we can think about this from a different angle.

At each step, we gathered $V_{\omega+\xi}$ of various intermediate models, for $\xi<\alpha$, our limit ordinal. But these are smoothed out, in a sense, as we progress up the hierarchy, as each $V_{\omega+\xi+1}^{M_{\xi+2}}$ contains each $V_{\omega+\xi}^{M_{\xi+1}}$. But what if we could pick just one sequence, and remember it? In that case we are not going to add bounded sets to $V_{\omega+\alpha}$, at least not if we are being careful, and instead we only add this sequence. This idea should seem somewhat familiar to readers of all walks of set theory. After all, if we want to add a new subset to $\aleph_\omega$, it is easy to add Cohen subsets to each $\aleph_n$ first, and then choose a point from each one, creating a new cofinal sequence.\footnote{When forcing like this in the context of $\ZFC$ these cofinal sequences are added automatically, of course, but if one does a symmetric iteration, the cofinal sequences are not added. Then one can consider such a forcing in a more material sense.} Similar ideas, in one way or another, show up through Prikry-style forcings as well.

The coherence of limit steps has another very important use for the limit iterand. The following definition is very important as well.
\begin{definition}
We say that a two-step iteration $\PP\ast\dot\QQ$ is \textit{upwards homogeneous} if whenever $\tup{p,\dot q}$ and $\tup{p,\dot q'}$ are two conditions, there is an automorphism $\pi$ of $\PP$ that respects $\dot\QQ$ and such that $\pi p=p$ and $p\forces\pi\dot q=\dot q'$. In other words, we can move conditions in $\dot\QQ$ by automorphisms of $\PP$. In the context of iterating symmetric extensions we require that $\pi$ comes from the relevant automorphism group.\footnote{This requirement is quite stronger than what is actually necessary. The concept of upwards homogeneity was fully explored and analysed in \cite{UH}. We keep this definition here for historical reasons, as it was the definition which appeared in \cite{Karagila:Bristol}.}
\end{definition}

So we want for the limit step that the iteration $\PP_\alpha$ can move about conditions in $\QQ_\alpha$. If $\PP_\alpha$ is a finite support iteration, then either $\QQ_\alpha$ needs to have some finitary flavour to its conditions, or somewhere along the iteration we had to condense the conditions from finitary to infinitary. Indeed, this is the very meaning of ``coalescing'' the successor steps.

To sum up, at the limit step of the iteration we use the properties of the iteration so far to ensure that the rank initial segments of the models stabilise, and indeed that the sequence of generic sequences is symmetrically generic for the iteration. We then want to have a forcing such that the sequences which lie in the product of the successor-step generics combine to form a generic for it, here the permutable scales will come in naturally, as we are concerned with products of increasingly longer sequences modulo the bounded ideal.

We return to the context of the Bristol model's construction. We denote by $\PP_\alpha$ the iteration up to $\alpha$ and by $\QQ_\alpha$ the $\alpha$th iterand, as we did before, and for now we will assume that those iterands were defined also for limit steps. If this proves to be somewhat confusing, the section can be read twice, first assuming $\alpha=\omega$.
\begin{fact}
  For every $\alpha$, $\PP_\alpha\ast\dot\QQ_\alpha$ is upward homogeneous.
\end{fact}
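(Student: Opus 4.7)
The plan is to proceed by transfinite induction on $\alpha$, exploiting the productive structure of the Bristol iteration. The key observation is that in a productive iteration the iterand $\dot\QQ_\alpha$ is given by a $\bullet$-name whose conditions are presented explicitly as $\gaut{\vec\sigma}\dot\varrho_I$ for $\vec\sigma\in\cG_\alpha$ and $I$ a bounded subset of the relevant index set lying in $L$. Thus each condition of $\dot\QQ_\alpha$ is literally tagged by an element of the ambient symmetry group, and upward homogeneity amounts to showing that this tagging group acts transitively enough on conditions while leaving sufficient slack to literally fix a prescribed $p\in\PP_\alpha$.

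For the successor case $\alpha=\beta+1$, I would write $\dot q=\gaut{\vec\sigma}\dot\varrho_I$ and $\dot q'=\gaut{\vec\sigma'}\dot\varrho_{I'}$ and look for $\pi=\gaut{\vec\tau}\in\cG_\alpha$ with $\pi p=p$ and $p\forces\pi\dot q=\dot q'$. Since $p$ has finite support in the iteration, only finitely many of its coordinates are non-trivial, and at each such coordinate $p(\gamma)$ is itself supported on a bounded set. The permutable family $\cA_\beta$ is rich enough to realise, at coordinate $\beta$, any bounded permutation of $\omega_{\beta+1}$ by a permutation of $\omega_\beta$ lying in a prescribed $\fix(\cB)$, and in particular fixing whatever small region of $\omega_\beta$ the coordinate $p(\beta)$ cares about. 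Assembling $\vec\tau$ coordinate by coordinate in this manner produces the required automorphism, with the identity at all inactive coordinates.

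The limit case runs along the same lines with permutable scales in place of permutable families. At a limit coordinate $\beta$, the automorphism group is a subgroup of $\prod_{\theta\in\SC(\omega_\beta)}\sG_\theta$; the scale $F_\beta$ together with richness of each $\sG_\theta$ allows one to implement any bounded permutation of $\omega_{\beta+1}$ by a coordinate-wise choice of bounded permutations $\pi_\theta$, each respecting whatever bounded region is relevant to $p$. Finite support of $p$ once again ensures the construction is non-trivial at only finitely many iteration coordinates.

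The main obstacle is keeping three simultaneous constraints on $\pi$ coherent: membership in $\cG_\alpha$ (which governs respect of $\dot\QQ_\alpha$), \emph{literal} equality $\pi p=p$ rather than mere equivalence below $p$, and the name-level equality $p\forces\pi\dot q=\dot q'$ rather than pointwise agreement on generics. The first two are the combinatorial heart of the argument, handled by permutable family/scale technology under finite-support tolerance. The third is smoothed out by the mixing property of respected names together with the $\bullet$-form of $\dot\QQ_\alpha$ in a productive iteration, which let us identify $\pi\dot q$ with $\dot q'$ on a pre-dense set below $p$ rather than hunt for a single automorphism realising the equality on every generic separately.
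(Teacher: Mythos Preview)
Your outline follows the same inductive scheme as the paper's sketch (coordinate-by-coordinate correction using permutable families at successor coordinates and the full-support product structure of $\sG_\beta$ at limit $\beta$), and the successor case is fine. But the limit case, as written, misidentifies the central difficulty.

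The issue is this: for limit $\alpha>\omega$, a condition $\dot q\in\dot\QQ_\alpha$ is an approximation $\gaut{\vec\sigma}\dot\varrho_\alpha\restriction(E,A)$, and when $\sup A\geq\omega$ this carries non-trivial data at \emph{infinitely many} successor coordinates $\beta+1<\alpha$. Moving $\dot q$ to $\dot q'$ therefore requires corrections at infinitely many such coordinates. Yet $\cG_\alpha$ is a \emph{finite-support} generic semidirect product, so you cannot simply ``assemble $\vec\tau$ coordinate by coordinate'' with one entry per correction. Your sentence ``finite support of $p$ once again ensures the construction is non-trivial at only finitely many iteration coordinates'' is a non sequitur here: the finite support of $p$ governs the constraint $\pi p=p$, not the question of why a finite-support $\vec\tau$ suffices to realise the name-level move $p\forces\pi\dot q=\dot q'$.

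The resolution, which the paper singles out explicitly, is the condensation at previous limit stages: $\sG_\delta$ for limit $\delta<\alpha$ is a subgroup of the \emph{full}-support product $\prod_{\gamma<\delta}\sG_{\gamma+1}$, so a \emph{single} entry $\tau_\delta$ of $\vec\tau$ encodes all successor corrections below $\delta$ at once. Combined with finitely many further entries between $\delta$ and $\alpha$, this keeps $\vec\tau$ finite-support. You state the full-support structure of limit $\sG_\beta$ but never deploy it to dissolve this obstacle; once you plug that in, your argument and the paper's coincide.
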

We have seen this for the case of $\alpha$ being $0$ or a successor.\footnote{To be absolutely correct, we only talked about successors of $0$ or other successor ordinals, but the successor of a limit will be just the same as before.} We will see the rest of the cases in this section as we progress through it. But for now it is easier to take this as a working assumption.

\begin{proposition}
Let $\alpha$ be a limit ordinal, then $\tup{\varrho_\beta\mid\beta<\alpha}$ is symmetrically $L$-generic for $\PP_\alpha$.
\end{proposition}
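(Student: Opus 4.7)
The plan is to exploit the bounded nature of excellent supports to reduce the genericity question over a limit stage to one at a lower, successor stage where the result is already available. Fix a symmetric dense open $D\in L$ in $\PP_\alpha$, with excellent support $\vec H=\tup{\dot H_\beta\mid\beta<\alpha}$. By excellence, the finite set $F\subseteq\alpha$ of coordinates at which $\dot H_\beta\neq\dot\sG_\beta$ is decided in advance; since $\alpha$ is a limit ordinal, choose a successor $\beta_0<\alpha$ with $F\subseteq\beta_0$. Consequently $D$ is stable under every $\gaut{\vec\pi}$ with $\pi_\beta\in H_\beta$ for $\beta<\beta_0$ and $\pi_\beta$ arbitrary in $\sG_\beta$ for $\beta\geq\beta_0$.

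First, I would check that the projection $D^*=\{p\restriction\beta_0\mid p\in D\}\subseteq\PP_{\beta_0}$ is a symmetric dense open subset of $\PP_{\beta_0}$, witnessed by $\vec H\restriction\beta_0$. Density is immediate by padding any $p\in\PP_{\beta_0}$ with trivial coordinates and applying density of $D$; openness uses that conditions have finite support; and symmetry transfers from $D$ because extending any $\vec\pi\in\vec H\restriction\beta_0$ by the identity on $[\beta_0,\alpha)$ produces an automorphism in $\vec H$. Since $\beta_0$ is a successor, the already-established successor case of the iteration theory gives that $G\restriction\beta_0$ is symmetrically $L$-generic for $\PP_{\beta_0}$, so there is $p_0\in G\restriction\beta_0\cap D^*$; fix a witness $q\in D$ with $q\restriction\beta_0=p_0$. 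As $q$ has finite support, its tail is concentrated on a finite set $T\subseteq[\beta_0,\alpha)$.

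The main obstacle is that $q\restriction[\beta_0,\alpha)$ need not be compatible with $G\restriction[\beta_0,\alpha)$, so $q$ must be massaged into the filter. For this, I combine two facts: $D$ is invariant under every $\gaut{\vec\sigma}$ with $\vec\sigma$ supported on $T$ and trivial on $[0,\beta_0)$ (such $\vec\sigma$ lie in $\vec H$ by the choice of $\beta_0$), and each iterand $\dot\QQ_\beta$ with $\beta\in T$ is upward homogeneous with $\dot\sG_\beta$ witnessing this. Working coordinate by coordinate on the finite set $T$, homogeneity yields automorphisms $\sigma_\beta\in\sG_\beta$ fixing the already-decided portion of $G$ and sending the $\beta$-component of $q$ to a condition compatible with $G\restriction(\beta+1)$. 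Assembling these into a single $\vec\sigma\in\cG_\alpha$ produces $\gaut{\vec\sigma}q\in D$ that is compatible with $G_\alpha$; openness of $D$ then delivers a common extension inside $G_\alpha\cap D$. The delicate point is that $\vec\sigma$ must be built as a genuine element of $\cG_\alpha$ rather than only in an intermediate model, which is exactly where the productive, $\bullet$-name structure of the iteration is needed: it allows each $\sigma_\beta$ to be chosen directly in $L$ and glued together into a finite-support automorphism with the required excellent-support behaviour.
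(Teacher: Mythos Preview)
Your projection-and-induction idea is reasonable and does differ from the paper's direct approach, but the correction step contains a real gap. You write that ``each iterand $\dot\QQ_\beta$ with $\beta\in T$ is upward homogeneous with $\dot\sG_\beta$ witnessing this,'' and then pick $\sigma_\beta\in\sG_\beta$ to move $q(\beta)$ into the filter. This conflates two distinct homogeneity notions. Upward homogeneity is a property of the two-step iteration $\PP_\beta\ast\dot\QQ_\beta$, and the moving automorphism lives in $\cG_\beta$ --- it has components at coordinates $\gamma<\beta$, potentially including $\gamma<\beta_0$. A condition $q(\beta)$ in $\dot\QQ_\beta$ has the shape $\gaut{\vec\tau}\dot\varrho_\beta\restriction I$ for some $\vec\tau\in\cG_\beta$, and to make it compatible with $\varrho_\beta$ you must essentially undo $\vec\tau$; an element of $\sG_\beta$ alone (which only permutes the domain $I$) does not do this. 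If instead you appeal to genuine upward homogeneity and take the correcting automorphism $\vec\pi\in\cG_\beta$, its components below $\beta_0$ need not lie in $\vec H\restriction\beta_0$, and you lose the guarantee that $\gaut{\vec\pi}q\in D$.

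The paper's proof avoids this by not projecting at all. It builds a starting condition $p=\tup{\varrho_\beta\restriction Y_\beta\mid\beta<\alpha}$ where each $Y_\beta$ is read off directly from $\vec H$ (so $p$ is fixed by every automorphism in $\vec H$), uses the true $L$-genericity of the Cohen real to make the first coordinate of an extension $p_0\in D$ agree with $c$, and then corrects the remaining coordinates one at a time via upward homogeneity. The crucial point is that the choice of $Y_\beta$ guarantees the correcting automorphism at each stage can be taken inside $\vec H$, so $D$ is preserved throughout. Your argument could be repaired along these lines --- for instance by strengthening $p_0$ so that any automorphism fixing it lies in $\vec H\restriction\beta_0$ --- but at that point you are rebuilding the $Y_\beta$ machinery and the projection step no longer buys anything.
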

This is essentially Proposition~4.4 ($\alpha=\omega$) and Proposition~4.15 ($\alpha$ an arbitrary limit ordinal) in \cite{Karagila:Bristol}. We will prove this statement in \autoref{correction:4.4}, as the original proof had a minor gap that needs to be corrected anyway.

But this means that we can understand the limit iterand fairly well now. We know that for $\beta<\alpha$, $V_{\omega+\beta+1}^{M_\beta}=V_{\omega+\beta+1}^{M_\alpha}$, and that not only we have a model of $\ZF$ which lies within $L[c]$, but that it is in fact an iteration of symmetric extensions, which means that we understand exactly the objects which lie within it and the truth value of statements about these objects from a forcing-theoretic point of view. We are now free to examine the iterand $\QQ_\alpha$.

As we reiterate time and time again, we want to ensure that no sets of rank $\omega+\alpha$ are added. In the successor steps we did that by making sure that $\QQ_\alpha$ is sufficiently distributive. For the limit step this will pose a problem. If we are to continue with our successor steps, then the $(\alpha+1)$th step needs to have a sequence of length $\omega_{\alpha+1}$. But without adding any sequences of length $\omega_\alpha$, this would mean that the sequence must have ``mostly existed'' already. So the forcing cannot be ${<}\omega_\alpha$-distributive, let alone ${\leq}|V_{\omega+\alpha}|$-distributive. In fact, if our plan is to add sequences of length $\alpha$ of sets of the form $V_{\omega+\beta}$ of some inner model, then at stages where $\cf(\alpha)=\omega$ we must have added an $\omega$-sequence.

The solution, as it turns out, is to not be distributive at all, but ensure that after applying the symmetric part of the step (rather than just the generic extension) we managed to remove any new set in $V_{\omega+\alpha}$. So even if we do not have a distributive forcing, we at least preserve the rank initial segments of the universe.

We define $\varrho_\alpha$, where $\alpha$ is a limit, as a ``copy of $\prod\SC(\omega_\alpha)^L$''. This means that for every $f\in\prod\SC(\omega_\alpha)^L$ we define $\varrho_{\alpha,f}$ to be the sequence $\tup{\varrho_{\beta+1}(f(\beta+1))\mid\beta<\alpha}$, and $\varrho_\alpha$ is defined as $\tup{\varrho_{\alpha,f}\mid f\in\prod\SC(\omega_\alpha)^L}$.

How should we define $\QQ_\alpha$, then? The idea is always ``bounded approximations'', but having the virtue of being a ``two-dimensional object'' this means that boundedness has two sides to it. Let us first deal with the one-dimensional counterparts: $\varrho_{\alpha,f}$.

If $A$ is a subset of $\alpha$, we write $\varrho_{\alpha,f}\restriction A=\tup{\varrho_{\beta+1}(f(\beta+1)\mid\beta\in A}$, and so our recursive hypothesis tell us that $\varrho_{\alpha,f}\in M_\alpha$ for every $f$, and this lets us define $\QQ_{\alpha,f}$ as the approximations of $\varrho_{\alpha,f}$. More rigorously, recall that we have defined the symmetric iteration $\PP_\alpha$, along with the direct limit of the generic semidirect products, $\cG_\alpha$. Then $\dot\QQ_{\alpha,f}$ is the forcing ordered by reverse inclusion on the interpretation of \[\left\{\gaut{\vec\pi}\dot\varrho_{\alpha,f}\restriction A\middd\sup A<\alpha, \gaut{\vec\pi}\in\cG_\alpha\right\}^\bullet.\]

For $E\subseteq\prod\SC(\omega_\alpha)$ we may now define $\varrho_\alpha\restriction E=\tup{\varrho_{\alpha,f}\mid f\in E}$, and so if $E$ is bounded in the product, i.e.\ there is $f\in\prod\SC(\omega_\alpha)$ such that for all $g\in E$ and for all $\beta<\alpha$, $g(\beta+1)<f(\beta+1)$, and $A$ is a bounded subset of $\alpha$, our conditions are going to be approximations of $\varrho_\alpha$, up to permutations of course, of the form $\tup{\varrho_{\alpha,f}\restriction A\mid f\in E}$, which we denote by $\varrho_\alpha\restriction(E,A)$. And so $\QQ_\alpha$ is given by the name \[\left\{\gaut{\vec\pi}\dot\varrho_\alpha\restriction(E,A)\middd E,A\text{ are bounded and }\gaut{\vec\pi}\in\cG_\alpha\right\}^\bullet.\]

\begin{proposition}
$\PP_\alpha\ast\dot\QQ_\alpha$ is upwards homogeneous.
\end{proposition}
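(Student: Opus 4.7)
The plan is to construct the desired automorphism $\gaut{\vec\sigma} \in \cG_\alpha$ coordinate by coordinate, exploiting the richness of the derived groups $\sG_{\beta+1}$ and the finite-support structure of the iteration. First I would reduce to a canonical form: since $\dot\QQ_\alpha$ is a $\bullet$-name enumerating all terms $\gaut{\vec\pi}\dot\varrho_\alpha\restriction(E,A)$ and the iteration is productive, we may assume $\dot q = \gaut{\vec\pi_1}\dot\varrho_\alpha\restriction(E_1,A_1)$ and $\dot q' = \gaut{\vec\pi_2}\dot\varrho_\alpha\restriction(E_2,A_2)$, with $\vec\pi_i \in \cG_\alpha$ and $E_i, A_i$ bounded. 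Because $p$ lies in a finite-support iteration, the requirement $\gaut{\vec\sigma}p = p$ boils down to fixing $p(\beta)$ for each $\beta$ in the finite set $S = \supp(p)$. Respect of $\dot\QQ_\alpha$ is automatic: $\gaut{\vec\sigma}$ permutes the $\vec\pi$-indexing within $\cG_\alpha$, leaving the defining $\bullet$-collection of $\dot\QQ_\alpha$ invariant.

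Next I would compute the effect of $\gaut{\vec\sigma}$ on a canonical condition. Since $\gaut{\vec\sigma}$ acts on each $\dot\varrho_{\beta+1}$ by permuting entries via $\iota(\sigma_{\beta+1})$, the sequence $\dot\varrho_{\alpha,f} = \tup{\dot\varrho_{\beta+1}(f(\beta+1))\mid\beta<\alpha}$ is sent to $\dot\varrho_{\alpha,\iota(\vec\sigma)(f)}$, where $\iota(\vec\sigma)(f)(\beta+1) = \iota(\sigma_{\beta+1})(f(\beta+1))$. Thus $\gaut{\vec\sigma}\dot q$ becomes $\gaut{\vec\sigma\vec\pi_1}\dot\varrho_\alpha\restriction(E_1,A_1)$, and after aligning $(E_1,A_1)$ with $(E_2,A_2)$ (possibly via passing to a common refinement, since both conditions come from the same forcing), the task reduces to finding $\vec\sigma$ with $\iota(\vec\sigma) = \iota(\vec\pi_2)\circ\iota(\vec\pi_1)^{-1}$ on the relevant bounded subset of $\prod\SC(\omega_\alpha)^L$, modulo the stabiliser of the aligned shape.

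Then I would build $\vec\sigma$ pointwise. At coordinates $\beta+1$ with $\beta \notin S$, the derived group $\sG_{\beta+1}$ implements every bounded permutation of $\omega_{\beta+2}$, which is the defining property of its permutable family, so one readily picks $\sigma_{\beta+1}$ realising the required columnwise permutation. At each $\beta \in S$, one must simultaneously fix the finite datum $p(\beta)$ and implement a required permutation of the $\dot\varrho_\beta$-index set; this is exactly what the upward homogeneity of $\PP_\beta\ast\dot\QQ_\beta$, already established at earlier stages, provides. A final bookkeeping step would verify that $\vec\sigma$ lies in $\cG_\alpha$: only coordinates in $S \cup \{\beta+1 : \beta \in A_1\cup A_2\}$ are moved, a bounded subset of $\alpha$, and $\iota(\vec\sigma)$ is a bounded permutation of $\omega_{\alpha+1}$, assembled from bounded componentwise pieces over a bounded coordinate set.

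The main obstacle I foresee is reconciling the two notions of support: the finite support of $p$ and the bounded (but possibly infinite) set of coordinates where $\vec\sigma$ must act nontrivially to carry $\dot q$ to $\dot q'$. When these overlap at some $\beta \in S$, the challenge is to find $\sigma_\beta$ that both fixes the specific data of $p(\beta)$ and implements the extra permutation needed at the $\dot\QQ_\alpha$ level; it is here that the inductive homogeneity hypothesis, together with the richness of the derived filters at lower stages, does the critical work.
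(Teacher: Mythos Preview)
Your sketch is close in spirit to the paper's outline---both build the automorphism coordinate by coordinate and invoke the already-established upwards homogeneity at earlier stages---but there is a genuine gap in your bookkeeping step, and it is precisely the difficulty the paper singles out.

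You claim that $\vec\sigma$ lies in $\cG_\alpha$ because its support is contained in $S\cup\{\beta+1\mid\beta\in A_1\cup A_2\}$, ``a bounded subset of $\alpha$''. But $\cG_\alpha$ is the \emph{finite support} generic semidirect product, not a bounded-support one. When $\alpha=\omega$ this is harmless, since bounded subsets of $\omega$ are finite. When $\alpha>\omega$, however, the sets $A_1,A_2$ are bounded in $\alpha$ but typically infinite, so your coordinate-by-coordinate construction of $\sigma_{\beta+1}$ for each $\beta\in A_1\cup A_2$ produces a sequence with infinitely many nontrivial coordinates, which is not an element of $\cG_\alpha$ at all.

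The missing idea is the condensation at limit coordinates. For a limit $\delta<\alpha$, the group $\sG_\delta$ is defined as a subgroup of the \emph{full} support product $\prod_{\gamma<\delta}\sG_{\gamma+1}$, and a single automorphism placed at coordinate $\delta$ acts on $\dot\QQ_\delta$ in a way that simultaneously shifts all the threads $\dot\varrho_{\gamma+1}$ for $\gamma<\delta$. So rather than correcting infinitely many successor coordinates one at a time, you pack those corrections into one (or finitely many) limit coordinates, together with finitely many successor coordinates above the relevant condensation point. This is exactly what the paper means when it says the previous limit cases ``condense this infinite amount of information''; without it, your $\vec\sigma$ never lands in $\cG_\alpha$.
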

We do not prove this statement here, but the idea is to simply utilise the upwards homogeneity of the previous steps and ``correct'' the coordinates one by one. One might ask how do we deal with the case where $\alpha>\omega$, as a condition has seemingly infinitely many non-trivial coordinates. And the answer, as we repeatedly mention here, is utilising the previous limit cases where we condense this infinite amount of information, also in the form of $\sG_\alpha$ being a subgroup of the full support product $\prod_{\beta<\alpha}\sG_{\beta+1}$.\footnote{This is not the same as the group $\cG_\alpha$ which will only include bounded sequences, which means that it will only implement the identity.} The complete proof can be found as Propositions 4.5 (for $\alpha=\omega$) and 4.15 in \cite{Karagila:Bristol}. The action of $\sG_\alpha$ is coordinatewise, and it is important to stress at this point that we have this action where the initial segments of $\varrho_\alpha$ are ``actual objects in $M_\alpha$'', so this is not applying automorphisms of a forcing, but rather applying permutations of each $\omega_{\beta+1}$.

\begin{proposition}
$\varrho_\alpha$ is $M_\alpha$-generic for $\QQ_\alpha$.
\end{proposition}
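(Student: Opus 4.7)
The plan is to adapt the genericity argument of \autoref{prop:genericity-1} to the limit-stage setting, where conditions carry two-dimensional bounded data: a bounded $A\subseteq\alpha$ and a bounded $E\subseteq\prod\SC(\omega_\alpha)^L$. Let $\dot D\in\IS_\alpha$ and let $p\in\PP_\alpha$ force that $\dot D$ is a dense open subset of $\dot\QQ_\alpha$. First, fix an excellent support $\vec H\in\cF_\alpha$ which witnesses $\dot D\in\IS_\alpha$, with finite non-trivial coordinates $F\subseteq\alpha$. At each $\beta\in F$ the name $\dot H_\beta$ reduces, by excellence, to $\fix(\cB_\beta)$ for some disjoint approximation of $\cA_{\gamma}$ (when $\beta$ is $0$ or $\gamma+1$) or to $K_{\eta_\beta,g_\beta}$ (when $\beta$ is a limit). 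Additionally we choose a bounded $A_0\subseteq\alpha$ with $F\subseteq A_0$ and a bounded $E_0\subseteq\prod\SC(\omega_\alpha)^L$, which are taken large enough that any $\cB_\beta$ or $K_{\eta_\beta,g_\beta}$ still admits, outside of $A_0$ and above $E_0$, enough ``free room'' to implement arbitrary bounded permutations on the corresponding indexing set.

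The central absorption claim is the analogue of the key step in \autoref{prop:genericity-1}: if $q\leq p$ satisfies $q\forces\gaut{\vec\pi}\dot\varrho_\alpha\restriction(E,A)\in\dot D$, with $A_0\subseteq A$, $E_0\subseteq E$, and with $\gaut{\vec\pi}$ acting trivially on the $A_0$-coordinates and below the $E_0$-bound, then already $q\forces\dot\varrho_\alpha\restriction(E,A)\in\dot D$. To prove this we manufacture $\gaut{\vec\tau}\in\vec H$ such that $\gaut{\vec\tau}q=q$ and such that the composition $\gaut{\vec\tau}\circ\gaut{\vec\pi}$ fixes $\dot\varrho_\alpha\restriction(E,A)$. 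The existence of such $\vec\tau$ is supplied coordinatewise: at successor coordinates we use that $\cA_\gamma$ is a permutable family to find an inverting permutation in $\fix(\cB_\beta)$, exactly as in the first step, while at limit coordinates we use that the permutable scale $F_\beta$ implements every bounded permutation, allowing us to invert the action in $K_{\eta_\beta,g_\beta}$. Since $\gaut{\vec\tau}\in\vec H\subseteq\sym(\dot D)$, applying the Symmetry Lemma to the hypothesis yields $q\forces\dot\varrho_\alpha\restriction(E,A)\in\dot D$.

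To conclude, let $\cX$ be a maximal antichain below $p$ of conditions each forcing some specific $\gaut{\vec\pi_q}\dot\varrho_\alpha\restriction(E_q,A_q)$ to lie in $\dot D$, where, by openness, we may enlarge each $(E_q,A_q)$ to contain $(E_0,A_0)$ and satisfy the hypothesis of the absorption claim. Applying the claim to each $q\in\cX$, we have $q\forces\dot\varrho_\alpha\restriction(E_q,A_q)\in\dot D$. The iteration $\PP_\alpha$ satisfies a sufficiently strong chain condition (inherited productively from the previous steps and $\GCH$) to ensure that $\cX$ is small enough that $E^*=\bigcup_{q\in\cX}E_q$ and $A^*=\bigcup_{q\in\cX}A_q$ remain bounded in $\prod\SC(\omega_\alpha)^L$ and $\alpha$ respectively; by the openness of $\dot D$ and maximality of $\cX$ we conclude $p\forces\dot\varrho_\alpha\restriction(E^*,A^*)\in\dot D$.

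The main obstacle is the coordinated bookkeeping in the absorption claim: we must simultaneously arrange the ``horizontal'' freedom (disjoint approximations of each $\cA_\gamma$ at successor coordinates $\beta=\gamma+1\in F$ whose $q$-domains at stage $\beta$ lie entirely within $\bigcup\cB_\beta$) and the ``vertical'' freedom (the permutable scale at limit coordinates and at the ambient $\omega_\alpha$-level), all while ensuring the inverting $\vec\tau$ genuinely fixes $q$ in the iteration. This is where the productive structure, the excellence of the support, and the permutability of the fixed Bristol sequence must work in concert; once it is in place, the antichain step and the chain-condition bookkeeping go through largely mechanically.
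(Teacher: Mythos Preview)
Your overall architecture---build a starting condition from the excellent support, prove an absorption claim that strips off the $\gaut{\vec\pi}$, and then conclude---matches the paper's strategy. The absorption claim itself is essentially right. The gap is in your final paragraph.

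You assert that $\PP_\alpha$ ``satisfies a sufficiently strong chain condition'' so that for a maximal antichain $\cX$ below $p$ both $A^*=\bigcup_{q\in\cX}A_q$ and $E^*=\bigcup_{q\in\cX}E_q$ stay bounded. This is false at limit stages, and the paper explicitly flags it in its errata: the $\aleph_\alpha$-c.c.\ hypothesis on $\PP_\alpha$ fails for limit $\alpha$, and ``we simply do not use the chain condition assumption for limit iterands.'' Concretely, already for $\alpha=\omega$ the iteration contains $\dot\QQ_1\cong\Add(\omega_1,1)^L$, so $\PP_\omega$ has antichains of every size $\aleph_n$. Even a countable antichain $\cX$ can produce $A^*=\bigcup_{q\in\cX}A_q$ cofinal in $\omega$ (each $A_q$ is just a finite set), and similarly the coordinate $\omega_1$ of $E^*$ can become unbounded once $|\cX|\geq\aleph_1$. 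So the union-over-antichain step does not go through.

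The fix is that you are proving more than you need. Genericity of $\varrho_\alpha$ over $M_\alpha$ only asks that the filter generated by $\varrho_\alpha$ meets each dense open $D\in M_\alpha$; you do not need a single $(E^*,A^*)$ that is forced by $p$ to land in $\dot D$. So instead of an antichain, pick a single $q\leq p$ lying in the actual (symmetrically) generic filter $G$ and deciding some $\gaut{\vec\pi_q}\dot\varrho_\alpha\restriction(E_q,A_q)\in\dot D$ as in your setup. Your absorption claim then gives $q\forces\dot\varrho_\alpha\restriction(E_q,A_q)\in\dot D$, and since $q\in G$ we get $\varrho_\alpha\restriction(E_q,A_q)\in D$. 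No chain condition is invoked. This is what the paper's ``proceed as before'' is meant to unpack at the limit, despite the superficial parallel with the c.c.c.\ argument used in \autoref{prop:genericity-1}.
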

This again follows the same pattern as the successor steps, although here there is a notable complication in the case where $\alpha>\omega$. We will outline the proof.
\begin{proof}
  Let $\dot D\in\IS_\alpha$ be a name for a dense open subset of $\dot\QQ_\alpha$, and let $\vec H$ be an excellent support witnessing that $\dot D\in\IS_\alpha$. For each $\beta<\alpha$, $H_\beta$ is a group of either the form $\fix(\cB_\beta)$ when $\beta$ is $0$ or successor, or $K_{\eta_\beta,f_\beta}$ when $\beta$ is itself a limit. We can use these to define bounded sets $E$ and $A$ such that whenever $\varrho_\alpha\restriction(E,A)$ is extended to a condition in $\dot D$, we may permute this extension using the upwards homogeneity without changing $\dot D$.

  In the case where $\alpha=\omega$, the set $E$ is simple. For each $n<\omega$ we let $\cB_n$ be a disjoint approximation such that $\fix(\cB_n)=H_n$, then $E=\prod_{n<\omega}\dom\cB_n$, where $\dom\cB_n=I$ such that $\cB_n=\{B_\xi\mid\xi\in I\}$.\footnote{In the original paper the proof of $\alpha=\omega$ is Lemma~4.7, and there is a minor mistake in the proof: $B_n$ should be $\dom\cB_{n-1}$. Lemma~4.20, which is the general claim, has a correct proof.\label{foot:4.7}} Let $A$ be the set $\{n<\omega\mid H_n\neq\sG_n\}$, which is also bounded by the virtue of $\vec H$ being excellent, and we have ourselves the condition $\varrho_\omega\restriction(E,A)$.

  In the case where $\alpha>\omega$ we need to take into consideration some limit point $\delta<\omega$, such that either $\delta+\omega=\alpha$, or $\delta$ is the smallest limit ordinal such that $\vec H$ is trivial above $\delta$. We call such $\delta$ the condensation point of $\vec H$. So above $\delta$ there can be at most finitely many non-trivial coordinates, and only in the case where $\delta=\alpha+\omega$. We then let $E_\beta$ for $\beta<\delta$ be decided by $H_\delta=K_{\eta_\delta,f_\delta}$ by taking $E_\beta=f_\delta(\beta)$, and above $\delta$ we do as we did in the case of $\omega$.

  Now we may proceed as before, using the fact that any extension of $\varrho_\alpha\restriction(E,A)$ would have the form $\gaut{\vec\pi}\varrho_\alpha\restriction(E',A')$, where $\pi_\beta$ will necessarily have $\pi_\beta\restriction E_\beta=\id$, so we may find the needed automorphisms in $\vec H$ to complete the proof as in the successor case.
\end{proof}

Now we only need to take care of the preservation of $V_{\omega+\alpha}^{M_\alpha}$. Indeed, $M_\alpha[\varrho_\alpha]$ contains the sequences $\varrho_{\beta+1}$ for $\beta<\alpha$, all of which have rank smaller than $\omega+\alpha$. Luckily, the symmetries of $\QQ_\alpha$ will help us get rid of these unwelcomed sets.\footnote{This also highlights the importance of the assumptions in \autoref{thm:preservation} being only about the symmetric extensions having the same $V_\alpha$.}

\begin{definition}
Working in $M_\alpha$, if $\dot x$ is a $\QQ_\alpha$-name we say that it is \textit{bounded by $f\in\prod\SC(\omega_\alpha)^L$} if whenever $\gaut{\vec\pi}\varrho_\alpha\restriction(E,A)\forces\dot y\in\dot x$, then we may replace $E$ by $E\cap f\da$, where $f\da=\{g\in\prod\SC(\omega_\alpha)^L\mid\forall\beta, g(\beta)<f(\beta)\}$. Similarly, if $\beta<\alpha$ we say that $\dot x$ is \textit{bounded by $\beta$} if we can replace $A$ by $A\cap\beta$.
\end{definition}

\begin{theorem}
  Suppose that $\dot x\in M_\alpha$ is a $\QQ_\alpha$-name such that every name appearing in $\dot x$ is $\check y$ for some $y\in M_\alpha$.
  \begin{enumerate}
  \item If $\dot x\in\HS_{\sF_\alpha}$ and $K_{\eta,f}\subseteq\sym(\dot x)$, then $\dot x$ is bounded by $f$.
  \item If $\forces\rank(\dot x)<\check\omega+\check\alpha$, then $\dot x$ is bounded by some $\beta<\alpha$.
  \end{enumerate}
\end{theorem}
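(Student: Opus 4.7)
The strategy in both parts is by contradiction, exploiting the symmetry of $\dot x$ in the manner familiar from the successor-step arguments.

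For part (1), suppose $\gaut{\vec\pi}\varrho_\alpha\restriction(E,A)\forces\check y\in\dot x$ while $\gaut{\vec\pi}\varrho_\alpha\restriction(E\cap f\da,A)$ does not, so some extension $q$ forces $\check y\notin\dot x$. The goal is to produce $\vec\tau\in K_{\eta,f}$ such that $\gaut{\vec\tau}q$ is compatible with $\gaut{\vec\pi}\varrho_\alpha\restriction(E,A)$. Since $\vec\tau\in\sym(\dot x)$ by hypothesis and $\gaut{\vec\tau}\check y=\check y$ (check names are fixed by every automorphism), the Symmetry Lemma then yields $\gaut{\vec\tau}q\forces\check y\notin\dot x$, and any common extension with $\gaut{\vec\pi}\varrho_\alpha\restriction(E,A)$ forces both $\check y\in\dot x$ and $\check y\notin\dot x$, the desired contradiction. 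To build $\vec\tau$, note that each $\tau_\theta$ must be the identity below $f(\theta)$, so $\vec\tau$ automatically fixes every $g\in f\da$ and therefore preserves the agreement of $q$ with the original condition on indices in $E\cap f\da$. The "extra" indices appearing in the support of $q$ outside $f\da$ are to be moved by $\vec\tau$ to positions disjoint from those appearing in $E$; this is possible because each $\theta\setminus f(\theta)$ is unbounded in $\theta$ while the relevant supports are bounded. The principal technical obstacle is assembling the coordinatewise choices $(\tau_\theta)$ into a genuine element of $K_{\eta,f}$: the resulting sequence must lie in the rich-enough derived group $\sG_\alpha$ and must implement a bounded permutation of $\omega_{\alpha+1}$ which is the identity below $\eta$. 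This is handled by the permutable-scale machinery, in close analogy with the disjoint-refinement arguments for permutable families employed in the successor case.

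For part (2), invoke the preservation theorem (\autoref{thm:preservation}) together with the stabilization identity $V_{\omega+\xi+1}^{M_\alpha}=V_{\omega+\xi+1}^{M_{\xi+1}}$ for $\xi<\alpha$. From the hypothesis that every name appearing in $\dot x$ is $\check y$ with $y\in M_\alpha$, combined with $\forces\rank(\dot x)<\check\omega+\check\alpha$, each $y$ for which some condition forces $\check y\in\dot x$ has $\rank(y)<\omega+\alpha$, hence lies in $M_{\xi_y+1}$ for some $\xi_y<\alpha$. The substantive point is to extract a uniform bound: the set $S=\{y\mid\exists p\,(p\forces\check y\in\dot x)\}$ is definable from $\dot x$ in $M_\alpha$, and the ranks of elements of $S$ are uniformly bounded by some $\omega+\beta$ with $\beta<\alpha$, using the chain-condition properties of $\QQ_\alpha$ inherited from the Cohen forcing together with the cofinality of $\omega+\alpha$. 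Once such a $\beta$ is fixed, every relevant $y$ lies in $M_{\beta+1}$, and since $M_{\beta+1}$ depends only on coordinates $\leq\beta$ of the iteration, the $A$-coordinate of any witnessing condition can be replaced by $A\cap\beta$ without disturbing the forcing relation, establishing the bound.

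The harder of the two parts is plainly (1): the coordinatewise construction of $\vec\tau$ must balance simultaneously the pointwise fixity below $f(\theta)$, the avoidance of conflict with $E$ above $f$, and the global constraint $\iota(\vec\tau)\restriction\eta=\id$. Part (2) is by comparison a soft consequence of preservation, once the uniform rank bound has been extracted.
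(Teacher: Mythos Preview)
Your treatment of part (1) is essentially correct and matches the paper's approach: it is precisely the ``standard homogeneity argument'' the paper alludes to, carried out with automorphisms in $K_{\eta,f}$.

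Your treatment of part (2), however, has a genuine gap, and your assessment of relative difficulty is inverted. The paper regards (1) as routine and devotes its attention to (2). The issue is this: you argue that once every relevant $y$ lies in $M_{\beta+1}$, the $A$-coordinate of any witnessing condition can be truncated to $A\cap\beta$ ``without disturbing the forcing relation'', because $M_{\beta+1}$ depends only on coordinates ${\leq}\beta$. But the forcing relation $q\forces\check y\in\dot x$ depends on $\dot x$, not merely on $y$. Even if $y$ has small rank, the name $\dot x$ could in principle use information from coordinates above $\beta$ to decide membership. Nothing in your argument rules this out.

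The paper's mechanism is different: since $\dot x\in M_\alpha$, its projection $[\dot x]$ is a name in $\IS_\alpha$, and therefore has an excellent support $\vec H$. Excellent supports are finite, so there is some $\beta<\alpha$ above which $\vec H$ is trivial. Automorphisms in $\cG_\alpha$ acting only on coordinates above $\beta$ then respect $[\dot x]$, and (taking $\beta$ large enough to also bound the forced rank) they fix each $[\check y]$ as well. This is a homogeneity argument using $\PP_\alpha$-automorphisms rather than the $\QQ_\alpha$-automorphisms used in (1); upward homogeneity lets one move the part of a condition above $\beta$ to anything at all, which forces the $A$-coordinate to be irrelevant beyond $\beta$. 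Your outline never invokes the support of $[\dot x]$ in $\IS_\alpha$, and without it the truncation step is unjustified.
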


Combining these we have that if $\dot x$ is a symmetric name for a set of small rank, then all of its elements are decided by conditions with a uniform bound, meaning $\dot x$ is equal to an object in $M_\alpha$. The proof of (1) is the standard homogeneity argument, and we have used it before in \autoref{thm:example-ccc}, so we will only outline the proof of (2).

\begin{proof}
  Suppose that $\dot x\in\IS_{\alpha+1}$, we denote by $[\dot x]$ the projection of the name to $\IS_\alpha$. That is, $[\dot x]$ is a name in $\IS_\alpha$ which is interpreted in $M_\alpha$ as a name in $\HS_{\sF_\alpha}^{M_\alpha}$. By the assumption that each name appearing in $\dot x$ is of the form $\check y$ for some $y\in M_\alpha$, we may assume that $[\check y]$, which appears in $[\dot x]$ (in the broad sense of the term) is a name in $\IS_\beta$ for $\beta$ such that $\omega+\beta$ is an upper bound on the forced rank of $\dot x$.

  Let $\beta<\alpha$ be large enough such that $\vec H$ is trivial above $\beta$, where $\vec H$ witnesses that $[\dot x]\in\IS_\alpha$. Now we can use automorphisms which only move coordinates above $\beta$ to move any names of conditions, $\gaut{\vec\pi}\varrho_\alpha\restriction(E,A)$, by changing their ``content above $\beta$'' to any value. Thus, we may conclude that we may reduce $A$ to $A\cap\beta$.
\end{proof}
The theorem is proved as Lemma~4.10 and Lemma~4.27 in \cite{Karagila:Bristol}. This almost completes our decoding apparatus. We only need to worry about the $\varrho_{\alpha+1}$ now.

We define $R_\xi$ for $\xi<\omega_{\alpha+1}$ to be $V_{\omega+\alpha+1}^{M_\alpha[\varrho_{\alpha,f^\alpha_\xi}]}$. That is, we use $\varrho_{\alpha,f^\alpha_\xi}$ as the ``guide'' for a new sequence in $V_{\omega+\alpha+1}$. Those who kept track can guess now that $\varrho_{\alpha+1}$ is $\tup{R_\xi\mid\xi<\omega_{\alpha+1}}$. We utilise the fact that the scale is permutable to ensure that any bounded part will be in $M_{\alpha+1}$, as well as the rest of the permutability apparatus to ensure the upwards homogeneity. This is also the point where we see why the definition of $\QQ_\alpha$ works in general, despite $\PP_\alpha$ being a finite support iteration. The $\varrho_{\alpha,f}$ are initial segments of those $\varrho_{\beta,f^*}$ that come in the future, and even if $\cf(\alpha)>\omega$, we still end up with what we wanted to have.

With this we finish the discussion on the decoding apparatus. This is the main technical part of the construction. It is our sword and shield in our journey downwards. Now that we have that, we may venture deeper into the Hadean adventure that is the Bristol model.

\section{Three views of the Bristol model}\label{sect:cerberus}
Much like Cerberus, the construction of the Bristol model can be seen as a three-headed dog, guarding the realm of the underworld of models of $\ZF$. The three heads of Cerberus represent the three causes of strife: nature, cause, and accident. The three heads of the Bristol model can be seen as representing three typical approaches to its construction: nature, cause, and accident. All lead us to the same construction, and indeed when getting down to brass tacks, the details become suspiciously similar in each approach. But the presentation of one may be more appealing to some readers over another.

Nature is the way in which the original group in Bristol went about to define the model: defining the von Neumann hierarchy by hand, and defining a model $L(X)$ where $X$ is a class of sets in the Cohen extension. Cause is the way in which \cite{Karagila:Bristol} presents the construction: defining an iteration of symmetric extensions, and finding a symmetrically generic filter at each step of the way, thus constructing an iteration and the von Neumann hierarchy of the Bristol model in tandem. Finally, Accident is the way in which we define a productive iteration of symmetric extensions, we study this iteration in an abstract manner, and then we find that by ``complete accident'' we can find all the symmetrically generic filters inside the Cohen extension.

Fix a Bristol sequence, a permutable family $\cA_\alpha=\{A^\alpha_\xi\mid\xi<\omega_{\alpha+1}\}$ for $\alpha=0$ or a successor, and a permutable scale $F_\alpha=\{f^\alpha_\xi\mid\xi<\omega_{\alpha+1}\}$ for $\alpha$ limit. We will define $M$, the Bristol model, in three different, yet equivalent ways. We will argue that it is a model of $\ZF+\forall x(V\neq L(x))$.

\subsection{Nature}
In here we will define the Bristol model one step at a time by defining its von Neumann hierarchy in $L[c]$. For this purpose it would be easier at times to use an increasing sequence modulo bounded sets, rather than the permutable families. Since the two are equivalent, we let $\{T^\alpha_\xi\mid\xi<\omega_{\alpha+1}\}$ denote a sequence obtained from $\cA_\alpha$.

We define the $\varrho_\alpha$ and $V_{\omega+\alpha}^M$ in tandem, and we will omit the $M$ from the superscript where possible, as the definitions will be complicated enough. Let $\varrho_0=c$ and, as $V_\omega^M$ is just $V_\omega^L=L_\omega$, it is defined. Suppose that for $\alpha$, $\varrho_\alpha$ and $V_{\omega+\alpha}$ were defined.

If $\alpha$ is $0$ or a successor ordinal, define
\begin{align}
V_{\omega+\alpha+1}=& \bigcup_{\xi<\omega_{\alpha+1}} V_{\omega+\alpha+1}^{L(V_{\omega+\alpha},\varrho_\alpha\restriction T^\alpha_\xi)},\\
\varrho_{\alpha+1}=& \tupp{V_{\omega+\alpha+1}^{L(V_{\omega+\alpha},\varrho_\alpha\restriction A^\alpha_\xi)}\middd\xi<\omega_{\alpha+1}}.
\end{align}
If $\alpha$ is a limit ordinal, we define $V_\alpha=\bigcup_{\beta<\alpha} V_{\omega+\beta+1}$, and we define $\varrho_\alpha=\prod_{\beta<\alpha}\varrho_{\beta+1}$. And as before we write $\varrho_{\alpha,f}$ to indicate the ``thread'' of the function $f$ in this product. To define the next step we need an analogue of the $T^\alpha_\xi$, we write $\overline{\varrho_{\alpha,f}}$ to denote the sequence of $V_{\omega+\beta+1}^{L(V_\omega+\beta,\varrho_\beta\restriction T^\beta_{f(\beta)})}$ for $\beta<\alpha$.

And we now define
\begin{align}
  V_{\omega+\alpha+1}=& \bigcup_{\xi<\omega_{\alpha+1}} V_{\omega+\alpha+1}^{L(V_{\omega+\alpha},\overline{\varrho_{\alpha,f^\alpha_\xi}})},\\
  \varrho_{\alpha+1}=& \tupp{V_{\omega+\alpha+1}^{L(V_{\omega+\alpha+1},\varrho_{\alpha,f^\alpha_\xi})}\middd\xi<\omega_{\alpha+1}}.
\end{align}
Finally, $M=\bigcup_{\alpha\in\Ord} V_{\omega+\alpha+1}^M$. The handwritten notes passed on to us by some of the members of the Bristol group indicate that the original line of thought was about $\RR^M$ and $\power(\RR)^M$, rather than $V_{\omega+1}$ and $V_{\omega+2}$. The arguments, moreover, as to why $V_{\omega+1}=V_{\omega+1}^{L(V_{\omega+1},\varrho_1\restriction T^\alpha_\xi)}$, i.e.\ why no reals are added when defining $\power(\RR)^M$, was not written down in these notes. Instead it merely suggests ``condensation''.

While restoring the original arguments is certainly beyond us, these would be equivalent, more or less, to the arguments we presented at the first step of \autoref{sect:decoding}.
\subsection{Cause}
In here we will define the Bristol model in tandem: define a symmetric extension, find it a generic in $L[c]$, define a symmetric extension again, repeat ad ordinalum.

This definition was used in \cite{Karagila:Bristol}, and you can very clearly see that this is the definition that we have in mind, as it very much dominates the approach given in \autoref{sect:decoding}. As such, we really have done all the work ahead of time.

We define $M_\alpha,\varrho_\alpha$ as in the decoding apparatus, i.e.\ as the generic objects for the symmetric iteration. We now define $M$ simply as $\bigcup_{\alpha\in\Ord}M_\alpha=\bigcup_{\alpha\in\Ord}V_{\omega+\alpha}^{M_{\alpha+1}}$.
\subsection{Accident}
In here we will first define a class-forcing, and then argue that we can just happen to find symmetrically generic filters in $L[c]$.

Despite being the guiding view on the construction of the Bristol model, the actual argument for $M\models\ZF$ in \cite{Karagila:Bristol} is the one rising from this approach, as it is less ``ad-hoc'' and more structural and general. This is also the reason why the proof of the distributivity of successor steps in the original paper was proved directly, rather than the approach used in this paper, which is more in line with ``Cause''.

We first define the iteration. Let $\PP_0=\{\1\}$ and $\QQ_0=\Add(\omega,1)$, $\sG_0$ and $\sF_0$ are the derived group and filter. Finally, $\dot\varrho_0$ is the canonical name for the Cohen real.

Suppose that $\PP_\alpha$ is the finite support iteration of the symmetric extensions defined so far. In the case where $\alpha=\beta+1$, we define $\dot\QQ_\alpha$ as the name \[\left\{\gaut{\vec\pi}\tupp{\dot V_{\omega+\alpha}^{V[\varrho_\beta\restriction A^\beta_\xi]}\middd\xi\in I}^\bullet\middd\gaut{\vec\pi}\in\cG_\alpha, I\in[\omega_\alpha]^{<\omega_\alpha}\right\}^\bullet.\]
Where $\dot V_{\omega+\alpha}^{V[x]}$ denotes the name of the rank initial segment of the extension of $\IS_\beta$ by the set $x$. We then define $\dot\varrho_\alpha$ as the name for the generic of $\dot\QQ_\alpha$.

If $\alpha$ is a limit ordinal, we define $\PP_\alpha$ as the finite support iteration. We next define $\dot\QQ_\alpha$ in the same spirit. For $f\in\prod\SC(\omega_{\alpha+1})$ we define \[\dot\QQ_{\alpha,f}=\{\gaut{\vec\pi}\tup{\dot\varrho_{\beta+1}(f(\beta+1))\mid\beta\in A}\mid\gaut{\vec\pi}\in\cG_\alpha, \sup A<\alpha\}^\bullet,\]
and we then define $\dot\QQ_\alpha$ by adding the additional dimension of a bounded subset of $\prod\SC(\omega_\alpha)$. As before, $\sG_\alpha$ and $\sF_\alpha$ are the derived group and filter. Finally, $\dot\varrho_\alpha$ is the name of the generic filter. Moving on, the definition $\dot\QQ_{\alpha+1}$ and $\dot\varrho_{\alpha+1}$ is in line with what we have done so far.

The properties of the decoding apparatus imply the distributivity of each iterand. We need to be a bit more careful here, as we are not allowed to argue in $L[c]$, instead we carry on a recursive hypothesis that for successor steps $\PP_\alpha$ satisfies a chain condition that allows us to prove that $\dot\QQ_\alpha$ will be isomorphic to $\Add(\omega_\alpha,1)^L$.

The conditions of \autoref{thm:preservation} are therefore satisfied. This implies that if $G$ is any symmetrically $L$-generic for $\PP$, the class-length iteration, then $\IS^G\models\ZF$, at the very least.

By recursion we can now show that each $\dot\varrho_\alpha$ has a symmetric interpretation inside $L[c]$, and therefore we may find an interpretation of the model which is intermediate between $L$ and $L[c]$.
\subsection{The basic properties of the Bristol model}
\begin{proposition}
$M\models\ZF$.
\end{proposition}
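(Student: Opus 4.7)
The plan is to adopt the ``Accident'' viewpoint, which provides the cleanest structural argument. We have a productive symmetric iteration $\tup{\dot\QQ_\alpha,\dot\sG_\alpha,\dot\sF_\alpha\mid\alpha\in\Ord}$ of class length, with $\PP_\alpha$ the finite-support iteration up to stage $\alpha$ and $\IS$ the associated class of hereditarily respected names. We take a symmetrically $L$-generic filter $G\subseteq\PP$ and set $M=\IS^G$. Three things need to be verified: (i) the hypotheses of \autoref{thm:preservation} are satisfied; (ii) such a $G$ actually exists in $L[c]$; (iii) $\IS^G\models\ZF$.

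For (i), I would first note that each $\dot\QQ_\alpha$ is weakly homogeneous with $\dot\sG_\alpha$ witnessing this, because the derived group acts transitively on the named conditions $\gaut{\vec\pi}\dot\varrho_\alpha\restriction(E,A)$ by permuting the indexing coordinates. Next, I would check the rank-stabilisation hypothesis of \autoref{thm:preservation}: at each successor stage the appropriate analogue of \autoref{prop:dist-succ} gives ${\leq}|V_{\omega+\alpha}|$-distributivity of $\dot\QQ_{\alpha+1}$, so $V_{\omega+\alpha+1}^{\IS_\alpha^{G\restriction\alpha}}=V_{\omega+\alpha+1}^{\IS_{\alpha+1}^{G\restriction\alpha+1}}$, while at limit stages, although $\dot\QQ_\alpha$ is not distributive, the bounded-name theorem just proved shows that every symmetric name for a set of rank ${<}\omega+\alpha$ is equal to a name in the previous model. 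Hence the rank initial segments stabilise, and \autoref{thm:preservation} applies to every $\eta\in\Ord$: for each $\eta$, $V_\eta^{\IS_\alpha^{G\restriction\alpha}}$ is eventually constant in $\alpha$, so $V_\eta^{\IS^G}$ is a set.

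For (ii), I would build $G$ recursively inside $L[c]$. At stage $\alpha$, having produced $\varrho_\beta$ for $\beta<\alpha$, the decoding apparatus of \autoref{sect:decoding} constructs $\varrho_\alpha\in L[c]$ that is $M_\alpha$-generic for $\dot\QQ_\alpha^{G\restriction\alpha}$, using the permutable family $\cA_\alpha$ at successor stages and the permutable scale $F_\alpha$ at limit stages. The content of the propositions in that section is exactly that $\tup{\varrho_\beta\mid\beta<\alpha}$ is not merely pointwise generic but symmetrically $L$-generic for $\PP_\alpha$, so the pieces assemble to a symmetrically $L$-generic filter for the class-length $\PP$.

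For (iii), $\ZF$ in $\IS^G$ follows from the usual symmetric-forcing recipe in view of the rank stabilisation. Transitivity gives foundation and the set-theoretic basics; the cumulative presentation $\IS^G=\bigcup_\eta V_\eta^{\IS^G}$ together with each $V_\eta^{\IS^G}$ being a set in $L[c]$ gives pairing, union, infinity, and power set; separation and replacement follow from the Symmetry Lemma for $\forces^\IS$ together with definability of the forcing relation in the productive-iteration framework. The main obstacle I expect is bookkeeping at class length: one must be sure that the recursive construction of $G$ in $L[c]$ is itself uniformly definable so that $\IS^G$ is a legitimate class, and that the rank-stabilisation input to \autoref{thm:preservation} holds \emph{uniformly} enough to conclude $\ZF$-replacement — this is precisely what the productive, excellent-support framework of \cite{Karagila:Iterations} is designed to guarantee, and invoking it is the heart of the argument. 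Finally, an easy induction on $\alpha$ matches the ``Nature'' definition of $V_{\omega+\alpha+1}^M$ with $V_{\omega+\alpha+1}^{\IS_{\alpha+1}^{G\restriction\alpha+1}}$, identifying the three constructions and completing the proof.
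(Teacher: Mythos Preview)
Your proposal is correct and follows what the paper calls the ``Accident'' approach: verify the hypotheses of \autoref{thm:preservation}, build the symmetrically generic $G$ inside $L[c]$ via the decoding apparatus, and then invoke the general class-length machinery of \cite{Karagila:Iterations} (Theorem~9.4 there) to conclude $\IS^G\models\ZF$. The paper explicitly lists this as one of its two proofs.

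However, the proof the paper actually writes out is the second, more elementary one, suited to the ``Nature''/``Cause'' viewpoints: since $M$ is a transitive subclass of $L[c]$ containing all ordinals, it suffices (by the standard Jech criterion) to check closure under G\"odel operations and almost universality. Closure under G\"odel operations is immediate because any $x,y\in M$ lie in some common $M_\alpha$, which is already a $\ZF$ model; almost universality follows directly from the rank stabilisation $V_{\omega+\alpha}^{M_\alpha}=V_{\omega+\alpha}^{M_\beta}$ for all $\alpha\leq\beta$, so any subset of $M$ in $L[c]$ is covered by some $V_{\omega+\alpha}^M\in M$. This route bypasses the class-forcing black box entirely and uses the same stabilisation fact you feed into \autoref{thm:preservation}, but packages it through the G\"odel-operations criterion rather than through the symmetric-iteration forcing relation. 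Your axiom-by-axiom sketch in (iii) is doing essentially the same work as almost universality, just less cleanly.
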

\begin{proof}
We have two slightly different proofs here, the first we mentioned in the ``Accident'' approach, utilising \autoref{thm:preservation}.\footnote{This is mentioned after \autoref{thm:preservation}, and proved as Theorem~9.4 in \cite{Karagila:Iterations}.} The second approach works well for ``Nature'' and ``Cause''.

Since $M$ is very clearly a transitive subclass of $L[c]$ that contains all the ordinals, it is enough to verify that it is closed under G\"odel operations and that it is almost universal to conclude that it is a model of $\ZF$. The first part is easy: if $x,y\in M$, there is some $M_\alpha$, or some large enough $V_{\omega+\alpha}$ in the ``Nature'' approach, such that $x,y\in M_\alpha$ and therefore $\{x,y\}$, $x\times y$, etc.\ are all in $M_\alpha$ and therefore in $M$.

  The second part is the fact mentioned at the end of ``Nature'' about condensation; or in the case of ``Cause'' this follows from the fact that for each $\alpha$, $V_{\omega+\alpha}^{M_\alpha}=V_{\omega+\alpha}^{M_\beta}$ for all $\alpha\leq\beta$, and therefore the model is almost universal.
\end{proof}
\begin{proposition}\label{prop:non-constructible}
$M\models\forall x(V\neq L(x))$.
\end{proposition}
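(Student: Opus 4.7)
The plan is to argue by contradiction using a single-stage symmetry, in the spirit of the proof of \autoref{prop:R-amorphous}. Suppose that $M\models V=L(x)$ for some set $x\in M$. Since $M=\bigcup_{\alpha\in\Ord}M_\alpha$ and $\tcl(\{x\})$ is a set, I can pick an ordinal $\alpha$ for which $\tcl(\{x\})\subseteq V_{\omega+\alpha}^{M_\alpha}$, so that both $x$ and every parameter from its transitive closure already live inside $M_\alpha$. By the preservation theorem (\autoref{thm:preservation}) these transitive closures are then also unchanged in any later $M_\beta$.

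I would then focus on a single element $R_0$ at stage $\alpha+1$, the first member of the decoded sequence obtained from $\varrho_\alpha$ via the permutable family $\cA_\alpha$. Since $R_0\in M_{\alpha+1}\subseteq M=L(x)$, the standard structure theory of $L(x)$ supplies a formula $\varphi$, ordinal parameters $\vec\beta$, and parameters $\vec p\in\tcl(\{x\})\subseteq M_\alpha$ such that $R_0$ is the unique $z\in M$ for which $M\models\varphi(z,x,\vec\beta,\vec p)$. Translating this to the iterated symmetric setting, there are $\IS_\alpha$-names $\dot x$ and $\dot{\vec p}$ for $x$ and $\vec p$, and a respected name $\dot R_0$ at stage $\alpha+1$, for which $\forces^\IS$ decides the uniqueness statement.

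The contradiction then comes from a single automorphism. Take $\pi\in\sG_\alpha$ (lifted trivially on every other coordinate of the generic semidirect product $\cG$) implementing a bounded permutation of $\omega_{\alpha+1}$ with $\iota(\pi)(0)\neq 0$. Because $\sG_\alpha$ acts only on the $\alpha$th coordinate of the iteration and therefore as the identity on all $\IS_\alpha$-names, $\pi$ fixes both $\dot x$ and $\dot{\vec p}$. The Symmetry Lemma applied to the forced uniqueness statement then shows that $\pi\dot R_0$ is also forced to satisfy the same defining property, so $\pi\dot R_0=\dot R_0$ is forced. But the equivariance of the decoded sequence, a direct generalisation of \autoref{prop:re-order}, gives $\pi\dot R_0=\dot R_{\iota(\pi)(0)}$, whence $R_0=R_{\iota(\pi)(0)}$ in $M$. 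This is absurd, since the $R_\xi$ are pairwise distinct, encoding mutually generic pieces of $\varrho_\alpha$ with respect to almost disjoint sets from $\cA_\alpha$.

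The main technical obstacle is the translation between the $L(x)$-definability of $R_0$, which a priori uses parameters in $\tcl(\{x\})$, and the respected-name setup where the Symmetry Lemma applies, in particular ensuring that the parameter names $\dot{\vec p}$ can really be taken inside $\IS_\alpha$ (and are therefore moved trivially by $\sG_\alpha$). Once this bookkeeping is in place, the proof reduces to the homogeneity argument we have already seen when establishing symmetry of $\dot R_\alpha$ in \autoref{prop:re-order} and amorphousness of $R$ in \autoref{prop:R-amorphous}, pushed one stage deeper into the iteration.
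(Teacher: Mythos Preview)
Your approach is far more elaborate than the paper's, which is essentially a one-liner: if $x\in M$ then $x\in M_\alpha$ for some $\alpha$, and since $M_\alpha$ is a transitive model of $\ZF$ containing $x$ and all ordinals, the minimality of $L(x)$ gives $L(x)\subseteq M_\alpha\subsetneq M_{\alpha+1}\subseteq M$. No symmetry argument, no forcing relation, no names; just the absoluteness of the $L(x)$-construction. Your route, by contrast, tries to establish directly that $R_0$ cannot be $L(x)$-definable from parameters in $M_\alpha$ by a homogeneity argument carried out at the level of the full $\Ord$-length iteration.

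There is a genuine gap in your argument. When you apply the Symmetry Lemma to the forced uniqueness statement, you obtain that $\pi p\forces^\IS$ ``$\pi\dot R_0$ is the unique $z$ with $\varphi(z,\dot x,\check{\vec\beta},\dot{\vec p})$'', not that this is forced by $p$ or by anything in the actual generic filter. To conclude $R_0=R_{\iota(\pi)(0)}$ in $M$ you must arrange that $\pi p$ and $p$ are compatible, or that $\pi p$ lies in the symmetrically generic filter. This is exactly the kind of compatibility bookkeeping that the proofs of \autoref{prop:R-amorphous} and \autoref{prop:m2-bpi} spend their effort on, and it is not free here: you are applying $\pi\in\sG_\alpha$ inside the full class-length iteration, so $\pi$ acts (via upward homogeneity) on the conditions at every coordinate $\beta>\alpha$ as well, not just on the $\alpha$th one. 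Your sentence ``lifted trivially on every other coordinate'' elides this. The obstacle you flag in your last paragraph (parameter bookkeeping) is actually the easy part; the hard part you do not mention is controlling the condition across the entire tail of the iteration.

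The cleanest repair is simply to notice that you never needed the symmetry argument: once you have $x\in M_\alpha$, the containment $L(x)\subseteq M_\alpha$ is immediate, and you already know $R_0\in M_{\alpha+1}\setminus M_\alpha$, so $R_0\notin L(x)$ and hence $M\neq L(x)$.
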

\begin{proof}
If $x\in M$, then there is some $\alpha$ such that $x\in M_\alpha$, and therefore $L(x)\subseteq M_\alpha$. But since $M_\alpha\subsetneq M_{\alpha+1}\subseteq M$, we have that $L(x)\neq M$. In the ``Nature'' approach we need to be slightly more careful, as we have to verify that if $x\in V_\alpha$, then $V_{\alpha+1}\notin L(x)$. One way of doing this would be to argue that $\varrho_\alpha\restriction A^\alpha_\xi$ are all generic over $L(V_\alpha)$, and therefore cannot be elements of this model.
\end{proof}
We will see other ways of deducing \autoref{prop:non-constructible} later on, in ways that will be significantly more informative and more general.

\section{Errata to ``the Abyss''}
Here we correct some minor gaps and mistakes in the original paper. These mistakes repeat, once with the cases of $\alpha=0$ and $\alpha=\beta+2$, and once with the cases of $\alpha=\omega$ and a general limit ordinal.

We repeat the remark we made in \autoref{foot:4.7}: in \cite{Karagila:Bristol} the proof of Lemma~4.7, dealing with the genericity of $\varrho_\omega$, contains a typo, whereas $B_n$ is defined as $\bigcup\cB_n$, and it should have been defined as $\dom\cB_{n-1}$. This is somewhat inconsequential, as the more general proof where $\alpha$ is a limit, in Lemma~4.20, is written correctly.

We also point out that there is a minor mistake in the construction's induction hypothesis, where the requirement that $\PP_\alpha$ satisfies the $\aleph_\alpha$-c.c.\ will not hold for limit ordinals, only for $0$ and successor ordinals. We can modify this by defining a notion of ``symmetric chain condition'',\footnote{For example, $\tup{\PP,\sG,\sF}$ has $\kappa$-s.c.c.\ if every symmetrically dense open set contains a predense set of size ${<}\kappa$.} but this is an unnecessary complication. We simply do not use the chain condition assumption for limit iterands.

\subsection{Corrections to Lemma 2.12/4.1}\label{correction:2.12}
Lemma~2.12 is read in the context of the first steps. Namely, $\PP$ is Cohen forcing, $\dot\QQ$ is the name of the forcing adding $\varrho_1$, denoted in that lemma by $\sigma$. As such $\HS$ is the class of hereditarily symmetric names defined in the very first step.
\begin{lemma*}[\cite{Karagila:Bristol}, Lemma~2.12]
  Suppose that $\dot D\in\HS$ and $p\forces\dot D\subseteq\dot\QQ$ is a dense open set. There is some $\eta<\omega_1$ such that for every $\pi$ and $A$ such that $p\forces\pi\dot\sigma_A\in\dot D$ and $\eta\subseteq A$, if $\tau\dot\sigma_A$ is a condition such that $p\forces\pi\dot\sigma_\eta=\tau\dot\sigma_\eta$, then $p\forces\tau\dot\sigma_A\in\dot D$ as well.
\end{lemma*}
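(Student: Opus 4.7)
The plan is to invoke the Symmetry Lemma, constructing $\rho\in\fix(\cB)$ with $\rho p=p$ and $\rho\pi\dot\sigma_A=\tau\dot\sigma_A$ as $\PP$-names. Then $p\forces\pi\dot\sigma_A\in\dot D$ combined with $\rho\dot D=\dot D$ and $\rho p=p$ will give the conclusion $p\forces\tau\dot\sigma_A\in\dot D$.

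First I would fix a disjoint approximation $\cB$ with $\fix(\cB)\subseteq\sym(\dot D)$; by refining $\cB$ (which only shrinks $\fix(\cB)$, keeping it inside $\sym(\dot D)$) and enlarging by a finite piece, I may assume that its index set is an ordinal $\eta_0<\omega_1$ and that $\dom p\subseteq\bigcup\cB$. I would take $\eta:=\eta_0$, so that $I\subseteq\eta$ and $\rho p=p$ is automatic for every $\rho\in\fix(\cB)$.

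Next, given $\pi,\tau,A$ as in the statement, the equation $\rho\pi\dot\sigma_A=\tau\dot\sigma_A$ unpacks to $\iota(\rho)(\iota(\pi)\xi)=\iota(\tau)\xi$ for every $\xi\in A$, prescribing $\iota(\rho)$ on $\iota(\pi)``A$. By the permutability of $\cA$, a bounded permutation $\iota(\rho)$ of $\omega_1$ meeting this prescription and being the identity on $I$ can be realised by some $\rho\in\fix(\cB)$, provided the prescriptions are compatible: whenever $\xi\in A$ satisfies $\iota(\pi)\xi\in I$, one needs $\iota(\tau)\xi=\iota(\pi)\xi$. The hypothesis $p\forces\pi\dot\sigma_\eta=\tau\dot\sigma_\eta$ handles $\xi<\eta$ directly, and the remaining case $\xi\in A\setminus\eta$ reduces to the inclusion $\iota(\pi)^{-1}(I)\subseteq\eta$.

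The hard part is securing this last inclusion, which is precisely the step where the original paper had a minor gap. My plan for it is either to enlarge $\eta$ beyond a uniform bound on the supports relevant to conditions appearing in $\dot D$ (exploiting that $\dot D\in\HS$ constrains which $\pi$-fingerprints can occur), or to pre-compose $\pi$ with a $\fix(\cB)$-element that pushes $\iota(\pi)^{-1}(I)$ inside $\eta$ without disturbing the stem $\pi\dot\sigma_\eta$; once this is achieved the compatibility check is vacuous, the permutation $\rho$ is constructed, and the Symmetry Lemma delivers the conclusion as in the first paragraph.
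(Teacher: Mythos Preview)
Your setup is exactly right: choose $\cB$ with $\fix(\cB)\subseteq\sym(\dot D)$, take $\eta$ so that $\dom\cB\subseteq\eta$ and $\dom p\subseteq\bigcup\cB$, and then look for $\rho\in\fix(\cB)$ with $\iota(\rho)\circ\iota(\pi)\restriction A=\iota(\tau)\restriction A$. You also correctly isolate the obstruction: the identity requirement on $I=\dom\cB$ clashes with the prescription on $\iota(\pi)``A$ precisely when some $\xi\in A\setminus\eta$ has $\iota(\pi)(\xi)\in I$ but $\iota(\tau)(\xi)\neq\iota(\pi)(\xi)$ (and there is a symmetric injectivity obstruction when $\iota(\tau)(\xi)\in I$).

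The problem is that neither of your two patches closes this gap. For the first, $\eta$ must be fixed \emph{before} $\pi$ is quantified, and for any countable $\eta$ there is a bounded permutation $\iota(\pi)$ sending some element of $I$ above $\eta$; the fact that $\dot D\in\HS$ controls $\sym(\dot D)$ but places no bound whatsoever on which $\pi$ can satisfy $p\forces\pi\dot\sigma_A\in\dot D$, since $D$ is dense. For the second, the only way to preserve $p\forces\pi\dot\sigma_A\in\dot D$ via the Symmetry Lemma is to replace $\pi$ by $\rho_0\pi$ with $\rho_0\in\fix(\cB)$; but then $\iota(\rho_0\pi)^{-1}\restriction I=\iota(\pi)^{-1}\circ\iota(\rho_0)^{-1}\restriction I=\iota(\pi)^{-1}\restriction I$, because $\iota(\rho_0)$ fixes $I$ pointwise. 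So the offending preimages do not move at all.

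The paper does not try to repair the argument; it repairs the \emph{statement}. The corrected lemma adds the hypothesis that $\iota(\pi)^{-1}$ and $\iota(\tau)^{-1}$ agree on the relevant set as well (not merely $\iota(\pi)$ and $\iota(\tau)$), and under that extra assumption the compatibility check you wrote down becomes vacuous and your construction of $\rho$ goes through verbatim. The applications---genericity of $\varrho_1$ (where $\pi=\id$, so the extra hypothesis is trivial) and $\sigma$-distributivity (handled separately via the $L[c]$-isomorphism with $\Add(\omega_1,1)^L$)---survive this weakening. So your outline is the paper's proof of the \emph{corrected} lemma; the original statement, as written, is not what gets proved.
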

In the paper this lemma was used twice. The first consequence is the genericity of $\varrho_1$,\footnote{Or $\sigma$, in that context.} and the second is to show that $\dot\QQ$ is $\sigma$-distributive. The proof is very similar to the proof of \autoref{prop:genericity-1}. Nevertheless, when we have two permutations $\pi$ and $\tau$, we want to move $\tau$ so that it agrees with $\pi$.

For this we need not only that $\iota(\tau)\restriction A=\iota(\pi)\restriction A$, but also that their inverses agree on $A$. In the case of the genericity of $\varrho_1$ we take $\pi=\id$, in which case this property holds trivially. Indeed, this is the very strategy of the proof of \autoref{prop:genericity-1}.

To prove that $\dot\QQ$ is $\sigma$-distributive we interpret the lemma above as saying that for a condition $p\in\PP$, there is an ordinal $\eta$, such that deciding whether a condition from $\dot\QQ$, whose domain is at least $\eta$, lies in $\dot D$ depends only on the permutation $\pi$. We then utilise the fact $\PP$ is c.c.c.\ to show that this $\eta$ can be bound uniformly depending on $\dot D$. Now, if $\dot D_n$ are names for a sequence of dense open sets, we can uniformly bound all of them by some $\eta$. Now if we take any condition whose domain is at least this $\eta$, we may extend it into each $\dot D_n$, but this means that such extension lies, in fact, in all the $\dot D_n$ simultaneously, and therefore the intersection is also dense.

Once we add the condition that $\iota(\pi)^{-1}\restriction A=\iota(\tau)^{-1}\restriction A$, the proof as written in \cite{Karagila:Bristol} follows through. Alternatively, and perhaps more wisely, for proving the distributivity one should rely on the absoluteness proof that we used in this manuscript, which is also mentioned in the ``Abyss''.

Lemma~4.1 is precisely the same, in the context of successor iterations. There is no need to repeat that which has been said.
\subsection{Corrections to Proposition 4.4/4.15}\label{correction:4.4}
These two propositions are dealing with limit iterations. They show that $\tup{\varrho_\beta\mid\beta<\alpha}$ is symmetrically generic for $\PP_\alpha$, with Proposition~4.4 dealing with the case $\alpha=\omega$, which was treated separately in the paper.
\begin{proposition*}[\cite{Karagila:Bristol}, Proposition~4.4]
Suppose that $D\subseteq\PP_\omega$ is a symmetrically dense open set, then there is a sequence $\tup{\beta_n\mid n<\omega}$ such that $\tup{\dot\varrho_n\restriction\beta_n\mid n<\omega}\in D$.
\end{proposition*}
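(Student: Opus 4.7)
The plan is to adapt the single-step argument of \autoref{prop:genericity-1} to the product-like structure of $\PP_\omega$, performing the homogeneity manipulation coordinate by coordinate. First I would fix an excellent support $\vec H=\tup{\dot H_n\mid n<\omega}$ witnessing that $\dot D\in\IS_\omega$. By excellence, there is a finite set $F\subseteq\omega$ off of which $\dot H_n=\dot\sG_n$, and for each $n\in F$ we may pick a disjoint approximation $\cB_n$ (in the sense of the $n$th permutable family $\cA_n$) such that $\fix(\cB_n)\subseteq H_n$. Then, for $n\in F$, choose $\beta_n<\omega_{n+1}$ large enough that every $\xi\geq\beta_n$ satisfies that $A^n_\xi\cap\bigcup\cB_n$ is bounded, and set $\beta_n=0$ for $n\notin F$. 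This yields a finite-support object $\tup{\dot\varrho_n\restriction\beta_n\mid n<\omega}$ which, modulo the usual identifications, is a legitimate element of $\PP_\omega$.

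Next, using the symmetric density of $D$ (and the openness), I would take some $q\in D$ extending $\tup{\dot\varrho_n\restriction\beta_n\mid n<\omega}$. By the definition of $\dot\QQ_n$, the condition $q$ has the form $\gaut{\vec\pi}\tup{\dot\varrho_n\restriction\gamma_n\mid n<\omega}$ for some $\vec\pi\in\cG_\omega$ with $\iota(\pi_n)\restriction\beta_n=\id$ (since $q$ extends the chosen condition, which forces the first $\beta_n$ entries to be unchanged). The desired conclusion would follow if we could find $\vec\tau\in\vec H$ with $\iota(\tau_n)=\iota(\pi_n)^{-1}$ on $\gamma_n$, for then by the Symmetry Lemma and the fact that $\gaut{\vec\tau}$ respects $\dot D$,
\[
\gaut{\vec\tau}q=\tup{\dot\varrho_n\restriction\iota(\tau_n)``\gamma_n\mid n<\omega}\in\dot D,
\]
which gives a tuple of the form $\tup{\dot\varrho_n\restriction\beta'_n\mid n<\omega}$ as required.

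The crux is thus the existence of such a $\vec\tau$: for each $n\in F$ we need a permutation $\tau_n\in\fix(\cB_n)$ implementing $\iota(\pi_n)^{-1}$, and for $n\notin F$ the constraint is vacuous since $H_n=\sG_n$. Because $\iota(\pi_n)$ fixes $\beta_n$ pointwise, so does its inverse, and by our choice of $\beta_n$ every ordinal moved by $\iota(\pi_n)^{-1}$ corresponds to an $A^n_\xi$ almost disjoint from $\bigcup\cB_n$. Using the defining property of a permutable family, one can realise $\iota(\pi_n)^{-1}$ by a pointwise permutation of $\omega_n$ that is the identity on $\bigcup\cB_n$, which is exactly the requirement for membership in $\fix(\cB_n)$. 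Stringing these together produces $\vec\tau$.

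The main obstacle, and the source of the typo flagged in \autoref{foot:4.7}, is keeping the index bookkeeping straight: the permutation $\pi_n$ of $\omega_n$ implements a bounded permutation of $\omega_{n+1}$, and it is the approximation $\cB_n$ at level $n$ that constrains what $\tau_n$ can do to level $n+1$ objects such as the indices $\gamma_n$ of $\dot\varrho_n$. One must therefore be careful that $\beta_n$ is large relative to the level $n$ approximation when realising a permutation of the level $n+1$ indices, rather than confusing this with a level $n-1$ approximation. Once this indexing is set correctly, the rest of the argument is a verbatim transcription of the successor-step proof, and in the general limit case ($\alpha>\omega$) one further invokes the condensation point of $\vec H$, exactly as in the proof of $\varrho_\alpha$'s genericity, to subsume the tail of coordinates into the last nontrivial limit stage.
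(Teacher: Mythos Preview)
Your correction strategy for coordinates $n\geq 1$ is essentially the paper's own, but there is a genuine gap at coordinate $0$, and it is precisely the ``main point'' the paper singles out in \autoref{correction:4.4}. Conditions in $\PP_\omega$ live in $L$, and at coordinate $0$ they are actual Cohen conditions, not objects of the form $\gaut{\vec\pi}\dot\varrho_0\restriction\gamma_0$. So your claim that any $q\in D$ extending your starting condition has the shape $\gaut{\vec\pi}\tup{\dot\varrho_n\restriction\gamma_n\mid n<\omega}$ is simply false at $n=0$: whatever $q(0)$ is, no permutation $\tau_0\in\fix(\cB_0)$ will move it to an initial segment of the Cohen real $c$. (Relatedly, your starting condition itself is problematic at $n=0$: with your definition $\beta_0<\omega_1$ is read off $\cB_0$, but $\dot\varrho_0\restriction\beta_0$ is then a name for a restriction of $c$, not a Cohen condition in $L$, so it is not a coordinate of an element of $\PP_\omega$.)

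The paper fixes this by setting $Y_0=0$ so that the starting condition $p$ is trivial at coordinate $0$, and then---before any symmetry argument---uses the honest $L$-genericity of $c$: since $D\in L$ is dense below $p$, the set of $p_0(0)$ for $p_0\in D$, $p_0\leq p$, is dense in Cohen forcing, hence meets the generic $c$. Only after $p_0(0)\subseteq c$ is secured does the upward-homogeneity correction (your $\vec\tau$ argument) kick in, and it is applied recursively from coordinate $1$ upward, each $\tau_{n-1}\in H_{n-1}$ repairing coordinate $n$. Your write-up has the latter half of this, with the acknowledged index shift, but omits the anchoring step at $0$; without it nothing ties the condition you produce to the actual generic sequence $\tup{\varrho_n\mid n<\omega}$.
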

In the paper the proof takes a symmetrically dense open set $D$, and an excellent support $\vec H$ witnessing that. We then generate some sequence of domains such that when we extend the condition $\tup{\varrho_\beta\restriction X_\beta\mid\beta<\alpha}$ into $D$, then we can ``correct'' it using automorphisms which lie in $\vec H$. Namely, each coordinate of $\vec H$ is of the form $\fix(\cB_\beta)$, and we take $X_\beta$ to be $\sup\dom\cB_\beta+1$, and therefore our starting condition is $\tup{\varrho_\beta\restriction X_\beta\mid\beta<\alpha}$.

The idea is fine, and it is somehow intuitively clear what should happen. However the proof described above, taken from the ``Abyss'', will not work. The first hint is obvious: $\varrho_\beta$ has domain $\omega_\beta$, and $X_\beta$ is a bounded subset of $\omega_{\beta+1}$. We can resolve this issue by considering $\varrho_{\beta+1}\restriction X_\beta$, but this raises the obvious question, what should we do with limit steps and with $\varrho_0$?

Let us prove this proposition in its general form.

\begin{proposition}[\cite{Karagila:Bristol}, Proposition~4.15]
Let $\alpha$ be a limit ordinal, then the sequence $\tup{\varrho_\beta\mid\beta<\alpha}$ is symmetrically $L$-generic for $\PP_\alpha$.
\end{proposition}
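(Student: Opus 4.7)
The plan is to follow the corrected version of the strategy sketched in the Abyss, paying careful attention to which index set governs each coordinate of the iteration. Fix a symmetrically dense open $D \subseteq \PP_\alpha$ in $L$, witnessed by an excellent support $\vec H = \tup{\dot H_\beta \mid \beta < \alpha}$, and let $F = \{\beta < \alpha : \dot H_\beta \neq \dot\sG_\beta\}$; by excellence $F$ is finite and decided outright. The goal is to produce a condition $q \in D$ which lies in the filter generated by $\tup{\varrho_\beta \mid \beta<\alpha}$. To do so, we build a starting condition $p_0 \in \PP_\alpha$ satisfied by $\tup{\varrho_\beta}$ and $\vec H$-invariant in each of its coordinates, take any extension $q \leq p_0$ in $D$ (which exists by density), and then apply an automorphism $\gaut{\vec\pi}$ with $\vec\pi \in \vec H$ to rotate $q$ back below $\tup{\varrho_\beta}$. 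Since $D$ is $\vec H$-invariant, the rotated condition remains in $D$.

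The correction of the Abyss lies in the definition of $p_0$. For $\beta \notin F$ set $p_0(\beta)=\1$. For $\beta \in F$ which is $0$ or successor, $\dot H_\beta$ is (a name for) $\fix(\cB_\beta)$ for some disjoint approximation $\cB_\beta$. The key observation is that $\varrho_\beta$ is indexed by $\omega_\beta$, whereas $\dom\cB_\beta$ lives in $\omega_{\beta+1}$; what is correct about the Abyss intuition is that $\fix(\cB_\beta)$ acts as the identity on $\bigcup\cB_\beta \subseteq \omega_\beta$, and this is the set on which we should anchor the approximation. We therefore set $p_0(\beta) = \dot\varrho_\beta \restriction Y_\beta$ for some bounded $Y_\beta \subseteq \bigcup\cB_\beta$ (taking $Y_\beta$ as an initial segment of $\bigcup\cB_\beta$ if cardinality requires it). This is $\fix(\cB_\beta)$-invariant because each $\pi \in \fix(\cB_\beta)$ fixes $Y_\beta$ pointwise, and it is satisfied by $\varrho_\beta$ by construction. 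In the limit case $\dot H_\beta = K_{\eta_\beta,f_\beta}$, and $p_0(\beta)$ is defined analogously as $\varrho_\beta\restriction(E_\beta,A_\beta)$ for a bounded $E_\beta \subseteq f_\beta\da$ and a bounded $A_\beta \subseteq \eta_\beta$; this is $K_{\eta_\beta,f_\beta}$-invariant by the very definition of $K_{\eta_\beta,f_\beta}$.

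Given $q \leq p_0$ with $q \in D$, we construct $\vec\pi \in \vec H$ coordinate by coordinate. At $\beta \in F$ the extra information in $q(\beta)$ beyond $p_0(\beta)$ is concentrated outside $Y_\beta$ (respectively outside the bounded rectangle $(E_\beta,A_\beta)$). Using the upwards homogeneity of $\PP_\beta\ast\dot\QQ_\beta$ together with the inductive fact that $\varrho_\beta$ is $M_\beta$-generic for $\dot\QQ_\beta$, we can find $\pi_\beta \in H_\beta$ that sends the extra information to fresh positions at which $\varrho_\beta$ happens to realise the same pattern --- this is a standard density argument, since the set of $M_\beta$-conditions realising a prescribed finite pattern at some fresh location is dense open and lies in $M_\beta$. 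Assembled into $\gaut{\vec\pi}$ these corrections yield a condition $\gaut{\vec\pi}q$ satisfied by $\tup{\varrho_\beta \mid \beta<\alpha}$, which by $\vec H$-invariance still belongs to $D$.

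The main obstacle will be coordinating the $\pi_\beta$ across coordinates: each correction at step $\beta$ must be consistent with the corrections already chosen at lower steps, because the name $q(\beta)$ is a $\PP_\beta$-name whose interpretation depends on the earlier coordinates. This is precisely the role of the excellent support --- the non-trivial coordinates form a finite, pre-decided set, so the recursion is finite and can be carried out in order. Modulo this, the proof is structurally the same as the Abyss's, and the case $\alpha=\omega$ of Proposition~4.4 is recovered as the special case where the $\beta$-th coordinate, for successor $\beta$, is handled uniformly by bounded subsets of $\omega_\beta$ rather than bounded subsets of $\omega_{\beta+1}$.
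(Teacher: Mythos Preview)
There is a genuine gap in your correcting step, and it stems from an off-by-one confusion about which coordinate's automorphisms fix which coordinate's condition. You anchor $p_0(\beta)$ to $H_\beta$ by taking $Y_\beta\subseteq\bigcup\cB_\beta$, and then you try to correct $q(\beta)$ by an automorphism $\pi_\beta\in H_\beta=\fix(\cB_\beta)$. But $\pi_\beta$ is required to be the identity on all of $\bigcup\cB_\beta$, which is an \emph{unbounded} subset of $\omega_\beta$, while your $Y_\beta$ is only a bounded piece of it. When you extend $p_0$ to $q\in D$ you have no control over where the new information in $q(\beta)$ lands; in particular it may land in $\bigcup\cB_\beta\setminus Y_\beta$, and there $\pi_\beta$ cannot move it at all. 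So the ``send the extra information to fresh positions'' step simply fails: the permutation you need does not live in $H_\beta$.

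The paper's fix is not a patch of this idea but a different mechanism. The anchor at coordinate $\beta+1$ is taken from the filter at coordinate $\beta$ (so $Y_{\beta+1}=\sup\dom\cB_\beta$, an index set in $\omega_{\beta+1}$), and the correction at coordinate $\beta+1$ is carried out by automorphisms from $H\restriction\beta$ acting via upwards homogeneity --- an automorphism $\pi\in\fix(\cB_\beta)$ acts on the \emph{name} $p_0(\beta+1)$ by permuting the range through $\iota(\pi)$, which fixes $\dom\cB_\beta$ and hence the anchor. This shift is exactly what makes the recursion go through. The very first coordinate has nothing below it, so there is no upwards homogeneity to invoke; the paper handles it separately by observing that since $D\in L$ one may choose the extension $p_0$ so that $p_0(0)$ already lies in the filter of the $L$-generic Cohen real $c$. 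You gesture at genericity of $\varrho_\beta$ over $M_\beta$, but for $\beta=0$ this is precisely that step, and it must be done \emph{before} extending into $D$, not afterwards by a permutation.
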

\begin{proof}
Let $D$ be a symmetrically dense open set. Our goal is to find a condition of the form $\tup{\varrho_\beta\restriction X_\beta\mid\beta<\alpha}$, for appropriate $X_\beta$, in $D$.

  Let $\vec H$ denote the excellent support witnessing that $D$ is a symmetrically dense open set. For $\beta<\alpha$ let $\xi_{\beta+1}$ the ordinal defined by either,
  \begin{enumerate}
  \item when $\beta$ is $0$ or a successor let $\cB_\beta$ be such that $H_\beta=\fix(\cB_\beta)$, and define $\xi_{\beta+1}=\sup\dom\cB_\beta$;
  \item when $\beta$ is a limit let $\xi_{\beta+1}$ be an ordinal $\xi$ such that for some $f\in\prod\SC(\omega_\beta)$, $H_\beta=K_{\xi,f_\beta}$;
  \item when $H_\beta=\sG_\beta$, set $\xi_{\beta+1}=0$.
  \end{enumerate}

  For a limit ordinal $\beta$, let $E_\beta\subseteq\prod\SC(\omega_\beta)$ be defined by $\prod_{\gamma<\beta}(\xi_{\gamma+1}+1)$, and let $A_\beta$ denote $\sup\{\gamma<\beta\mid H_{\gamma+1}\neq\sG_{\gamma+1}\}$; if $\beta$ is a limit ordinal such that $H_\gamma=\sG_\gamma$ for all $\gamma\geq\beta$, set $E_\beta=A_\beta=\varnothing$. Now define $Y_\beta$ as $0$ for $\beta=0$, $\xi_\beta$ for successors, and $(E_\beta,A_\beta)$ for limit ordinals.

  The goal is to find the ``domain'' over which $\vec H$ must be the identity, and use that to start our journey into $D$. Let $p$ be the condition $\tup{\varrho_\beta\restriction Y_\beta\mid\beta<\alpha}$. By its very definition, $p$ is not moved by any $\vec\pi\in\vec H$. Using the density of $D$ we can now extend $p$ to a condition $p_0\in D$. Moreover, since $D$ is in $L$, we may assume that the first coordinate of $p_0$ agrees with $\varrho_0$, i.e.\ the Cohen real $c$, and by shrinking $H_0$ if necessary, we may assume that $p_0(0)$ is not moved by automorphisms in $H_0$.

  If $p_0$ is not compatible with $\tup{\varrho_\beta\mid\beta<\alpha}$, let $\beta$ be the least coordinate witnessing that. By the choice of $Y_\beta$ we can find $\vec\pi\in H\restriction\beta$, and if $\beta$ is not a limit ordinal then we may assume $\vec\pi$ has a single non-trivial coordinate too, such that by taking $p_1=\gaut{\vec\pi}p_0$, the following hold:
  \begin{enumerate}
  \item $\supp(p_0)=\supp(p_1)$,
  \item $p_0\restriction\beta=p_1\restriction\beta$,
  \item $p_1(\beta)$ is indeed compatible with $\varrho_\beta$.
  \end{enumerate}
  The reason we can do that is exactly upwards homogeneity combined with our choice of $p$, which $p_0$ extended. As we did not increase the non-trivial coordinates when moving from $p_0$ to $p_1$, we may proceed by recursion and after finitely many steps the process must halt with some $p_n\in D$.
\end{proof}

Indeed, the main point is the fact that we may assume that the first coordinate, for which true genericity is already assumed, is compatible with the Cohen real, and then we can modify any further coordinates recursively using upward homogeneity, combined with the fact that we only need to change things outside of the domains we found, which means that the needed automorphisms can be found in $\vec H$.
\section{Removing the constructibility assumptions}
Now that we have constructed the Bristol model, and we have a good idea about how the construction works, we can ask the obvious question: do we really need $V=L$ in the ground model? The answer, of course, is not really.

We have merely used three assumptions: $\GCH$, $\square^*_\lambda$ for singular $\lambda$, and global choice for fixing the Bristol sequence. Of those three, the last one can be easily dispensed, and we will discuss this in \autoref{sect:gaps}. So we are left with only two assumptions which are compatible with a large range of models, including all known inner models from large cardinals below a subcompact.

This raises an interesting question, of course. What kind of large cardinals can the Bristol model accommodate, assuming they existed in the ground model?
\begin{proposition}\label{prop:0-sets}
If $A$ is a set of ordinals in the Bristol model, then there is a real number $r$ such that $A\in V[r]$, and moreover $r\in V$ or $r$ is Cohen over $V$.
\end{proposition}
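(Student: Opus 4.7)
The plan is to exploit the fact, transparent from each of the three viewpoints in \autoref{sect:cerberus}, that the Bristol model $M$ is an inner model of $V[c]$. In the ``Nature'' presentation each $V_{\omega+\alpha}^M$ is defined inside $V[c]$ as a union of rank segments of models of the form $L(V_{\omega+\alpha},\varrho_\alpha\restriction T^\alpha_\xi)$; in the ``Cause'' presentation each intermediate stage $M_\alpha$ is a symmetric extension sitting inside $V[c]$; and in the ``Accident'' presentation the symmetrically generic filters used to interpret $\IS$ are themselves elements of $V[c]$. Any of these suffices to conclude $M\subseteq V[c]$, so that any set of ordinals $A\in M$ automatically satisfies $A\in V[c]$.

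From here I would split into the two cases flagged by the statement. If $A\in V$, take $r$ to be any real in $V$ (for instance $r=\emptyset$); then $A\in V\subseteq V[r]$ and the first clause of the disjunction is satisfied. Otherwise take $r=c$ itself: by hypothesis $c$ is Cohen-generic over $V$, and $A\in V[c]=V[r]$, so the second clause of the disjunction is satisfied.

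The main obstacle is only the verification $M\subseteq V[c]$, which is immediate from the construction and already explicit in \autoref{sect:cerberus}; no combinatorics of the symmetric iteration are needed for the bare conclusion. If one wanted a more refined $r$ tailored to $A$ in place of the universal choice $r=c$, the natural route is to trace the argument of \autoref{prop:reals-support} (which, as noted in that section, extends verbatim to arbitrary sets of ordinals at the first stage): for $A\in M_1$ one obtains $A\in V[c\cap\bigcup\cB]$ for some countable disjoint approximation $\cB\in V$, and $r=c\cap\bigcup\cB$ is Cohen-generic over $V$ as the restriction of a Cohen real to a ground-model coordinate set. For $A$ at a later stage, one would first invoke \autoref{thm:preservation} to locate $A$ in some $M_{\alpha+1}$, and then unfold the symmetric name of $A$ through the iteration using an excellent support to bound the information needed from each iterand back down to a corresponding piece of $c$. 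This sharper version is not needed for the proposition as stated, however.
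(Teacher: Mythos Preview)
Your argument is correct for the proposition as literally stated, but it is not the paper's argument. The paper does not simply take $r=c$; instead it observes that $V[A]$ is a model of $\ZFC$ lying between $V$ and $V[c]$, and then appeals to the classification of intermediate $\ZFC$ models of a Cohen extension (each is either $V$ itself or a Cohen extension $V[r]$) to obtain a real $r$ with $V[A]=V[r]$. This is a genuinely stronger conclusion than yours: the paper's $r$ actually generates $A$ over $V$, whereas your $r=c$ merely contains it. Your route is more elementary and entirely sufficient for the bare statement and for the immediate corollary about large cardinals defined by sets of ordinals (any putative witness in $M$ already lives in $V[c]$). The paper's route, by contrast, buys the equality $V[A]=V[r]$ in one line from standard structure theory, avoiding altogether the symmetric-name analysis you sketch at the end; your proposed refinement via \autoref{prop:reals-support} and excellent supports would eventually recover a tailored $r$, but the paper sidesteps that work entirely by invoking the $\ZFC$ intermediate model theorem.
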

\begin{proof}
Note that $V[A]$ is a model of $\ZFC$ intermediate to $V$ and $V[c]$, and therefore $V[A]$ is equivalent to some $V[r]$.
\end{proof}

This leads to an immediate corollary: if $\kappa$ is a large cardinal defined by the existence (or lack thereof) of sets of ordinals, and this largeness is preserved by adding a Cohen real, then $\kappa$ remains large in the Bristol model.

For example, if $\kappa$ is Mahlo, then the stationary set of regular cardinals remain stationary in $M$, and thus $\kappa$ remains Mahlo. If we define a weakly compact cardinal by stating that every colouring of $[\kappa]^2$ in $2$ colours has a homogeneous subset, this too is given by sets of ordinals, and so it continues to hold in $M$.

We will see in \autoref{sect:choice} that this can be extended from sets of ordinals to sets of sets of ordinals, and so on, as long as we iterate power sets less than $\kappa$ times, the largeness remains. So for example, any measure on a ground model measurable cardinal will have a unique extension in the Bristol model. Strong cardinals, defined by extenders, are also preserved.

\subsection{Limitations}
Despite this handsome accommodation of large cardinal assumptions in the ground model, as well as in the Bristol model itself, we can put a stop to this. Indeed, if $\kappa$ is a supercompact cardinal, then there is a notable shortage of $\square^*_\lambda$ sequences for singular $\lambda$ such that $\cf(\lambda)<\kappa<\lambda$. We may ask ourselves, perhaps we can salvage permutable scales without having $\square^*_\lambda$?

\begin{theorem}[The Bristol group]
Suppose that $\kappa$ is a supercompact cardinal and $\lambda>\kappa>\cf(\lambda)$. Then there are no permutable scales on $\prod\SC(\lambda)$.
\end{theorem}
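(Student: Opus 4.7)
The plan is to assume, toward a contradiction, that $\vec f = \langle f_\alpha : \alpha < \lambda^+\rangle$ is a permutable scale on $\prod\SC(\lambda)$ and then to extract a combinatorial contradiction by reflecting the scale through a supercompactness embedding. Fix $j : V \to M$ witnessing $\lambda^+$-supercompactness of $\kappa$, so that $\crit(j) = \kappa$, $j(\kappa) > \lambda^+$, and $M^{\lambda^+}\subseteq M$. In particular $j[\lambda^+]\in M$, and because $j(\lambda^+)$ is a regular cardinal of $M$ strictly above $\lambda^+$, the ordinal $\delta := \sup j[\lambda^+]$ lies strictly below $j(\lambda^+)$. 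Applying $j$, the sequence $j(\vec f) = \langle g_\beta : \beta < j(\lambda^+)\rangle$ is, in $M$, a permutable scale on $\prod\SC(j(\lambda))^M$ with $g_{j(\alpha)} = j(f_\alpha)$ for all $\alpha < \lambda^+$.

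Inside $M$, apply the disjoint refinement property of $j(\vec f)$ to the set $I := j[\lambda^+]\cup\{\delta\}$, which has $M$-cardinality $\lambda^+ < j(\lambda^+)$. This yields a function $d : I \to \SC(j(\lambda))^M$ such that for distinct $\gamma,\gamma' \in I$ and every $\theta \in \SC(j(\lambda))^M$ with $\theta \geq \max(d(\gamma),d(\gamma'))$, one has $g_\gamma(\theta)\neq g_{\gamma'}(\theta)$. The crucial point is that at each fixed $\theta \in \SC(j(\lambda))^M$ the set $\{\gamma \in I : d(\gamma)\leq\theta\}$ has size at most $\theta$, since at such $\gamma$ the values $g_\gamma(\theta)$ are pairwise distinct members of $\theta$. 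Because $\cf(j(\lambda))^M = j(\cf(\lambda)) = \cf(\lambda) < \kappa \leq \lambda^+$ and $|I|^M = \lambda^+$, the function $d$ cannot have range cofinal in $j(\lambda)$: it is pigeonholed into an interval between two successive terms of a fixed cofinal sequence of length $\cf(\lambda)$ in $j(\lambda)$, placing $\lambda^+$ elements of $I$ below a single $\theta^* \in \SC(j(\lambda))^M$ with $d(\gamma)\leq\theta^*$ for all those $\gamma$.

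Combining this with the scale property $g_{j(\alpha)} <^* g_\delta$ and with the rigidity extracted from implementing transpositions $(j(\alpha),\delta)$ in the permutable scale $j(\vec f)$ then yields the contradiction: on the one hand the $\lambda^+$ distinct values $\{g_\gamma(\theta^*) : \gamma \in I,\ d(\gamma)\leq\theta^*\}$ are confined to the ordinal $\theta^*$ while being forced to be mutually distinct, so $\theta^* \geq \lambda^+$; on the other hand, by exploiting upwards homogeneity of the implementing sequences of these transpositions (and the fact that they must fix the coordinates of every other $g_{j(\beta)}$), the common $\theta^*$ can be chosen below $\lambda^+$. The main obstacle I expect is this final synthesis step: the counting bound from disjoint refinement is easy, and the rigidity forced by implementing many transpositions $(j(\alpha),\delta)$ simultaneously follows directly from permutability, but arranging these to produce a single coordinate $\theta^*$ satisfying both $\theta^* \geq \lambda^+$ and $\theta^* < \lambda^+$ requires careful use of both $\cf(\lambda)<\kappa$ and the $\lambda^+$-closure of $M$ to ensure that the relevant combinatorial data live inside $M$ and can be reflected back.
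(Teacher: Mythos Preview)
Your proposal has a genuine gap at exactly the point you flag as the ``main obstacle'': the final synthesis step is not a proof but a hope. You correctly extract from the pigeonhole argument that any $\theta^*$ below which $\lambda^+$ many of the $d(\gamma)$ fall must satisfy $\theta^*\geq\lambda^+$. But the claim that ``by exploiting upwards homogeneity of the implementing sequences of these transpositions\dots\ the common $\theta^*$ can be chosen below $\lambda^+$'' is not an argument. The ordinal $\theta^*$ was produced by the disjointification function $d$, which has nothing to do with implementing transpositions; implementing $(j(\alpha),\delta)$ gives you a sequence $\vec\pi^\alpha$ of coordinatewise permutations, and nothing about such sequences forces any bound on where $d$ lands. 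You have two unrelated objects and no mechanism connecting them. There is also a prior issue: the pointwise-disjointification function $d$ you invoke is not part of the \emph{definition} of a permutable scale (which only asserts that bounded permutations are implemented); it is a feature of \emph{better} scales used in the paper to prove that better scales are permutable, and its existence for an arbitrary permutable scale over a product of cofinality $<\lambda^+$ is itself nontrivial.

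The paper's proof takes a completely different and much cleaner route. Rather than seeking a numerical contradiction at a single coordinate, it shows that under supercompactness a permutable scale would implement \emph{every} permutation of $\lambda^+$, bounded or not. Given any permutation $\pi$ of $\lambda^+$, one observes that $\nu=\sup j``\lambda^+$ is closed under $j(\pi)$, so $\tau=j(\pi)\restriction\nu$ is a \emph{bounded} permutation of $j(\lambda^+)$; by elementarity $j(F)$ is permutable in $W$, so some $\vec\tau$ implements $\tau$. One then pulls back: define $\pi_{\mu^+}(\xi)=\zeta$ whenever $\tau_{j(\mu^+)}(j(\xi))=j(\zeta)$, and identity otherwise. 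A direct computation shows $\vec\pi$ implements $\pi$. Since there are more permutations of $\lambda^+$ than there are sequences in $\prod_{\theta\in\SC(\lambda)}\sym(\theta)$ (this is where $2^\lambda<2^{\lambda^+}$ enters), this is impossible. The key idea you are missing is this reflection of an arbitrary permutation to a bounded one on the $j$-side and the explicit pull-back of the implementing sequence.
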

Recall that we are still working under the assumption of $\GCH$. Removing it will require adding $2^\lambda<2^{\lambda^+}$, which itself is a harmless assumption as it holds for all strong limit cardinals above $\kappa$.
\begin{proof}
  Let $F=\{f_\alpha\mid\alpha<\lambda^+\}$ be a scale in $\prod\SC(\lambda)$, and let $\pi$ be a permutation of $\lambda^+$, not necessarily bounded, which is not implemented by any sequence of permutations, $\vec\pi$. Pick $j\colon V\to W$ an elementary embedding with critical point $\kappa$ witnessing that $\kappa$ is at least $\lambda^+$-supercompact, so in particular $j(\kappa)>\lambda^+$. We denote by $G=\{g_\alpha\mid\alpha<j(\lambda^+)\}$ the scale $j(F)$, which is a scale in $j(\prod\SC(\lambda))$ which is equal to $\prod\SC(j(\lambda))^M$.

  Let $\nu=\sup j``\lambda^+$, then $\nu$ is closed under $j(\pi)$, so we can let $\tau$ denote $j(\pi)\restriction\nu$, which is a bounded permutation of $j(\lambda^+)$.

  Suppose that there was a sequence of permutations $\vec\tau$ that implemented $\tau$, then for $\alpha<\lambda^+$ and $\mu<\lambda$, we have $g_{j(\alpha)}(j(\mu^+))=j(f_\alpha(\mu^+))$ and $\tau(j(\alpha))=j(\pi(\alpha))$. Combining this with the assumption that $\vec\tau$ implements $\tau$, we get that \[\tau_{j(\mu^+)}(j(f_\alpha(\mu^+)))=j(f_{\pi(\alpha)}(\mu^+)).\]

  We use this to define $\pi_{\mu^+}\colon\mu^+\to\mu^+$. If $\tau_{j(\mu^+)}(j(\xi))=j(\zeta)$ for some $\zeta$, define $\pi_{\mu^+}(\xi)=\zeta$; otherwise $\pi_{\mu^+}(\xi)=\xi$. This is well-defined, since $\tau_{j(\mu^+)}$ is itself a permutation of $j(\mu^+)$, and so $j(\zeta)<j(\mu^+)$. This is also a permutation by similar reasons.

  Finally, we claim that $\vec\pi$ implements $\pi$. Fix $\alpha<\lambda^+$, then for any large enough $\mu^+<\lambda$, $\tau_{j(\mu^+)}(j(f_\alpha(\mu^+)))=j(f_{\pi(\alpha)}(\mu^+))$, which means that we defined $\pi_{\mu^+}$ to be exactly $f_{\pi(\alpha)}(\mu^+)$.

  We have shown that if $F$ can be used to implement all bounded permutations, then it can be used to implement all permutations, but that is definitely impossible on grounds of cardinal arithmetic.
\end{proof}
\section{Gaps in the Multiverse}\label{sect:gaps}
The Bristol model, as we said at first, is a particularly striking counterexample to our understanding of intermediate models when the axiom of choice is not assumed. Working in $V$'s meta-theory, we can consider the collection of all intermediate models between $V$ and $V[c]$. We remarked before, those models which satisfy $\ZFC$ are exactly $V$ itself or Cohen extensions of the form $V[r]$ for some $r\in\RR^{V[c]}$.

What about models of $\ZF$? We have, of course, symmetric extensions. These were studied extensively by Grigorieff in \cite{Grigorieff:1975}, and later by Usuba in \cite{Usuba:2019}. Grigorieff proved that if $M$ is a model such that $V\subseteq M\subseteq V[G]$, then $M$ is a symmetric extension given by the same forcing used to add $G$ if and only if $M=(\HOD_{V\cup x})^{V[G]}$ for some set $x\in V[G]$. Usuba extended this and showed that in general, $M$ is a symmetric extension of $V$ if and only if $M=V(x)$ for some set $x$.

The construction of the Bristol model shows that there is indeed a difference between the two results. Indeed, by counting the number of possible automorphism groups and filters of groups we can see that there cannot be more than $\aleph_3$ distinct symmetric extensions of $\Add(\omega,1)$ over $L$.\footnote{We can improve this counting argument to show no more than $\aleph_2$, actually.} Yet, the construction of the Bristol model goes through a proper class of steps, and even if we did not formally prove that each separate one is of the form $L(x)$, we may take $L(V_\alpha^M)$ as our models. By Usuba's result, these are all symmetric extensions of $L$, but of course most of them are not symmetric extensions where the forcing used is the Cohen forcing.

So when we consider the multiverse of symmetric extensions of $L$, even those that are landlocked inside $L[c]$, we seem to have two different options. But we want to study all intermediate models, and these include the Bristol model, which is very much not a symmetric extension of $L$, by any means, as Usuba's result indicate. So what can we say about the multiverse of $\ZF$ models?

First, let us ask, how many Bristol models are there? Of course, there is the canonical one, given by the $<_L$-minimal Bristol sequence. But there are certainly more. Simply by removing some of the sets or functions in any given point in the Bristol sequence we invariably create a new Bristol model.
\begin{proposition}
Assuming $\GCH$ and that $\square^*_\lambda$ holds for all singular $\lambda$, there is a class forcing which does not add sets whose generic is a Bristol sequence.\qed
\end{proposition}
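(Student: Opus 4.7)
I take $\PP$ to be the class forcing whose conditions are set-sized initial segments of Bristol sequences. Concretely, a condition is a sequence $p=\tup{\cA^p_\beta\mid\beta<\ell(p)}$ for some $\ell(p)\in\Ord$, where $\cA^p_\beta$ is a permutable family on $\omega_\beta$ when $\beta$ is $0$ or successor, and a permutable scale in $\prod\SC(\omega_\beta)$ when $\beta$ is a limit. The ordering is end-extension: $p\leq q$ iff $\ell(q)\leq\ell(p)$ and $p\restriction\ell(q)=q$. A generic filter $G$ then yields the class Bristol sequence $\bigcup G$.

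\textbf{Density.} For each $\alpha\in\Ord$ I would show that $D_\alpha=\{p\mid\ell(p)\geq\alpha\}$ is dense below every condition, by induction on $\alpha$. At a non-limit stage $\beta$ one appends a permutable family on $\omega_\beta$, and at a limit stage $\beta$ one appends a permutable scale in $\prod\SC(\omega_\beta)$. Both exist in the ground model by the propositions recalled in \autoref{sect:tools}: $\GCH$ suffices at regular cardinals, and $\GCH$ together with $\square^*_\lambda$ suffices at singular $\lambda$. The limit step of the induction is handled by the $\Ord$-closure below.

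\textbf{$\Ord$-closure and set-preservation.} The forcing $\PP$ is $\Ord$-closed: any set-length descending chain $\tup{p_\gamma\mid\gamma<\delta}$ has the union $p:=\bigcup_{\gamma<\delta}p_\gamma$ as a lower bound, which is a legitimate condition because the chain is end-extending and the union has set-size length $\sup_{\gamma<\delta}\ell(p_\gamma)<\Ord$. From this I would deduce that no new sets are added by the standard tameness argument: given a $\PP$-name $\dot x$ and a condition $p\forces\rank(\dot x)<\check\alpha$, build a descending chain below $p$ of length $|V_\alpha|^+$ that decides ``$\check\sigma\in\dot x$'' for every $\sigma\in V_\alpha$; a lower bound of this chain, furnished by $\Ord$-closure, forces $\dot x$ to equal a specific ground-model set. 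Hence $V^\PP\cap V=V$ on sets, and every set in the extension is already in $V$.

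\textbf{Main obstacle.} The combinatorics is almost entirely outsourced to the already-established existence of permutable families and scales, so the genuine work is not calculational but formal: one must verify that $\PP$ is a \emph{tame} class forcing (in the sense of Friedman), so that the atomic forcing relation is definable over the ground model and ``generic filter'' makes sense as meeting every ground-model dense class. Once tameness is established, the $\Ord$-closure argument above is classical and unproblematic. A mild subsidiary point is that genericity of $\bigcup G$ requires meeting the dense classes $D_\alpha$ for all $\alpha\in\Ord$; this is exactly what class-generic filters do, and no new technology is needed.
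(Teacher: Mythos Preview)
Your proposal is correct and takes exactly the same approach as the paper: the paper's ``proof'' is the single sentence ``This is done, of course, by approximating the Bristol sequence with set-length initial segments'' followed by a \qed, so you have in fact written out in detail what the author left implicit. Your discussion of density (via the existence results for permutable families and scales), $\Ord$-closure, and the resulting set-preservation is the standard unpacking of that one sentence, and the tameness remark is a reasonable caveat even if the author evidently regards it as routine.
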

This is done, of course, by approximating the Bristol sequence with set-length initial segments. This is also the way we can remove global choice from the assumptions. Indeed, if $V$ was a model of $\ZFC$, add a generic Bristol sequence, use it to define a Bristol model between $V$ and $V[c]$, where $c$ is a Cohen real, and promptly forget about this generic sequence.

As very clearly the Bristol model is definable from its Bristol sequence in $V[c]$, this means that there may be undefinable Bristol models. And indeed, this can very much be the case.
\begin{theorem}
Suppose that $V$ is a countable transitive model of $\ZFC+\GCH+\square^*_\lambda$ for all singular cardinals $\lambda$, and let $c$ be a Cohen real over $V$. Then there are uncountably many Bristol models intermediate between $V$ and $V[c]$.
\end{theorem}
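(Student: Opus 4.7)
The approach is to vary the Bristol sequence by means of the class forcing $\mathbb{P}_B$ from the preceding proposition. Since $V$ is countable in the ambient universe, $\mathbb{P}_B$ admits $2^{\aleph_0}$-many $V$-generic filters in the ambient universe, each yielding a distinct Bristol sequence $B$. For each such $B$ I would run the construction of \autoref{sect:decoding} with $B$ in place of the canonical $<_L$-least sequence; the arguments of that section never used canonicality, only the defining properties of a Bristol sequence, so they apply verbatim and produce a subclass $M_B \subseteq V[c]$ which is a model of $\ZF$ lying strictly between $V$ and $V[c]$.

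The heart of the proof is to show that $B \mapsto M_B$ takes uncountably many values. I focus on stage $1$. The reals of $M_B$ are $\bigcup_{\alpha < \omega_1^V} \RR^{V[c \cap A^B_\alpha]}$, where $\{A^B_\alpha \mid \alpha < \omega_1^V\} = B(0)$ is the stage-$0$ permutable family. Adapting \autoref{prop:reals-support} from $L$ to $V$, a real $c \cap X$ with $X \in V \cap [\omega]^\omega$ lies in $M_B$ if and only if $X \subseteq^* \bigcup \mathcal{C}$ for some countable disjoint approximation $\mathcal{C}$ of $B(0)$. Thus whether $c \cap X \in M_B$ is determined by $B(0)$ alone. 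A density argument on $\mathbb{P}_B$ should show that for a fixed, suitably ``generic'' $X \in V \cap [\omega]^\omega$, the set of conditions forcing ``$X$ is covered'' and the set forcing its negation are both dense, so both possibilities are realised cofinally often among the generics. Iterating the dichotomy over countably many mutually independent $X$'s in $V$ yields $2^{\aleph_0}$-many distinct values of $\RR^{M_B} \cap \{c \cap X \mid X \in V\}$, hence $2^{\aleph_0}$-many distinct Bristol models.

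The main obstacle lies in the density argument of the previous paragraph: one must verify that $\mathbb{P}_B$ is flexible enough for a single fixed $X$ to be forced inside, or outside, the ``covered'' set at will, and that this can be done independently for countably many $X$'s. This reduces to a concrete combinatorial statement about permutable families: given any approximation of a permutable family together with a fixed $X \in V \cap [\omega]^\omega$ not yet constrained, one can extend the approximation to make $X$ either almost-contained in a countable union of family members, or far from any such union. Once this is settled, the counting argument is routine and yields the uncountably (indeed $2^{\aleph_0}$) many Bristol models asserted.
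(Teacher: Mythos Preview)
Your approach has a genuine and fatal gap, not merely the density argument you flag. You try to distinguish uncountably many Bristol models by their sets of reals, but $\RR^{M_B}$ depends only on $B(0)$, the stage-$0$ permutable family. Indeed, by the stabilisation of rank initial segments, $V_{\omega+1}^{M_B} = V_{\omega+1}^{M_1}$, and $M_1$ is determined by $c$ and $B(0)$ alone. Now $B(0)$ is a \emph{set} in $V$, and $V$ is countable in the ambient universe, so there are only countably many candidates for $B(0)$. Hence there are at most countably many possible values of $\RR^{M_B}$, and your plan to realise $2^{\aleph_0}$ many distinct values of $\RR^{M_B}\cap\{c\cap X\mid X\in V\}$ cannot succeed. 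Your density intuition is correct in that for any single $X$ both ``covered'' and ``not covered'' can be arranged, but these choices are not independent across infinitely many $X$'s: all of them are decided simultaneously once a condition in $\mathbb{P}_B$ fixes $B(0)$, and there are only countably many such decisions available.

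The paper's argument sidesteps this by varying the Bristol sequence across \emph{all} ordinals rather than at stage $0$ alone. At each successor stage $\alpha$ one has (for instance) a binary choice between two permutable families with almost-disjoint unions, say one living on the even ordinals of $\omega_\alpha$ and one on the odd. Two Bristol sequences that differ at stage $\alpha$ in this way give models whose $V_{\omega+\alpha+1}$'s differ, by the obvious adaptation of \autoref{prop:reals-support}. A pattern of such binary choices is a subset of $\Ord^V$; although each individual choice is a set in $V$, the whole pattern is a class and need not lie in $V$. Since $\Ord^V$ is countably infinite in the ambient universe, there are $2^{\aleph_0}$ such patterns, hence uncountably many distinct Bristol models. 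No density argument or class-forcing genericity is needed; explicit sequences suffice.
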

\begin{proof}
Given two Bristol sequences such that $\cA_0$ and $\cA'_0$ are the two permutable families on $\omega$ and such that $\bigcup\cA_0\cap\bigcup\cA_0'$ is finite, the two Bristol models are distinct. This can be easily arranged, for example taking $\cA_0$ to only have subsets of even integers and $\cA'_0$ only has subsets of odd integers. This can be extended to any other step in the Bristol sequence. We can now easily construct uncountably many distinct Bristol models by simply considering with each subset of $\Ord^V$ how to modify a given, or indeed a generic, Bristol sequence.
\end{proof}

This is in stark contrast to the case of intermediate models of $\ZFC$ of which there are only not just countably many, but there is a set of all the necessary generators in $V[c]$, and the same can be said about symmetric extensions given by the Cohen forcing itself, as Grigorieff's theorem shows. And while there is a proper class of symmetric extensions of $V$, it is still enumerated by the sets in $V[c]$, making it countable in the meta-theory.

Therefore between $V$ and $V[c]$ most models are models of $\ZF$, they are not symmetric extensions of $V$, and in fact they are not definable in $V[c]$. To make matters worse, inside each Bristol model, we can find a different real, and use that real to interpret the Bristol sequence. Just as well, we may also use one of the $\varrho_{\alpha+1}\restriction A^\alpha_\xi$ to interpret the Bristol model construction above a certain stage.

Which truly indicates that the Bristol models are intertwined through this multiverse of models. But to truly appreciate the Bristol model(s), and to understand a bit more its internal structure, we need to have a refined sense of choicelessness.
\section{Kinna--Wagner Principles}\label{sect:kinna-wagner}
The axiom of choice can be simply stated as ``every set can be injected into an ordinal'', or in other words, ``every set is equipotent with a set ordinals''. This, in conjunction with the following theorem, makes a nice way of understanding models of $\ZFC$.

\begin{theorem*}[Balcar--Vop\v{e}nka]
Suppose that $M$ and $N$ are two models of $\ZF$ with the same sets of ordinals. If $M\models\ZFC$, then $M=N$.\qed
\end{theorem*}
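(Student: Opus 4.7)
The plan is to show $M = N$ by establishing $V_\alpha^M = V_\alpha^N$ for every ordinal $\alpha$, by transfinite induction on $\alpha$. The whole idea rests on the fact that $\AC$ in $M$ lets us code every $M$-set by a set of ordinals in a uniform, absolute way, so the hypothesis ``$M$ and $N$ have the same sets of ordinals'' will be strong enough to force the two von Neumann hierarchies to agree.

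First I would set up the coding. Working in $M$, any set $x$ has a well-orderable transitive closure, so we can pick a bijection $e\colon\kappa\to\tcl(\{x\})$ and form $c(x)=\{\Gamma(\alpha,\beta)\mid e(\alpha)\in e(\beta)\}\subseteq\kappa\cdot\kappa$, where $\Gamma$ is the G\"odel pairing function; together with $\kappa$ this is coded as a single set of ordinals in $M$. The decoding procedure is just Mostowski collapse of the extensional well-founded relation read off from $c(x)$ together with the designation of a top element; this is absolute between any two transitive class models of $\ZF$. Consequently, if $c(x)\in N$ then the same decoding inside $N$ produces $x$, and in particular $x\in N$. Applied to every $x\in M$, this already gives $M\subseteq N$.

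Next I would do the induction for $V_\alpha^N\subseteq V_\alpha^M$. The base case and limit case are immediate. For the successor step, assume $V_\alpha^M=V_\alpha^N$ and pick any $y\in V_{\alpha+1}^N$, so $y\subseteq V_\alpha^N=V_\alpha^M$. Working in $M$, use $\AC$ to choose a bijection $f\colon\kappa\to V_\alpha^M$; by the paragraph above, $f\in M\subseteq N$. Then $f^{-1}[y]\subseteq\kappa$ is a set of ordinals in $N$, hence by hypothesis a set of ordinals in $M$, and therefore $y=f[f^{-1}[y]]\in M$. So $V_{\alpha+1}^N\subseteq V_{\alpha+1}^M$, completing the induction.

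The main obstacle — or really the one point that has to be checked carefully — is the absoluteness of the coding/decoding operation. Everything else is routine once the coding is in place; but one must verify that the ``decode a set of ordinals into a transitive set via Mostowski collapse'' operation is $\Delta_1$ over $\ZF$, so that $M$ and $N$ agree on its output whenever the input lies in both. Given that, the symmetric use of a well-ordering living in the choice-model $M$ at each successor stage is exactly what drives both inclusions, and the hypothesis that $M$ and $N$ have the same sets of ordinals is used precisely once per stage.
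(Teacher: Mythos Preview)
Your proposal is correct and follows essentially the same approach as the paper. The paper does not spell out a proof of the Balcar--Vop\v{e}nka theorem itself (it is stated with a \qed), but it proves the Generalised Balcar--Vop\v{e}nka--Monro theorem by exactly your two-step strategy: first $M\subseteq N$ by coding $\tcl(\{x\})$ with its $\in$-relation as a set of ordinals and recovering $x$ in $N$ via Mostowski collapse, and then $V_\eta^N\subseteq V_\eta^M$ by transfinite induction, using at each successor stage an $M$-side encoding of $V_\eta^M$ to pull subsets back from $N$ to $M$ through the shared sets of ordinals.
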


Commonly this is stated when $M$ and $N$ are both models of $\ZFC$, but that is in fact unnecessary. The proof relies on the fact that a relation on a set of ordinals can be coded as a set of ordinals. But can we extend the idea of this characterisation? Yes, yes we can.

\begin{definition}
We say that $A$ is an \textit{$\alpha$-set of ordinals}, or simply ``$\alpha$-set'', if there is an ordinal $\eta$ such that $A\subseteq\power^\alpha(\eta)$.
\end{definition}
\begin{definition}
Kinna--Wagner Principle for $\alpha$, denoted by $\KWP_\alpha$, is the statement that every set is equipotent with an $\alpha$-set. We write $\KWP$ to mean $\exists\alpha\, \KWP_\alpha$.
\end{definition}

For $\alpha=0$ this is simply $\AC$. The principle for $\alpha=1$ was defined by Kinna and Wagner, although formulated differently using selection functions, and was studied extensively. One of the immediate results is that $\KWP_1$ implies that every set can be linearly ordered, but as the work of Pincus shows in \cite{Pincus:KW}, $\KWP_1$ is stronger than the existence of linear orderings, and in fact independent of the Boolean Prime Ideal theorem (which also implies every set can be linearly ordered).

These general principles were defined by Monro in \cite{Monro:1973} for $\alpha<\omega$, and later extended by the author in \cite{Karagila:Iterations}. Monro extended the result of Balcar and Vop\v{e}nka, and this result can be further extended as well.

\begin{theorem}[The Generalised Balcar--Vop\v{e}nka--Monro Theorem]\label{thm:gen-bvm}\hfill\newline
Suppose that $M$ and $N$ are models of $\ZF$ with the same $\alpha$-sets. If $M\models\KWP_\alpha$, then $M=N$.
\end{theorem}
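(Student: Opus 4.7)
The plan is to adapt the proof of the classical Balcar--Vop\v{e}nka theorem, replacing ``set of ordinals'' with ``$\alpha$-set'' and using $\KWP_\alpha$ in place of $\AC$. The first observation is that since $\power^\beta(\eta)\subseteq\power^\alpha(\eta)$ whenever $\beta\leq\alpha$, every $\beta$-set is automatically an $\alpha$-set; so the hypothesis that $M$ and $N$ agree on $\alpha$-sets forces them to agree on $\beta$-sets for every $\beta\leq\alpha$. In particular they share the same sets of ordinals (hence the same ordinals) and, by induction on $\beta\leq\alpha$, the same $\power^\beta(\eta)$ for every ordinal $\eta$.

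The main combinatorial ingredient is a pairing lemma: for every $\alpha$, ordered pairs of elements of $\power^\alpha(\eta)$ can be encoded as elements of $\power^\alpha(\eta')$ for some $\eta'>\eta$. For $\alpha=0$ this is Cantor pairing on ordinals. For $\alpha=1$ one sets $\tup{A,B}=A\cup\{\eta+\beta:\beta\in B\}\subseteq 2\eta$. For $\alpha\geq 2$ one uses tagging: decorate each element of $A$ at its $\in$-bottom with the ordinal $\eta$ and each element of $B$ with $\eta+1$, relying on the fact that these ordinals do not already appear inside the members of $A$ or $B$. Iterating, one encodes any binary relation on $\power^\alpha(\eta)$, and any finite tuple of $\alpha$-sets, as a single $\alpha$-set.

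The main argument is a transfinite induction on rank $\beta$ showing $V_\beta^M=V_\beta^N$. The base case and limit stages are immediate. For the successor step, assume $V_\gamma^M=V_\gamma^N=:V_\gamma$ and wish to prove $\power(V_\gamma)^M=\power(V_\gamma)^N$. By $\KWP_\alpha$ in $M$ there is an injection $f\colon V_\gamma\hookrightarrow\power^\alpha(\eta)$ lying in $M$; let $I=f[V_\gamma]$ and let $R\subseteq I\times I$ be the pullback of $\in$ along $f$. Using the pairing lemma, $R$ is encoded as an $\alpha$-set $R'$, and together with $I$ it is packaged by tagging into a single $\alpha$-set $C\in M$. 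By hypothesis $C\in N$, and in $N$ one decodes $(I,R)$ from $C$ and applies the Mostowski collapse, which is absolute. The collapse returns the transitive set $V_\gamma$ (equal in $M$ and $N$ by the inductive hypothesis) together with the bijection $f^{-1}\colon I\to V_\gamma$; hence $f\in N$. Consequently, for every $X\in\power(V_\gamma)^M$ the image $f[X]\subseteq\power^\alpha(\eta)$ is an $\alpha$-set in $M$, hence in $N$, so $X=f^{-1}[f[X]]\in N$. The reverse inclusion is symmetric, using $f\in N$ and the $\alpha$-set hypothesis.

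The principal obstacle is the pairing lemma itself: the naive Kuratowski encoding raises rank by two, turning pairs of $\alpha$-elements into $(\alpha+2)$-elements, so one has to do honest combinatorics to push pairs back down to level $\alpha$. Beyond that, the proof is bookkeeping in the style of Balcar--Vop\v{e}nka, with the sole proviso of verifying that the Mostowski collapse is absolute between $M$ and $N$---which is standard, as the collapse is defined by an $\in$-recursion depending only on the given relation.
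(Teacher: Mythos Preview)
Your proposal is correct and follows essentially the same approach as the paper: both rely on a pairing lemma showing that relations on $\alpha$-sets can be coded as $\alpha$-sets, both transfer the coding of (an initial segment of) the von Neumann hierarchy across via the shared $\alpha$-sets, and both recover the bijection via Mostowski collapse. The only structural difference is that the paper first handles $M\subseteq N$ separately by encoding $\tcl(\{x\})$ for each $x\in M$, and then runs the $V_\eta$-induction only for the reverse inclusion, whereas you fold both inclusions into a single induction; your organisation is slightly cleaner, but the content is the same.
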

\begin{proof}
  Note that for every $\alpha$, we can code relations on $\alpha$-sets by using $\alpha$-sets in a robust and definable way by extending G\"odel's pairing function. So we can simply repeat the proof of Balcar--Vop\v{e}nka. If $x\in M$, we can encode $\tcl(\{x\})$ and its membership relation as an $\alpha$-set, $X$. So $X\in N$, and by decoding the membership relation and applying Mostowski's collapse lemma, $x\in N$. Therefore $M\subseteq N$.

  In the other direction, suppose that $V_\eta^M=V_\eta^N$, then we can encode it as an $\alpha$-set in $M\cap N$. Now given any $x\subseteq V_\eta$ in $N$, i.e.\ $x\in V_{\eta+1}^N$, by looking at the $\alpha$-set encoding $V_\eta$, we can identify the subset corresponding to $x$, in $N$. But as $M$ and $N$ share the same $\alpha$-sets, this implies that this subset is in $M$ as well, and we can therefore find $x\in M$. Now by transfinite induction, $V_\eta^M=V_\eta^N$ for all $\eta$, and equality ensues.
\end{proof}

Monro proved in \cite{Monro:1973} that $\KWP_{n+1}\nto\KWP_n$ for all $n$. This result was extended to show that $\KWP_\omega\nto\KWP_n$ for all $n$ by the author in \cite{Karagila:Iterations}, and was later extended as well by Shani in \cite{Shani:2018} to show that for all $\alpha<\omega_1$, $\KWP_{\alpha+1}\nto\KWP_\alpha$.

\begin{definition}
\textit{Small Violation of Choice} holds if there exists a set $A$ such that for any set $x$ there is an ordinal $\eta$ and a surjection $f\colon\eta\times A\to x$. We write $\SVC(A)$ to specify this set $A$, or $\SVC$ to mean $\exists A\,\SVC(A)$.
\end{definition}
This axiom is due to Blass in \cite{Blass:1979}, and it turns out to play an important role in the study of symmetric extensions. Blass proved in \cite{Blass:1979} that $\SVC$ is equivalent to the statement ``The axiom of choice is forceable [by a set forcing]'', and that $\SVC$ holds in every symmetric extension. Usuba showed in \cite{Usuba:2019} that the latter implication can be reversed. Namely, $V\models\SVC$ if and only if $V$ is a symmetric extension of a model of $\ZFC$.

\begin{proposition}
Suppose that $\SVC(A)$ holds, if $A$ is equipotent with an $\alpha$-set, then $\KWP_{\alpha+1}$ holds.
\end{proposition}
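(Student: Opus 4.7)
The plan is to produce, for an arbitrary set $x$, an injection from $x$ into an $(\alpha+1)$-set. Using the bijection provided by the hypothesis, I may replace $A$ by an $\alpha$-set $B \subseteq \power^\alpha(\zeta)$ throughout, since $\SVC$ transfers under equipotence. The case $\alpha = 0$ is degenerate: $B$ is then well-orderable, and $\SVC(B)$ together with well-orderability of $B$ immediately yields $\AC$ (any set surjected onto by a well-orderable set is itself well-orderable, via the minimum-of-preimage map), hence $\KWP_0$ and \emph{a fortiori} $\KWP_1$. So I focus on $\alpha \geq 1$ for the main argument.

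Apply $\SVC(B)$ to $x$ to obtain an ordinal $\eta$ and a surjection $f \colon \eta \times B \twoheadrightarrow x$; then $y \mapsto f^{-1}(y)$ injects $x$ into $\power(\eta \times B)$. The whole task reduces to embedding $\eta \times B$ into a single $\alpha$-set, at which point each $f^{-1}(y)$ is automatically an $(\alpha+1)$-set. The Kuratowski encoding of ordered pairs would cost two extra iterates of the powerset and only yield $\KWP_{\alpha+3}$, so I use a tagged-union trick. Put $\delta = \eta + \zeta$; for each $b \in B$, let $\tilde{b}$ be obtained by replacing every ordinal $\xi$ appearing in the transitive closure of $\{b\}$ by $\eta + \xi$, so that $\tilde{b}$ has the same structural shape as $b$, lies in $\power^\alpha(\delta)$, and contains only ordinals from $[\eta,\delta)$ in its transitive closure. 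Define
\[
  \phi(\beta, b) = \tilde{b} \cup \{\beta\}.
\]

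Since every ordinal below $\delta$ belongs to $\power^k(\delta)$ for all $k$ (by a straightforward induction using that $\gamma < \delta$ implies $\gamma \subseteq \delta$), both $\beta$ and every element of $\tilde{b}$ lie in $\power^{\alpha-1}(\delta)$, whence $\phi(\beta, b) \in \power^\alpha(\delta)$. Injectivity follows from the ordinal-range separation: the transitive closure of any element of $\tilde{b}$ meets only ordinals in $[\eta, \delta)$, while $\beta < \eta$, so $\beta$ is the unique element of $\phi(\beta, b)$ whose transitive closure sits entirely below $\eta$; removing it recovers $\tilde{b}$ and, by untranslating, $b$ itself. Consequently $\phi[f^{-1}(y)] \subseteq \power^\alpha(\delta)$ is an element of $\power^{\alpha+1}(\delta)$, and $y \mapsto \phi[f^{-1}(y)]$ is an injection of $x$ into $\power^{\alpha+1}(\delta)$ because $\phi$ is injective and the fibres $f^{-1}(y)$ are nonempty and pairwise disjoint. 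The image is an $(\alpha+1)$-set equipotent with $x$, establishing $\KWP_{\alpha+1}$. The only delicate point in the plan is arranging for the pair encoding to preserve the level count exactly; the ordinal-translation tagging is what accomplishes this uniformly across successor and limit $\alpha$.
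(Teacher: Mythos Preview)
Your overall strategy matches the paper's exactly: replace $A$ by an $\alpha$-set $B\subseteq\power^\alpha(\zeta)$, use the $\SVC$ surjection $f\colon\eta\times B\to x$ to inject $x$ into $\power(\eta\times B)$ via fibres, and argue that $\eta\times B$ is codable as an $\alpha$-set so that each fibre becomes an $(\alpha+1)$-set. The paper simply invokes its earlier remark (in the proof of \autoref{thm:gen-bvm}) that pairs of $\alpha$-sets can be coded as $\alpha$-sets ``by extending G\"odel's pairing function''; you attempt to make this coding explicit, and for $\alpha\leq 1$ your argument is fine. For $\alpha\geq 2$, however, the encoding $\phi(\beta,b)=\tilde b\cup\{\beta\}$ is not injective.

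The trouble is with \emph{pure} elements of $\power^{\alpha-1}(\zeta)$, namely those already lying in $\power^{\alpha-1}(\varnothing)$, such as $\varnothing,\{\varnothing\},\{\{\varnothing\}\},\ldots$ Your translation $b\mapsto\tilde b$ shifts only bottom-level ordinals and therefore fixes every pure set, so $\tilde b$ can still contain elements that are small ordinals. Concretely, for $\alpha=2$ take $b_1=\varnothing$ and $b_2=\{\varnothing\}$: then $\tilde b_1=\varnothing$, $\tilde b_2=\{\varnothing\}$, and
\[
\phi(0,b_1)=\varnothing\cup\{0\}=\{\varnothing\}=\{\varnothing\}\cup\{0\}=\phi(0,b_2).
\]
For $\alpha=3$ a single $b\in B$ with $\{\varnothing,\{\varnothing\}\}\subseteq b$ gives $\{0,1\}\subseteq\tilde b$, whence $\phi(0,b)=\tilde b=\phi(1,b)$. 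In each case your recovery criterion---that $\beta$ is the unique element of $\phi(\beta,b)$ whose transitive closure lies below $\eta$---is violated precisely by these pure elements, and the assertion that $\tilde b$ ``contains only ordinals from $[\eta,\delta)$ in its transitive closure'' is false for them. The coding can be repaired, for instance by inducting via the bijection $\power(X)\times\power(X)\cong\power(X\sqcup X)$, which pushes the tagging one level down at each step rather than adjoining it at the top; but as written the injectivity claim fails.
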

\begin{proof}
We may assume that $A$ is an $\alpha$-set itself, and so if $f\colon\eta\times A\to x$ is a surjection, by coding we may replace $\eta\times A$ by an $\alpha$-set as well, say $A_\alpha$. Now the function mapping $y\in x$ to $y_f=\{a\in A_\alpha\mid f(a)=y\}$ is injective, and each $y_f$ is an $\alpha$-set. Therefore $x$ is equipotent to the $(\alpha+1)$-set $\{y_f\mid y\in x\}$.
\end{proof}
In the other direction, however, the implication fails. It is consistent that $\KWP$ holds, but $\SVC$ fails. For example, in \cite{Monro:1975} a class-symmetric extension is constructed which easily fails $\SVC$, but the proof that $\KWP_1$ holds in the Cohen model can be adapted for it as well.\footnote{In most class-symmetric extensions a similar thing should happen, if each component of the product preserves $\KWP_\alpha$, then the product as a whole will satisfy $\KWP$. We invite the interested reader to prove this.}

Before we return to study the Bristol model, let us study Kinna--Wagner Principles a bit more in depth.

\begin{theorem}\label{thm:kw-generic}
Suppose that $V\models\KWP_\alpha$, if $V[G]$ is a generic extension, then $V[G]\models\KWP_{\alpha^*}$, where $\alpha^*=\sup\{\beta+2\mid\beta<\alpha\}$.\footnote{In other words, $\alpha^*$ is the successor of $\alpha$ when $\alpha$ itself is a successor, or $\alpha$ itself otherwise.}
\end{theorem}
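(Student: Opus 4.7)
My plan is to lift $\KWP_\alpha$ from $V$ to $V[G]$ by working with $\PP$-names. Every $x\in V[G]$ has a name $\dot x\in V$; writing $D=\{\dot z\mid\exists p\,\tup{p,\dot z}\in\dot x\}\in V$, the map $\psi\colon x\to\power(D)$ defined in $V[G]$ by $\psi(y)=\{\dot z\in D\mid\dot z^G=y\}$ is an injection, because distinct $y$'s give disjoint nonempty fibres. I will then control the complexity of $D$ using $\KWP_\alpha$ in $V$ and transport that control along $\psi$.

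The argument splits into three cases matching the definition of $\alpha^*$. If $\alpha=0$, so $\alpha^*=0$, the hypothesis is $\AC$ and the conclusion is the classical preservation of $\AC$ under forcing, via a $V$-well-ordering of the class of $\PP$-names. If $\alpha=\gamma+1$ is a successor, so $\alpha^*=\alpha+1$, fix in $V$ a bijection $\phi\colon D\to B$ with $B\subseteq\power^\alpha(\eta)$; then the push-forward $\psi'(y):=\phi[\psi(y)]$ is an injection $x\to\power(B)$ in $V[G]$, and each $\psi'(y)\subseteq B\subseteq\power^\alpha(\eta)$ is an element of $\power^{\alpha+1}(\eta)$, so $\psi'[x]$ is an $(\alpha+1)$-set equipotent with $x$.

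The delicate case is $\alpha$ a limit, where $\alpha^*=\alpha$ and a single application of the successor argument would only produce an $(\alpha+1)$-set, which by Shani's strictness result is in general strictly weaker than what we want. Here I stratify. Using the identity $\power^\alpha(\eta)=\bigcup_{\beta<\alpha}\power^\beta(\eta)$, assign to each $b\in B$ its level $\mathrm{lev}(b)=\min\{\beta\mid b\in\power^\beta(\eta)\}<\alpha$, and set $B_\beta=\{b\in B\mid\mathrm{lev}(b)=\beta\}\subseteq\power^\beta(\eta)$. In $V[G]$ define $\beta(y)=\min\{\beta<\alpha\mid\exists\dot z\in\phi^{-1}[B_\beta],\ \dot z^G=y\}$, which partitions $x=\bigsqcup_{\beta<\alpha}x_\beta$. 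The successor argument applied level by level produces injections $\psi_\beta\colon x_\beta\hookrightarrow\power^{\beta+1}(\eta)$. Combine them by Kuratowski pairing, setting $\Psi(y)=(\beta(y),\psi_{\beta(y)}(y))$; a routine rank calculation, which uses that any ordinal $\delta<\alpha\leq\eta'$ lies in $\power^k(\eta')$ for every $k\geq 0$, gives $\Psi(y)\in\power^{\beta(y)+3}(\eta')$ for any $\eta'\geq\max(\alpha,\eta)$. Because $\alpha$ is a limit, $\beta(y)+3<\alpha$, so $\Psi[x]\subseteq\power^\alpha(\eta')$ is an $\alpha$-set, and $\Psi$ is injective since its first coordinate recovers $\beta(y)$ and the second then recovers $y$ via injectivity of $\psi_{\beta(y)}$.

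The main obstacle is exactly this limit case: the naive argument loses a layer of power set, which is fatal when $\alpha$ is a limit because $\power^{\alpha+1}(\eta)$ does not collapse into any $\power^\alpha(\eta')$. The resolution exploits the fact that limit stages of the hierarchy are honest unions, letting one dissect $x$ into slices each handled at a level safely bounded below $\alpha$ and reassemble by pairing without ever crossing the threshold.
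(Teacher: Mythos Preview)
Your proof is correct and takes essentially the same approach as the paper: extract the set of names appearing in $\dot x$, code it as an $\alpha$-set in $V$, and map each $y\in x$ to the set of codes of names for it; in the limit case both you and the paper stratify by the minimal level $\beta$ at which a code for $y$ appears. One minor redundancy worth noting: your Kuratowski pairing with $\beta(y)$ is unnecessary, since the sets $B_\beta$ are pairwise disjoint by construction (you defined them by \emph{exact} level), so the nonempty fibres $\psi_{\beta(y)}(y)\subseteq B_{\beta(y)}$ are already pairwise distinct across all of $x$ --- the map $y\mapsto\psi_{\beta(y)}(y)$ alone is an injection into $\bigcup_{\beta<\alpha}\power^{\beta+1}(\eta)\subseteq\power^\alpha(\eta)$, which is precisely how the paper's accompanying lemma handles the limit case.
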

\begin{proof}
  For every $x\in V[G]$ there is a name $\dot x$ in $V$, so $\dot x\restriction G=\{\tup{p,\dot y}\in\dot x\mid p\in G\}$ in $V[G]$, and the interpretation map is a surjection onto $x$, which we can extend to a surjection from $\dot x$ itself. Therefore every set in $V[G]$ is the surjective image of a set in $V$.

  If $\alpha<\alpha^*$ the above completes the proof, as we may assume that $\dot x$ was an $\alpha$-set, and conclude that $x$ is an $(\alpha+1)$-set. For $\alpha=0$ or a limit ordinal, we use \autoref{lemma:inverse-alpha-sets}.
\end{proof}
\begin{lemma}\label{lemma:inverse-alpha-sets}
Let $A$ be an $\alpha$-set for $\alpha=0$ or a limit ordinal. If $f\colon A\to x$ is a surjection, then there exists an $\alpha$-set $B$ which is equipotent with $x$.
\end{lemma}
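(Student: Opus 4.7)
The plan is to exploit, in each case, the ordinal stratification available inside $\power^\alpha(\eta)$ to pick canonical ``representatives'' for the fibers of $f$ without invoking choice.

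First I would dispose of the case $\alpha=0$. Here $A\subseteq\eta$ is a set of ordinals, so each fiber $f^{-1}(y)$ is a nonempty set of ordinals and thus has a least element. The map $y\mapsto\min f^{-1}(y)$ is injective because the fibers are pairwise disjoint, so $B=\{\min f^{-1}(y)\mid y\in x\}\subseteq\eta$ is a $0$-set equipotent with $x$.

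For the main case, let $\alpha$ be a limit and write $\power^\alpha(\eta)=\bigcup_{\beta<\alpha}\power^\beta(\eta)$. For each $y\in x$ the fiber $f^{-1}(y)$ is a nonempty subset of this union, so the set $\{\beta<\alpha\mid f^{-1}(y)\cap\power^\beta(\eta)\neq\varnothing\}$ is a nonempty set of ordinals; let $\beta(y)$ be its minimum, and set
\[ b(y)=f^{-1}(y)\cap\power^{\beta(y)}(\eta). \]
Then $b(y)$ is a nonempty subset of $\power^{\beta(y)}(\eta)$, i.e.\ $b(y)\in\power^{\beta(y)+1}(\eta)$, and here is exactly where the limit hypothesis on $\alpha$ is used: since $\beta(y)<\alpha$ and $\alpha$ is limit, $\beta(y)+1<\alpha$ as well, so $b(y)\in\power^\alpha(\eta)$.

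It remains to check that $y\mapsto b(y)$ is injective: if $y\neq y'$ then any $a\in b(y)\subseteq f^{-1}(y)$ lies outside $f^{-1}(y')$ and hence outside $b(y')\subseteq f^{-1}(y')$, so $b(y)\neq b(y')$. Putting $B=\{b(y)\mid y\in x\}$ gives an $\alpha$-set of ordinals (witnessed by the same $\eta$) in bijection with $x$. The only delicate point is the rank jump from $\power^{\beta(y)}$ to $\power^{\beta(y)+1}$; without $\alpha=0$ or $\alpha$ limit this jump could push $b(y)$ outside $\power^\alpha(\eta)$, which is precisely why a naive proof would only yield an $(\alpha+1)$-set and why the hypothesis on $\alpha$ is essential.
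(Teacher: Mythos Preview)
Your proof is correct and follows essentially the same approach as the paper's: in the $\alpha=0$ case both take the least ordinal in each fiber, and in the limit case both assign to $y$ the least level $\beta$ at which the fiber $f^{-1}(y)$ is inhabited and use the slice of the fiber at that level as the representative, noting that this slice lands in $\power^{\beta+1}(\eta)\subseteq\power^\alpha(\eta)$ since $\alpha$ is a limit. Your write-up is in fact slightly more explicit than the paper's about why injectivity holds (nonempty subsets of disjoint fibers) and about where exactly the limit hypothesis is used.
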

\begin{proof}
  For $\alpha=0$, $A$ is a set of ordinals, so we may simple choose the least ordinal from the pre-image of each $y\in x$. Suppose that $\alpha$ is a limit ordinal, and define for $\beta<\alpha$, $A_\beta=\{a\in A\mid a\text{ is a }\beta\text{-set}\}$. For every $y\in x$, let $\beta_y$ be the least $\beta$ such that for some $a\in A_\beta$, $f(a)=y$.

  Now define $B_y=\{a\in A_{\beta_y}\mid f(a)=y\}$, this is a $(\beta_y+1)$-set, and $y\neq y'$ implies $B_y\neq B_{y'}$. Therefore $B=\{B_y\mid y\in x\}$ is an $\alpha$-set equipotent with $x$.
\end{proof}
On the other hand, ground models need not satisfy the same $\KWP_\alpha$ as their generic extension: as we have seen in the construction of the Bristol model, $L[c]$ has a proper class of ground models, $L(V_\alpha^M)$, and as we will see in the next section, the various $\KWP_\alpha$ fails as we go up the hierarchy.

The next obvious question is whether or not \autoref{thm:kw-generic} can be improved. Unfortunately, it cannot. The Cohen model famously satisfies $\KWP_1$,\footnote{An implicit proof can be found as Lemma~5.25 in \cite{Jech:AC1973}.} but as Monro demonstrated in \cite{Monro:1983}, there is a generic extension of the Cohen model in which there exists an amorphous set, which cannot be linearly ordered, in particular, $\KWP_1$ must fail in that generic extension. Nevertheless, by the theorem above, $\KWP_2$ must hold. This leads us to the following conjecture.\footnote{These are not directly related to the Bristol model, so we include them here instead of \autoref{sect:questions}}
\begin{conjecture}[The $\alpha^*$ Conjecture]
Suppose that $\KWP_{\alpha^*}$ holds in every generic extension of $V$, then $\KWP_\alpha$ must hold in a ground of $V$.\footnote{A previous version of this manuscript suggested that the ground can be taken as $V$ itself. However Monro's generic extension of the Cohen model satisfies $\KWP_2+\lnot\KWP_1$, and any generic extension of that model is also a generic extension of the Cohen model, so will also satisfy $\KWP_2$.}
\end{conjecture}

It is also easy to see that if $M\models\lnot\KWP$, then also any generic extension of $M$ must satisfy this. Which points out to a particularly poignant feature of a generic multiverse, and indeed a \textit{symmetric multiverse},\footnote{Allowing symmetric extensions and grounds. See \cite{Usuba:2019} for more information.} $\KWP$ hold or fails uniformly throughout the entire multiverse.

\begin{conjecture}[The Kinna--Wagner Conjecture]
Suppose that $V\models\KWP$ and $G$ is a $V$-generic filter. If $M$ is an intermediate model between $V$ and $V[G]$ and $M\models\KWP$, then $M=V(x)$ for some set $x$.
\end{conjecture}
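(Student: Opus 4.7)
The plan is to combine Usuba's characterisation—that $M=V(x)$ for some set $x$ if and only if $M$ is a symmetric extension of $V$—with the Generalised Balcar--Vop\v{e}nka--Monro Theorem (\autoref{thm:gen-bvm}). Since $M\models\KWP_\beta$ for some $\beta$, \autoref{thm:gen-bvm} reduces the problem to exhibiting a single $x\in M$ such that $V(x)$ contains precisely the $\beta$-sets of $M$: the BVM theorem will then force $V(x)=M$. So the whole task becomes: encode the class of $\beta$-sets of $M$ by one set $x\in M$ in a way that $V$ can decode with only ordinal parameters.

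First I would exploit the ambient structure of $V[G]$. By \autoref{thm:kw-generic} we have $V[G]\models\KWP_{\alpha^*}$, so every $\beta$-set of $M$ has a $V$-name that is itself essentially coded by an $\alpha^*$-set in $V$; in other words, the whole collection of $\beta$-sets of $M$ is describable within $V[G]$ by ``reading off'' a suitable subcollection of $V$-names along $G$. The natural candidate for $x$ is then the hereditarily $M$-pertinent part of $G$: something like the set of pairs $\tup{p,\dot y}$ with $p\in G$ and $\dot y$ a $V$-name whose interpretation is hereditarily a $\beta$-set of $M$. If this class can be cut off at some rank in $V$, then $x$ itself is a set, and $V(x)$ immediately recovers all $\beta$-sets of $M$. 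Dually, one could try to recast $M$ directly as a symmetric extension: find a symmetric system $\tup{\PP,\sG,\sF}$ in $V$ for which $M=\HS^G$; here $\sG$ and $\sF$ would be manufactured from the automorphisms of $\PP$ that happen to fix $M$ as a class in $V[G]$.

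The main obstacle is precisely the bounding step. A priori $M$ has $\beta$-sets of arbitrarily large rank, and the Bristol model is the paradigmatic example that shows no such bounding is automatic: there the class of intermediate $\beta$-sets is genuinely unbounded and no single $x$ generates $M$ over $V$. The conjecture predicts that $\KWP_\beta$ in $M$ is the exact additional hypothesis ruling this out, but making this quantitative seems to need a genuinely new idea—perhaps a reflection argument inside $V[G]$ proving that the operator ``$y$ is a $\beta$-set of $M$'' is definable from bounded parameters once $\KWP_\beta$ is assumed in $M$, or a way to uniformise the symmetry witnesses for all the $\beta$-sets at once. I expect that resolving the conjecture hinges on finding a uniform witness of this kind, and that the route via \autoref{thm:gen-bvm} is more promising than attempting to construct the symmetric system $\tup{\PP,\sG,\sF}$ directly, since the BVM approach only asks for a single generator while the symmetric-system approach asks us to manufacture the full group-and-filter data from $M$.
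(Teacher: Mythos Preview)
The statement you were asked to prove is a \emph{conjecture} in the paper, not a theorem: the paper offers no proof, only the surrounding motivation (the Bristol model as the archetype of an intermediate model that fails $\KWP$ and is not $V(x)$ for any $x$). So there is no ``paper's proof'' to compare against.

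Your write-up is honest about this: it is not a proof but a sketch of a strategy together with a diagnosis of where it breaks. The diagnosis is accurate. The reduction via \autoref{thm:gen-bvm} to ``find one $x\in M$ so that $V(x)$ has the same $\beta$-sets as $M$'' is correct and is exactly the shape one would expect, but the step where you try to cut off the relevant class of names at some rank is the whole problem, and nothing in your outline supplies a mechanism for doing it. The candidate $x$ you propose---the collection of pairs $\tup{p,\dot y}$ with $p\in G$ and $\dot y$ a name whose interpretation is a hereditary $\beta$-set of $M$---is a proper class, and $\KWP_\beta$ in $M$ gives you an injection of each set into a $\beta$-set but no uniform bound on the ordinal $\eta$ needed, so no single rank captures everything. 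Your alternative route, manufacturing a symmetric system $\tup{\PP,\sG,\sF}$ from the automorphisms of $\PP$ fixing $M$, runs into the same issue in a different guise: you would need the filter $\sF$ to be a \emph{set}, and there is no reason the stabilisers you collect should be bounded in this way. In short, you have correctly located the missing idea rather than supplied it; the conjecture remains open.
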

The Bristol model was an exercise in finding an intermediate model which is not constructible from a set. And we conjecture that having any such model, intermediate to a generic extension, will fail $\KWP$, and vice versa: any intermediate model of $\KWP$ is constructible from a set over the ground model.
\begin{remark}
  The Kinna--Wagner Conjecture was proved by the author and Jonathan Schilhan in \cite{KaragilaSchilhan:2025}, along with a more thorough study of Kinna--Wagner Principles in the generic multiverse, including improvements to several of the theorems presented in this section.
\end{remark}

It may also be the case that $\KWP$ implies ground model definability, which is a notoriously difficult problem in $\ZF$. Usuba proved \cite[Corollary~6]{Usuba:LSP} that under a certain condition ground models are definable in $\ZF$, but currently the only known models to satisfy these conditions are models satisfying $\SVC$.\footnote{The existence of a proper class of L\"owenheim--Skolem cardinals, which follows from $\SVC$.} Nevertheless, as $\alpha$-sets can be used to characterise a model of $\ZF$ in the presence of $\KWP_\alpha$, it stands to reason that it may play a role in ground model definability as well.

We can also define $\SVC_\alpha$ to mean that we replace the ordinal, $\eta$, by an $\alpha$-set. And in that case we can easily see that $\SVC_\alpha$ is equivalent to ``$\KWP_{\alpha^*}$ is forceable''. And one is now left wondering if $\SVC_\alpha$ is equivalent to being a symmetric extension of a model satisfying $\KWP_\alpha$.

One can also take a different approach and define $\SVC_M$, for a class $M$, where we may replace the ordinal $\eta$ by a set from $M$, so $\SVC=\SVC_\Ord$ and $\SVC_\alpha$ is a shorthand for $\SVC_{\power^\alpha(\Ord)}$. For this to be truly useful, we need to modify $\KWP_\alpha$ so that $0$-sets are subsets of $M$. These ideas may play a role in the ultimate answers regarding ground model definability in $\ZF$, and we hope this discussion will help to inspire some of the readers to think about that.

\begin{question}
  Is ground model definability equivalent to $\KWP$?
\end{question}
Note that this question is meaningful since as we observed, $\KWP$ is absolute through the generic, and indeed the symmetric, multiverse.
\section{Choice principles in Bristol}\label{sect:choice}
We want to investigate the failure of the axiom of choice in the Bristol model, $M$, and provide alternative proofs to the key property of the Bristol model, namely $\forall x(V\neq L(x))$.

\begin{proposition}[\cite{Karagila:Bristol}, Theorem~5.5]
Let $M$ denote the Bristol model, and $M_\alpha$ be the $\alpha$th model in the construction. Suppose that $A\in M$ is an $\alpha$-set, then $A\in M_{\alpha+1}$.
\end{proposition}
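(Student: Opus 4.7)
The plan is to prove this by induction on $\alpha$, showing that any hereditarily symmetric name for an $\alpha$-set in $\IS_\beta$, for $\beta > \alpha + 1$, can be ``projected'' to one in $\IS_{\alpha+1}$. The base case is the most delicate and carries the essential ideas; the inductive step applies the same pattern one level up with the inductive hypothesis supplying low-level names for the elements.

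For the base case $\alpha = 0$, let $A \subseteq \eta$ with $A \in M_\beta$ and $\dot A \in \IS_\beta$. The elements of $A$ are ordinals, whose check-names $\check\xi$ are absolute under every $\gaut{\vec\pi} \in \cG_\beta$. I would fix an excellent support $\vec H = \tup{H_0, H_1, \ldots}$ witnessing $\dot A \in \IS_\beta$ (so in particular $H_0 \in \sF_0$) and define a candidate projection $\dot A^* \in \IS_1$ consisting of pairs $\tup{p_0, \check\xi}$ where $p_0 \in \QQ_0$ and $p_0$ ``forces, modulo the higher-stage symmetric machinery, that $\check\xi \in \dot A$''. The nontrivial task is to verify that whether $p_0$ forces $\check\xi \in \dot A$ does not depend on the choice of extension of $p_0$ into $\PP_\beta$. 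This is where the symmetric structure pays: upwards homogeneity together with the fact that $H_\gamma$ fixes $\dot A$ for $\gamma \geq 1$ allows us to move one extension to any other via an automorphism in $\vec H$, forcing them to agree on $\check\xi \in \dot A$. This parallels the reduction in \autoref{prop:reals-support}, generalized across the iteration; a symmetry check using $H_0 \in \sF_0$ then confirms $\dot A^* \in \IS_1$.

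For the inductive step $\alpha \to \alpha + 1$, let $A$ be an $(\alpha+1)$-set, so every $y \in A$ is an $\alpha$-set. By the inductive hypothesis, each such $y$ lies in $M_{\alpha+1}$ and hence has a fixed $\IS_{\alpha+1}$-name that plays the role of ``check-name'' from the base case. Substituting these check-like names for the ordinal check-names in the base-case argument, the same projection reduces $\dot A \in \IS_\beta$ (for $\beta > \alpha + 1$) to an $\IS_{\alpha+2}$-name. The limit case $\alpha$ is essentially identical: elements of $A$ are $\beta$-sets for $\beta < \alpha$, hence in $M_\alpha$ by IH, and the projection places $A$ in $M_{\alpha+1}$.

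The main technical obstacle is the well-definedness step: showing that, for each potential element $y$ of $A$, the question ``$y \in A$'' is decided by coordinates ${\leq}\alpha + 1$ of the iteration, with higher coordinates trivialized by automorphisms in the excellent support. In the base case this uses the absoluteness of ordinal check-names combined with the upwards homogeneity machinery exploited in \autoref{correction:4.4}. In the inductive steps, one additionally needs the IH to produce a uniform stage-$(\alpha+1)$ representative for each element of $A$, and then invokes upwards homogeneity at stages $\gamma > \alpha + 1$ to collapse the dependence there. Care is needed to ensure the excellent supports remain compatible when the per-element projections are glued into a single projection for $\dot A$; this is enabled by the fact that excellent supports allow finitary specification at each coordinate and are stable under the automorphism actions used.
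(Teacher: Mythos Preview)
The paper does not actually prove this proposition; it only remarks that ``the idea extends the homogeneity argument used in \autoref{prop:reals-support}'' and defers to \cite{Karagila:Bristol}. Your sketch is consistent with that hint: induction on $\alpha$, with the base case projecting a name for a set of ordinals down to $\IS_1$ via homogeneity (exactly the pattern of \autoref{prop:reals-support} lifted across the iteration), and the inductive step replacing ordinal check-names by $\IS_{\alpha+1}$-names for the elements supplied by the induction hypothesis. That is the correct architecture.

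One point deserves tightening. You write that upwards homogeneity ``allows us to move one extension to any other via an automorphism in $\vec H$''. Upwards homogeneity at stage $\gamma$ produces an automorphism from $\cG_\gamma$, i.e.\ from \emph{earlier} coordinates, not from $H_\gamma$; and there is no a~priori reason such an automorphism lies in the support. The actual mechanism, visible in the proof of \autoref{prop:m2-bpi} and in the corrected proof in \autoref{correction:4.4}, is that one first \emph{prepares} the condition so that the part needing to be moved lies outside the domain controlled by the support, and only then invokes homogeneity to obtain an automorphism that both does the moving and belongs to $\vec H$. You gesture at this in your final paragraph (``care is needed \ldots''), but it is not a bookkeeping afterthought: it is the place where the argument either works or fails, and the preparation step is what makes it work. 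With that adjustment, your proposal matches what the paper intends.
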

We will not prove this proposition here, but the idea extends the homogeneity argument used in \autoref{prop:reals-support}. Indeed, $L(V_{\alpha+1}^M)$ contains all the $\alpha$-sets of $M$.
\begin{corollary}
  Suppose that $W$ is a model of $\ZFC+\GCH+\square^*_\lambda$ for every singular cardinal $\lambda$, and let $M$ be a Bristol extension of $V$. Then $W$ is definable in $M$.
\end{corollary}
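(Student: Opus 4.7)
The plan is to invoke the Generalised Balcar--Vop\v{e}nka--Monro Theorem (\autoref{thm:gen-bvm}). Since $W\models\AC$, $W$ satisfies $\KWP_0$, so any transitive $\ZF$ model containing the ordinals of $W$ and having the same sets of ordinals as $W$ must coincide with $W$. It therefore suffices to exhibit the class of sets of ordinals of $W$ as a definable class of $M$, possibly with parameters (e.g.\ the Bristol sequence).

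By the preceding proposition every set of ordinals of $M$ already lies in $M_1$, so I only need to isolate, inside $M_1$, which sets of ordinals belong to $W$. For this I would use the amorphous set $R=\{R_\alpha\mid\alpha<\omega_1\}$ of \autoref{prop:R-amorphous}, applied rank-by-rank. For each ordinal $\eta$ set $R_\alpha^{(\eta)}=R_\alpha\cap V_\eta$ and $R^{(\eta)}=\{R_\alpha^{(\eta)}\mid\alpha<\omega_1\}$, both of which are sets in $M_1$. I claim $\bigcap R^{(\eta)}$ is exactly the collection of $W$-sets of ordinals of rank ${<}\eta$. The forward inclusion is immediate: if $X\in W$ then the trivial name $\check X$ is a decent almost $A$-name for every $A\subseteq\omega$, so $X\in R_\alpha^{(\eta)}$ for every $\alpha$. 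For the converse, if $X\in R_\alpha^{(\eta)}\cap R_\beta^{(\eta)}$ with $\alpha\neq\beta$, then $X\in W[c\cap A_\alpha]\cap W[c\cap A_\beta]$; since $A_\alpha\cap A_\beta$ is bounded, the subreals $c\cap A_\alpha$ and $c\cap A_\beta$ are mutually Cohen-generic over $W$, and the standard intersection theorem for mutual generics gives $W[c\cap A_\alpha]\cap W[c\cap A_\beta]=W$.

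Letting $\eta$ range over all ordinals assembles the full class of $W$'s sets of ordinals into a class definable in $M$, at which point \autoref{thm:gen-bvm} reconstructs $W$ itself by Mostowski-collapsing the membership relations encoded in those sets of ordinals.

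The main obstacle, and the point this sketch glosses over, is making each $R_\alpha^{(\eta)}$ genuinely definable inside $M$: the description above references the Cohen real $c$ and the permutable family $\cA_0$, neither of which is \emph{a priori} available in $M$. What saves the argument is that the Bristol construction produces a canonical amorphous decomposition $R$ of the reals of $M_1$, and analogously amorphous decoding sets $R^\alpha$ inside $V_{\omega+\alpha+1}^{M_{\alpha+1}}$ at each later stage; these are recognisable from the $V^M$-hierarchy via the derived groups and filters, with the Bristol sequence (itself an element of $W$) available as a parameter in the definition should it be needed.
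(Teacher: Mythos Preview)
Your definition $R_\alpha^{(\eta)}=R_\alpha\cap V_\eta$ does not do what you want. Each $R_\alpha$ is $V_{\omega+1}^{W[c\cap A_\alpha]}$, a set of reals; once $\eta\geq\omega+1$ the intersection with $V_\eta$ is just $R_\alpha$ again. So $\bigcap R^{(\eta)}$ is at best $\RR^W$, never the collection of $W$'s sets of ordinals of rank ${<}\eta$. Your forward inclusion already fails: a set $X\subseteq\omega_1$ in $W$ has a decent almost $A_\alpha$-name, but that puts $X$ in $W[c\cap A_\alpha]$, not in $R_\alpha$, which only collects subsets of $\omega$. If you instead intend $R_\alpha^{(\eta)}$ to be the sets of ordinals of rank ${<}\eta$ in $W[c\cap A_\alpha]$, you run straight into the obstacle you yourself flag: from an element of the amorphous set $R$ there is no evident way, inside $M$, to recover the full model $W[c\cap A_\alpha]$ without already knowing $W$. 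Forming $L(R_\alpha)$ gives something containing $c\cap A_\alpha$ up to finite modification, but need not contain $W$ when $W\neq L$. The Bristol sequence being available as a parameter does not help here, since the missing ingredient is $c$, not the sequence.

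The paper's route is entirely different and avoids this circularity. It does not try to isolate $\power(\Ord)^W$ directly. Instead it observes that $L(V_{\omega+1}^M)$ already contains $\power(\Ord)^M$, hence contains $W$ (since $W\models\ZFC$), so $L(V_{\omega+1}^M)=W(V_{\omega+1}^M)$. By Usuba's theorem this is a symmetric extension of $W$, and Usuba's work gives that the ground model $W$ is definable there. Since $V_{\omega+1}^M$ is definable in $M$, so is $W$. The point is that ground-model definability inside a model of the form $W(x)$ is a solved problem; one appeals to it rather than reconstructing $W$ by hand.
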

\begin{proof}
Note that $N=L(\power(\Ord))^M$ contains $W$, since it contains $\power(\Ord)^M$ and $W\models\ZFC$. It is therefore equal to $W(V_{\omega+1}^M)$, since by \autoref{prop:0-sets} every set of ordinals is either in $W$ or in $W[r]$ for some real. That is to say, $N$ is a symmetric extension of $W$. By Usuba's result \cite[Corollary~5.6]{Usuba:2019}, $W$ is definable in its symmetric extensions, so $W$ is definable in $N$, and therefore in $M$.
\end{proof}
\begin{corollary}
If $\beta>\alpha$, then $M_\beta\models\lnot\KWP_\alpha$. In particular, $M\models\lnot\KWP$.
\end{corollary}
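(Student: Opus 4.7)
The plan is to combine the preceding proposition with the Generalized Balcar--Vop\v{e}nka--Monro Theorem (\autoref{thm:gen-bvm}). The preceding proposition says that every $\alpha$-set of $M$ already lies in $M_{\alpha+1}$; since $M_{\alpha+1}\subseteq M_\beta\subseteq M$ for every $\beta\geq\alpha+1$, the three classes of $\alpha$-sets of $M_{\alpha+1}$, $M_\beta$, and $M$ all coincide.

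Suppose, toward a contradiction, that $M_\beta\models\KWP_\alpha$ for some $\beta>\alpha$. Then \autoref{thm:gen-bvm}, applied to the pair $(M_\beta,M_{\alpha+1})$ of $\ZF$-models with identical $\alpha$-sets and the former witnessing $\KWP_\alpha$, forces $M_\beta=M_{\alpha+1}$. On the other hand, iterating \autoref{prop:non-constructible} (or, more concretely, observing that $\varrho_{\alpha+1}\in M_{\alpha+2}\setminus M_{\alpha+1}$, which is the entire point of the decoding apparatus of \autoref{sect:decoding}) yields a strict chain $M_{\alpha+1}\subsetneq M_{\alpha+2}\subseteq M_\beta$ whenever $\beta\geq\alpha+2$, contradicting $M_\beta=M_{\alpha+1}$. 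The ``in particular'' clause follows by running the same argument with $M$ in place of $M_\beta$: a putative $\KWP_\alpha$ in $M$ would give $M=M_{\alpha+1}$, which is absurd since $M_{\alpha+2}\subseteq M$.

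The only mildly delicate point is the boundary case $\beta=\alpha+1$, where \autoref{thm:gen-bvm} is vacuous. Here one must argue directly that the amorphous-like set produced at stage $\alpha$ --- the natural generalisation of the set $R$ from \autoref{prop:R-amorphous} --- fails to be equipotent in $M_{\alpha+1}$ with any $\alpha$-set, by a homogeneity/chain-condition argument modelled on \autoref{thm:example-ccc} and \autoref{prop:R-amorphous}. This is where I expect to spend the most care, since the corresponding combinatorics at limit stages (permutable scales rather than permutable families) demand a uniform template; however, the pattern is already in place at the first step and should generalise mechanically across all successor stages.
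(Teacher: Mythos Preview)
Your main argument is exactly the paper's: the preceding proposition gives that $M_{\alpha+1}$, $M_\beta$, and $M$ all share the same $\alpha$-sets, and then \autoref{thm:gen-bvm} does the work. However, you have manufactured a phantom boundary case at $\beta=\alpha+1$ and propose to resolve it with a direct combinatorial argument about amorphous-like sets. This is unnecessary.

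The issue is only that you applied \autoref{thm:gen-bvm} to the pair $(M_\beta,M_{\alpha+1})$, which indeed yields nothing when $\beta=\alpha+1$. But nothing forces this choice of second coordinate. The paper simply observes that \emph{all three} of $M_{\alpha+1}$, $M_\beta$, $M$ have the same $\alpha$-sets, so if $M_\beta\models\KWP_\alpha$ then $M_\beta=M$ as well; since $M_{\beta+1}\subsetneq M$ strictly extends $M_\beta$, this is already a contradiction, uniformly in $\beta>\alpha$. Equivalently, you could apply \autoref{thm:gen-bvm} to the pair $(M_\beta,M_{\beta+1})$: these have the same $\alpha$-sets (both sandwiched between $M_{\alpha+1}$ and $M$), and $M_\beta\models\KWP_\alpha$ would force $M_\beta=M_{\beta+1}$, contradicting $\varrho_\beta\in M_{\beta+1}\setminus M_\beta$. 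Either way, no separate treatment of $\beta=\alpha+1$ is needed, and the sketched homogeneity argument---while plausible---can be dropped entirely.
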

\begin{proof}
$M,M_\beta$ and $M_{\alpha+1}$ have the same $\alpha$-sets. If one of them would satisfy $\KWP_\alpha$, by \nameref{thm:gen-bvm}, $M=M_\beta=M_{\alpha+1}$.
\end{proof}
\begin{corollary}
$M\models\lnot\SVC$, and consequently $M\models\forall x(V\neq L(x))$, as well as ``the axiom of choice is not forceable (by a set forcing)''.\qed
\end{corollary}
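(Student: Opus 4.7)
The plan is to chain three short reductions driven entirely by the preceding corollary $M\models\lnot\KWP$. The one non-trivial ingredient is the observation that every set in $M$ is equipotent with an $\alpha$-set for some ordinal $\alpha$. To see this, I would define by transfinite recursion inside $M$ a coherent family of bijections $\sigma_\gamma\colon V_{\omega+\gamma}^M\to\power^\gamma(\omega)^M$: start with a canonical bijection $\sigma_0\colon V_\omega\to\omega$, let $\sigma_{\gamma+1}$ be the image map $B\mapsto\sigma_\gamma[B]$ on $\power(V_{\omega+\gamma})$, and take unions at limits (with the convention $\power^\gamma(\omega)=\bigcup_{\beta<\gamma}\power^\beta(\omega)$ at limits). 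Since every $A\in M$ satisfies $A\subseteq V_{\omega+\delta}^M$ for some $\delta$, $A$ is equipotent with a subset of $\power^\delta(\omega)$, hence with a $\delta$-set.

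Now suppose for contradiction that $M\models\SVC(A)$. By the observation above, $A$ is equipotent with a $\delta$-set for some $\delta$, so the $\SVC$--$\KWP$ proposition in Section~\ref{sect:kinna-wagner} yields $\KWP_{\delta+1}$ in $M$, contradicting the previous corollary. This proves $M\models\lnot\SVC$.

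From $\lnot\SVC$ the remaining two clauses are one-line consequences. If $M=L(x)$ held in $M$ for some $x$, then $M$ would satisfy $\SVC$ — this is a standard fact, since every set in $L(x)$ is a surjective image of some $\eta\times A$, where $A$ is a set simply built from $\tcl(\{x\})$ (e.g.\ $\tcl(\{x\})^{<\omega}$) — contradicting $\lnot\SVC$. And Blass's theorem cited in Section~\ref{sect:kinna-wagner} states that $\SVC$ is equivalent to the forceability of $\AC$ by a set forcing, so $M\models\lnot\SVC$ immediately gives that no set forcing over $M$ can restore the axiom of choice. The only spot where anything substantive is verified rather than quoted is the observation that $M$'s von Neumann hierarchy is exhausted by iterated powersets of $\omega$; everything else is direct application of results already in hand.
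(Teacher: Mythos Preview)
Your overall strategy matches the paper's intended argument (the paper simply writes \qed): derive $\lnot\SVC$ from $\lnot\KWP$ via the proposition that $\SVC(A)$ together with ``$A$ is equipotent with an $\alpha$-set'' implies $\KWP_{\alpha+1}$, then read off the remaining two clauses from Blass's equivalence and the standard fact that $L(x)\models\SVC$.

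There is, however, a genuine gap in your proof of the ``observation''. The maps $\sigma_\gamma$ as you define them are \emph{not} coherent: for $x\in V_{\omega+\gamma}$ one has $\sigma_{\gamma+1}(x)=\sigma_\gamma[x]=\{\sigma_\gamma(y):y\in x\}$, which is generally different from $\sigma_\gamma(x)$. This is already visible at $\gamma=0$, where $\sigma_0(x)\in\omega$ but $\sigma_1(x)\subseteq\omega$. So ``take unions at limits'' does not produce a well-defined function, and your recursion breaks down at the first limit stage. Fortunately the observation itself is more trivially true than you suggest, and no recursion through bijections is needed: a direct induction shows $\power^\gamma(0)=V_\gamma$ for every ordinal $\gamma$ (with the limit convention $\power^\lambda(0)=\bigcup_{\beta<\lambda}\power^\beta(0)$), so every set $A$ is \emph{literally} a $\rank(A)$-set, witnessed by $\eta=0$. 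With this replacement the rest of your argument goes through unchanged.
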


It follows from this that at least for a proper class $A\subseteq\Ord$, if $\alpha<\beta$ are both in $A$, then $\KWP_\beta\nto\KWP_\alpha$. But we want to understand the gradation, or rather the degradation, of $\KWP$ through the construction.
\begin{proposition}
$M_1\models\lnot\KWP_1\land\lnot\BPI$
\end{proposition}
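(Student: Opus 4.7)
The strategy is to exhibit a single set in $M_1$ that cannot be linearly ordered; once we have that, both conclusions follow at once. Indeed $\BPI$ implies the Ordering Principle, and $\KWP_1$ implies every set is equipotent with some $Y\subseteq\power(\eta)$, which is linearly ordered by $A<_{\mathrm{lex}} B\iff \min(A\triangle B)\in B$. The natural candidate is the strongly $\aleph_1$-amorphous set $R=\{R_\alpha\mid\alpha<\omega_1\}$ from \autoref{prop:R-amorphous}.

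To show $R$ cannot be linearly ordered in $M_1$, the plan is to run a homogeneity argument directly modelled on the proof of \autoref{prop:R-amorphous}. Suppose for contradiction that $\dot{<}\in\HS$ is a name for a linear order on $\dot R$, and let $\cB=\{B_\xi\mid\xi\in I\}$ be a disjoint approximation with $\fix(\cB)\subseteq\sym(\dot{<})$. One picks distinct $\alpha,\beta\notin I$, so that $A_\alpha\cap\bigcup\cB$ and $A_\beta\cap\bigcup\cB$ are finite; since $\1\forces\dot R_\alpha\neq \dot R_\beta$ (the pieces $c\cap A_\alpha$ and $c\cap A_\beta$ are mutually generic and define distinct sets of reals), some $p$ decides the order, say $p\forces\dot R_\alpha\mathrel{\dot{<}}\dot R_\beta$.

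The key step will be to produce $\pi\in\fix(\cB)$ that fixes $p$ and satisfies $\iota(\pi)=(\alpha\ \beta)$. One takes $\pi$ to be the order isomorphism between suitable $B'_\alpha\subseteq A_\alpha\setminus(\bigcup\cB\cup\dom p)$ and $B'_\beta\subseteq A_\beta\setminus(\bigcup\cB\cup\dom p)$, chosen so that $B'_\alpha=^*A_\alpha$ and $B'_\beta=^*A_\beta$ (possible since we excluded only finitely many points from each), extended by the identity elsewhere. Applying the Symmetry Lemma to $\dot{<}$ then yields $p=\pi p\forces \pi\dot R_\alpha\mathrel{\dot{<}}\pi\dot R_\beta$, i.e.\ $p\forces\dot R_\beta\mathrel{\dot{<}}\dot R_\alpha$, contradicting antisymmetry.

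The only real subtlety is the bookkeeping needed to produce a single $\pi$ that simultaneously lies in $\fix(\cB)$, fixes $p$, and implements $(\alpha\ \beta)$; this is unproblematic because $\dom p$ is finite while $A_\alpha, A_\beta$ are cofinal in $\omega$, leaving ample room to choose the implementing refinement to dodge both $\bigcup\cB$ and $\dom p$. With $R$ shown to be non-linearly-orderable, the two applications above give $M_1\models\lnot\BPI$ and $M_1\models\lnot\KWP_1$ simultaneously.
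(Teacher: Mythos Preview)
Your proposal is correct and follows essentially the same approach as the paper: show that the set $R$ cannot be linearly ordered in $M_1$ by the standard homogeneity argument (pick $\alpha,\beta$ outside the support, extend to a condition deciding the order, and apply a $2$-cycle in $\fix(\cB)$ that fixes the condition), then deduce both $\lnot\BPI$ and $\lnot\KWP_1$ from the existence of a non-linearly-orderable set. Your bookkeeping differs only cosmetically---you build $\pi$ to avoid $\dom p$ directly, whereas the paper enlarges $\cB$ to absorb the finitely many extra points of the extending condition---and you spell out the reductions from $\BPI$ and $\KWP_1$ to the Ordering Principle more explicitly than the paper does.
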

\begin{proof}
  We proved that $R$, the set of $R_\xi=V_{\omega+1}^{L[c\cap A_\xi^0]}$ for $\xi<\omega_1$, is $\aleph_1$-amorphous in $M_1$ in \autoref{prop:R-amorphous}, and the same argument shows that it cannot be linearly ordered. In particular, it cannot be equipotent to any $1$-set and it also witnesses that $\BPI$ fails.

  Briefly, the argument starts by taking a name $\dot\prec\in\IS_1$ and some $p\forces^\IS``\tup{\dot R,\dot\prec}^\bullet$ is a linearly ordered set''. Let $\cB$ be a disjoint approximation such that $\fix(\cB)\subseteq\sym(\dot R)$ and $\dom p\subseteq\bigcup\cB$. We pick $\alpha,\beta\notin\dom\cB$ and distinct, and we let $q\leq p$ decide, without loss of generality, $\dot R_\alpha\mathrel{\dot\prec}\dot R_\beta$. But now we can find $\pi\in\fix(\cB)$ such that $\iota(\pi)=(\alpha\ \beta)$ and $\pi q=q$, which is impossible if $\dot\prec$ was a name for a linear order.
\end{proof}
This is a remarkable point, as the elements of $R$ themselves can be well-ordered separately. So you may think we can replace them by $0$-sets, but we cannot do that uniformly, and this forces us to treat them as $1$-sets instead, and $R$ itself as a $2$-set.
\begin{proposition}\label{prop:m2-bpi}
$M_2\models\lnot\BPI$.
\end{proposition}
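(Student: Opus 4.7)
The plan is to mimic the proof of the preceding proposition one step higher in the construction. Since the set $R$ may a priori become linearly orderable once we pass to $M_2$, we need a fresh witness living naturally at the second stage. Consider the stage-$2$ analogue
\[
S = \bigl\{ V_{\omega+2}^{M_1[\varrho_1 \restriction A^1_\xi]} \,\big|\, \xi < \omega_2 \bigr\} \in M_2,
\]
that is, the range of the stage-$2$ generic sequence $\varrho_2$. Exactly as in \autoref{prop:re-order}, each element has a canonical $\IS_2$-name $\dot S_\xi$ whose stage-$1$ stabiliser contains $\fix(\{A^1_\xi\})$, so that if $\sigma$ is a permutation of $\omega_1$ implementing $\Sigma \in \sym(\omega_2)$, the lifted automorphism $\gaut{\tup{\id,\check\sigma}} \in \cG_2$ sends $\dot S_\xi$ to $\dot S_{\Sigma(\xi)}$.

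Suppose $\dot\prec \in \IS_2$ and $p \in \PP_2$ force that $\dot\prec$ linearly orders $\dot S$, and let $\vec H = \tup{H_0, H_1}$ be an excellent support for $\dot\prec$ with $H_1 = \fix(\cB_1)$ for a disjoint approximation $\cB_1 \subseteq \cA_1$. Extend $p$ to $q$ deciding, without loss of generality, $\dot S_\alpha \mathrel{\dot\prec} \dot S_\beta$ for some $\alpha,\beta$; by enlarging $\cB_1$ to absorb the stage-$1$ support of $q$ and choosing $\alpha \neq \beta$ with $A^1_\alpha, A^1_\beta$ almost disjoint from $\bigcup\cB_1$, we find $\sigma \in \fix(\cB_1)$ implementing the $2$-cycle $(\alpha\ \beta) \in \sym(\omega_2)$ while fixing $q$. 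The lifted automorphism $\gaut{\tup{\id,\check\sigma}}$ then fixes both $\dot\prec$ and $q$ while swapping $\dot S_\alpha$ and $\dot S_\beta$, so applying it to $q \forces^\IS \dot S_\alpha \mathrel{\dot\prec} \dot S_\beta$ yields $q \forces^\IS \dot S_\beta \mathrel{\dot\prec} \dot S_\alpha$, contradicting antisymmetry. Hence no linear order of $S$ exists in $M_2$, and $\BPI$ fails there.

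The main technical hurdle is the iteration-level bookkeeping: one must verify that $\gaut{\tup{\id,\check\sigma}}$ does belong to $\cG_2$, that its action on $\dot S_\xi$ is exactly index permutation via $\Sigma$, and that the fixing of $q$ by $\sigma$ truly extends to the iterated automorphism (which in turn requires $q$'s stage-$1$ support to lie in $\bigcup\cB_1$ and the stage-$0$ component to be untouched). All of this is routine from \autoref{def:generic-aut} and the arguments already rehearsed in \autoref{prop:re-order} and \autoref{prop:R-amorphous}, but now carried out at the second iterand rather than the Cohen forcing; the rest of the proof then mirrors the preceding proposition essentially verbatim.
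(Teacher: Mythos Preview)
Your argument is correct, but it takes a genuinely different route from the paper's proof. The paper does \emph{not} pass to the stage-$2$ set $S$; instead it shows that the \emph{same} set $R=\{R_\xi\mid\xi<\omega_1\}$ that witnessed $\lnot\BPI$ in $M_1$ continues to resist linear ordering in $M_2$. This forces the paper to work with a stage-$0$ automorphism $\pi\in\sG_0$ implementing $(\alpha\ \beta)$ on $\omega_1$, which unavoidably disturbs the stage-$1$ condition $\dot q'$ (since $\dot R_\alpha,\dot R_\beta$ will typically occur in its range); a second automorphism $\sigma\in\sG_1$ is then needed to undo that damage, and the delicate point is arranging in advance that $\dot R_\alpha,\dot R_\beta$ occur in $\dot q'$ at coordinates outside $\bigcup\cB_1$ so that such a $\sigma$ can be found in $\fix(\cB_1)$.

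Your approach sidesteps this entirely: by moving the witness up to $S$, the swap is carried out purely by a stage-$1$ automorphism $\gaut{\tup{\id,\check\sigma}}$, and the only constraint is that $\sigma$ fix the (bounded, hence absorbable) domain of $\dot q'$. This is cleaner for the bare statement $M_2\models\lnot\BPI$. The trade-off is visible in the next theorem: the paper's choice of $R$ pays off immediately in \autoref{thm:no-bpi}, because $R$ and any putative linear order of it are $2$-sets, so showing non-orderability in $M_3$ suffices for $M$ itself. Your $S$ is a $3$-set, so the analogous passage to $M$ would require pushing the argument one step further to $M_4$. (A small expository point: your order of operations---extend to $q$, then choose $\alpha,\beta$---reads backwards; you want to fix $\alpha,\beta\notin\dom\cB_1$ first, then extend, then absorb $\dom\dot q'$ into $\bigcup\cB_1$ by a bounded modification, which does not change $\dom\cB_1$ and hence leaves $\alpha,\beta$ eligible.)
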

\begin{proof}
  We show that $R$ still cannot be linearly ordered when passing to $M_2$. Of course the forcing that led us there, $\QQ_1$, linearly orders (and in fact well-orders) $R$. But it is easy to see that this well-order is promptly discarded, and instead we only remember bits and pieces of it in the form of $\varrho_1\restriction A^1_\xi$. To show that there is no linear ordering in $M_2$ we need to use the full power of the symmetric iteration. We are going to start with a rather naive attempt, which may not work, but we can identify the problem and circumvent it.

  Suppose that $\dot\prec\in\IS_2$ is such that $\tup{p,\dot q}\forces^\IS_2``\dot\prec$ linearly orders $\dot R$''. Let $\cB_0$ and $\cB_1$ be disjoint approximations such that $\tup{\fix(\cB_0),\fix(\cB_1)}$ is a support of $\dot\prec$. Let $\alpha,\beta<\omega_1$ be such that $\alpha,\beta\notin\dom\cB_0\cup\bigcup\cB_1$ and moreover $\dot R_\alpha,\dot R_\beta$ are not mentioned in $\dot q$.

  Let $\tup{p',\dot q'}$ be a condition extending $\tup{p,\dot q}$ such that $\tup{p',\dot q'}\forces^\IS_2\dot R_\alpha\mathrel{\dot\prec}\dot R_\beta$. We would like to apply upwards homogeneity and consider $\pi\in\fix(\cB_0)$ which implements $(\alpha\ \beta)$ while also not moving $p'$. But we have to contend with the fact that $\pi\dot q'$ may have moved. Luckily, we know exactly where it moved to: $\pi$ simply permutes the range of $\dot q$, so if $\dot R_\alpha$ and $\dot R_\beta$ appear in $\dot q'$, which is the likely case, then we simply need to switch them back using some $\sigma\in\fix(\cB_1)$, that is an automorphism of $\dot\QQ_1$, and that will be enough, since automorphisms of $\dot\QQ_1$ do not change $\dot R_\alpha$ and $\dot R_\beta$.

  Alas, we have a problem. For $\sigma$ to exist, we need to make sure that $\dot R_\alpha$ and $\dot R_\beta$ appear in $\dot q'$ in coordinates which are not in $\bigcup\cB_1$, otherwise we cannot move these coordinates at all. So this naive approach cannot work.

  Luckily, we assumed that $\dot R_\alpha$ and $\dot R_\beta$ are not mentioned in our original $\dot q$. So to find $\dot q'$, first add both of these in coordinates that are not in $\bigcup\cB_1$, and if this was not enough to decide how $\dot\prec$ will order them we can extend further to find $\dot q'$. In other words, we may assume without loss of generality that $\dot q'$ mentions $\dot R_\alpha$ and $\dot R_\beta$ in coordinates which are eligible to be switched from within $\fix(\cB_1)$.

  Therefore, if $\tup{p',\dot q'}\forces^\IS_2\dot R_\alpha\mathrel{\dot\prec}\dot R_\beta$, then also $\tup{p',\dot q'}\forces^\IS_2\dot R_\beta\mathrel{\dot\prec}\dot R_\alpha$. Therefore $\tup{p',\dot q'}$ cannot force $\dot\prec$ to be a linear ordering, which is a contradiction, since it extends a condition which did force just that.
\end{proof}
One may think that this is enough to prove that there are sets which cannot be linearly ordered in the Bristol model, as we just exhibited that the second symmetric extension will not linearly order $R$ either. Alas, we already concluded that $R$ is a $2$-set, so a linear ordering of $R$ will also be a $2$-set. But we know that $2$-sets are only determined in $M_3$. But luckily, we are not very far behind completing this part of the journey.
\begin{theorem}\label{thm:no-bpi}
$M\models\lnot\BPI$, and in fact there is a set in $M$ which cannot be linearly ordered.
\end{theorem}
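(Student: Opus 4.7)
The witnessing set is $R=\{R_\xi\mid\xi<\omega_1\}$ from the first step of the construction. Since each $R_\xi\subseteq V_\omega$ is a $1$-set, $R$ itself is a $2$-set, and any linear order on $R$ (coded via Kuratowski pairs) is a $4$-set; by the proposition opening this section any such order already lives in $M_5$. So it suffices to prove by induction on $\alpha\leq 5$ that $M_\alpha\models$ ``$R$ cannot be linearly ordered''. The base cases $\alpha=1,2$ are \autoref{prop:R-amorphous} and \autoref{prop:m2-bpi}. Note that since $5<\omega$, every coordinate $\beta\leq\alpha$ is $0$ or a successor, so each non-trivial $H_\beta$ appearing in an excellent support has the shape $\fix(\cB_\beta)$ and no permutable-scale bookkeeping intervenes.

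For the successor step $\alpha\to\alpha+1$, let $\dot\prec\in\IS_{\alpha+1}$, let $p\in\PP_{\alpha+1}$, and suppose $p\forces^\IS_{\alpha+1}$ ``$\dot\prec$ linearly orders $\dot R$''. Let $\vec H$ be an excellent support for $\dot\prec$, with only finitely many non-trivial coordinates. Choose $\gamma,\delta<\omega_1$ large enough that $\gamma,\delta\notin\dom\cB_0$ and neither $\dot R_\gamma$ nor $\dot R_\delta$ is mentioned anywhere in $p$. Following the template of \autoref{prop:m2-bpi}, first extend $p$ to some $p'$ so that at each non-trivial $\dot q_\beta$ (for $\beta>0$) the names $\dot R_\gamma,\dot R_\delta$ appear at coordinates lying outside $\bigcup\cB_\beta$; this is a routine density argument using finiteness of the non-trivial support. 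Then extend further to some $p''$ that decides, without loss of generality, $\dot R_\gamma\mathrel{\dot\prec}\dot R_\delta$. Now assemble $\vec\pi\in\vec H$ with $\pi_0\in\fix(\cB_0)$ implementing $(\gamma\ \delta)$ and each non-trivial $\pi_\beta$ (for $\beta>0$) chosen within $\fix(\cB_\beta)$ to reverse the permutation of $\dot R_\gamma,\dot R_\delta$ induced at coordinate $\beta$ by the propagation of $\pi_0$. By construction $\gaut{\vec\pi}p''=p''$ and $\gaut{\vec\pi}\dot\prec=\dot\prec$, so $p''$ also forces $\dot R_\delta\mathrel{\dot\prec}\dot R_\gamma$, contradicting antisymmetry.

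The main obstacle is the first extension, turning $p$ into $p'$: one must, at each non-trivial coordinate $\beta>0$, introduce $\dot R_\gamma$ and $\dot R_\delta$ into the range of $\dot q_\beta$ at ``free'' indices outside $\bigcup\cB_\beta$, so that the subsequent correction $\pi_\beta$ can act on them from inside $\fix(\cB_\beta)$. This is precisely the point that already required care in \autoref{prop:m2-bpi}; at higher $\alpha$ the same manoeuvre is applied independently at each of the finitely many relevant coordinates. Once this preparation is in place, the assembly of $\vec\pi$ proceeds coordinatewise using the upwards homogeneity of each symmetric iterand, and the contradiction closes as in the $\alpha=2$ case. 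Since the whole induction is finite and purely over successor-style steps, no limit-step or scale argument is needed, which keeps the bookkeeping tame.
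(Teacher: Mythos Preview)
Your reduction to a finite stage is sound, but the coding is wasteful: the paper observes (via the G\"odel-style pairing used in the proof of \autoref{thm:gen-bvm}) that a relation on a $2$-set is again a $2$-set, so $M_3$ already decides whether $R$ can be linearly ordered. Going to $M_5$ via Kuratowski pairs is not wrong, but it forces you into an ``inductive'' bookkeeping at coordinates $\beta=2,3,4$ that you then mishandle.

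The genuine gap is in your successor step for $\beta>1$. Your instruction to ``extend $p$ so that at each non-trivial $\dot q_\beta$ the names $\dot R_\gamma,\dot R_\delta$ appear at coordinates outside $\bigcup\cB_\beta$'' is not meaningful once $\beta\geq 2$: a condition in $\dot\QQ_\beta$ is a partial enumeration of the objects $\dot R^{(\beta)}_\xi=V_{\omega+\beta}^{M_{\beta-1}[\ldots]}$ indexed by $\xi<\omega_\beta$, not of the first-level $\dot R_\zeta$'s indexed by $\zeta<\omega_1$. There is nothing to ``insert'' and no permutation of $\dot R_\gamma,\dot R_\delta$ to reverse at those coordinates. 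Consequently the $\pi_\beta$ you propose for $\beta>1$ are not well-defined, and the claim $\gaut{\vec\pi}p''=p''$ is unsupported.

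What you are missing is the structural fact the paper invokes (stated earlier in the text): automorphisms from coordinates strictly below $\alpha$ act as the identity on the successor iterand $\dot\QQ_{\alpha+1}$. Thus $\pi_0\in\sG_0$ does nothing to the $\dot\QQ_2$-coordinate $\dot r'$. The only non-trivial propagation is to $\dot q_1'$, and the correcting $\sigma\in\sG_1$ can be chosen as a $2$-cycle on $\omega_1$; a $2$-cycle satisfies $\iota(\sigma)=\id$ on $\omega_2$, so it too fixes $\dot r'$. Hence no correction is ever needed beyond coordinate $1$, and the $M_2$ argument goes through verbatim in $M_3$. Your induction template tries to manufacture corrections at higher coordinates where none exist, and in doing so describes objects that are not there.
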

\begin{proof}
  Suppose that $\tup{p,\dot q,\dot r}\in\PP_3$ is a condition that for some $\dot\prec\in\IS_3$ forces that $\dot\prec$ is a linear order of $\dot R$. We can actually run the proof of \autoref{prop:m2-bpi} again. First of all, $\pi$ will not affect the condition extending $\dot r$ at all, but more importantly, $\sigma$ can be chosen as a permutation moving only two points which means it implements the identity. So it also will not modify the condition extending $\dot r$.

  And so as long as we were careful to choose the extension $\dot q'$ in a way that allows $\sigma$ to be taken from $\fix(\cB_1)$, the argument is not affected. Therefore we showed that $M_3\models``R$ cannot be linearly ordered'', and therefore $M$ does as well.
\end{proof}
\section{Open questions related to the Bristol model}\label{sect:questions}
There is still so much to learn about the Bristol model, both in the specific context of $L$, as well as many natural questions that come up from generalisations and details in the proof. We cannot possibly include all of these, but we will give four families of questions which are interconnected, but also seem to have independent interest.
\subsection{The Bristol models in the multiverse}
\begin{question}
  Is there a condition characterising the equivalence classes on Bristol sequences (definable or otherwise) based on the Bristol model they generate using a fixed Cohen real?
\end{question}
\begin{question}
  Is the theory of any two Bristol models the same? Does the theory depend on the sequence or its properties?
\end{question}
\begin{question}
  Are there any non-trivial grounds of any Bristol model?
\end{question}
\begin{question}
  Is there a Bristol model which is definable in its generic extensions, or maybe is there one that is not definable in some of its generic extensions?
\end{question}
\begin{question}
  Is there a generic extension of a Bristol model which itself is a Bristol model?
\end{question}
\begin{question}
  Is there a maximal Bristol model, namely, is there a Bristol model $M\subseteq L[c]$ such that for any $x\in L[c]\setminus M$, $M(x)=L[c]$?
\end{question}
\begin{question}
  Is it true in general that for $x\in L[c]$, either $L(x)=L[c]$ or there is a Bristol model $M$ such that $x\in M$?
\end{question}
\subsection{Large cardinals in the Bristol model}
\begin{question}
  We saw that measurable cardinals remain measurable. Do they remain critical cardinals in the sense of \cite{HayutKaragila:Critical}? What about weakly critical cardinals?
\end{question}
\begin{question}
  Principles like $\square^*_\lambda$ are considered to witness failure of compactness. Suppose that $\lambda$ is singular and no permutable scale exists on $\prod\SC(\lambda)$. Can this compactness be harnessed to restart the Bristol model construction? (Note that a positive answer would indicate that Woodin's Axiom of Choice Conjecture is possibly false, which may imply also the eventual failure of the HOD Conjecture. As such the answer to this question is most likely negative, and a positive answer would be extremely hard to prove.)
\end{question}
\begin{question}
  Suppose that elementary embeddings can be lifted and Woodin cardinals are preserved. Starting from strong enough hypotheses, can we construct Bristol model-like objects that satisfy $\AD$?
\end{question}
\subsection{Other type of Bristol models}
\begin{question}
  Can we start the construction of the Bristol model with a different type of real? Clearly not every real is useful, minimal reals do not have intermediate models, for example. But what about reals that admit sufficiently many automorphisms and intermediate models such as random reals? What about ``Cohen + condition'' type of reals (Hechler, Mathias, etc.)? Will this also impact the type of forcing we need to do in the following steps (namely, will that force us to use something which is not isomorphic to $\Add(\omega_\alpha,1)^L$ in successor steps)?
\end{question}
\begin{question}
  Can we start the construction with a Prikry-like forcing instead of a Cohen forcing?
\end{question}
\begin{question}
  While there is no good definition for iteration of symmetric extensions with countable support, it is imaginable that for productive iterations such as the one used in the Bristol model this is doable by hand. What would this be? Can we have an $\omega_1$-Bristol model starting with an $L$-generic sequence for $\Add(\omega_1,1)$ for example?
\end{question}
\begin{question}
  Can we find similar constructions over arbitrary models of $\ZF$, or at least some reasonable combinatorial property which involves the $V_\alpha$ hierarchy, rather than the ordinals?
\end{question}
\subsection{Weak choice principles}
\begin{question}
  Does $\DC$ hold in the Bristol model?\footnote{In an upcoming work with Jonathan Schilhan we answer this question to the positive. In particular, countable choice holds in the Bristol model. We do not know about $\AC_\WO$.}
\end{question}
\begin{question}
  Are there any free ultrafilters on $\omega$ in the Bristol model?
\end{question}
\begin{question}
  Does $M_\alpha\models\KWP_{\alpha*}$?
\end{question}
\begin{question}
  Are there any choice principles that can be forced over the Bristol model?
\end{question}
\begin{question}
  Is countable choice true in the Bristol model? If so, is $\AC_\WO$, the axiom of choice for families that can be well-ordered, true? (Note that this will provide a positive answer about $\DC$, as well as the lifting of elementary embeddings for measurable cardinals.)
\end{question}
\begin{question}
  Say that $A$ is \textit{$x$-amorphous} if it cannot be well-ordered, and it cannot be written as the union of two sets that are not well-orderable. That is, for some $\kappa$, $A$ is $\kappa$-amorphous. Are there any $x$-amorphous sets in the Bristol model?
\end{question}
\bibliographystyle{amsplain}
\providecommand{\bysame}{\leavevmode\hbox to3em{\hrulefill}\thinspace}
\providecommand{\MR}{\relax\ifhmode\unskip\space\fi MR }
\providecommand{\MRhref}[2]{%
  \href{http://www.ams.org/mathscinet-getitem?mr=#1}{#2}
}
\providecommand{\href}[2]{#2}

\end{document}